\begin{document}

\def\HH{{\mathcal{H}}}
\def\orb{{\operatorname{orb}}}
\def\diam{{\operatorname{diam}}}
\def\II{{\mathfrak{I}}}
\def\PO{{\operatorname{PO}}}
\def\Cl{{\operatorname{Cl}}}
\def\Max{{\operatorname{-Max}}}
\def\XX{{\bf{X}}}
\def\YY{{\bf{Y}}}
\def\BBB{{\mathcal B}}
\def\inv{{\operatorname{inv}}}
\def\emph{\it}
\def\Int{{\operatorname{Int}}}
\def\Spec{\operatorname{Spec}}
\def\Bin{{\operatorname{B}}}
\def\n{\operatorname{b}}
\def\N{{\operatorname{GB}}}
\def\BC{{\operatorname{BC}}}
\def\dlog{\frac{d \log}{dT}}
\def\Sym{\operatorname{Sym}}
\def\Nr{\operatorname{Nr}}
\def\lbrack{{\{}}
\def\rbrack{{\}}}
\def\burnside{\operatorname{B}}
\def\Sym{\operatorname{Sym}}
\def\Hom{\operatorname{Hom}}
\def\Inj{\operatorname{Inj}}
\def\Aut{{\operatorname{Aut}}}
\def\Mor{{\operatorname{Mor}}}
\def\Map{{\operatorname{Map}}}
\def\CMap{{\operatorname{CMap}}}
\def\GMaps{G{\operatorname{-Maps}}}
\def\Fix{{\operatorname{Fix}}}
\def\res{{\operatorname{res}}}
\def\ind{{\operatorname{ind}}}
\def\inc{{\operatorname{inc}}}
\def\coind{{\operatorname{cnd}}}
\def\Equiv{{\mathcal{E}}}
\def\W{\operatorname{W}}
\def\F{\operatorname{F}}
\def\witt{\operatorname{gh}}
\def\ngh{\operatorname{ngh}}
\def\Fm{{\operatorname{Fm}}}
\def\bij{{\iota}}
\def\mk{{\operatorname{mk}}}
\def\km{{\operatorname{mk}}}
\def\VV{{\bf{V}}}
\def\ff{{\bf{f}}}
\def\ZZ{{\mathbb Z}}
\def\Zhat{{\widehat{\mathbb Z}}}
\def\CC{{\mathbb C}}
\def\PP{{\mathbf p}}
\def\DD{{\mathbb D}}
\def\EE{{\mathbb E}}
\def\MM{{\mathbb M}}
\def\JJ{{\mathbb J}}
\def\NN{{\mathbb N}}
\def\RR{{\mathbb R}}
\def\QQ{{\mathbb Q}}
\def\FF{{\mathbb F}}
\def\mm{{\mathfrak m}}
\def\nn{{\mathfrak n}}
\def\jj{{\mathfrak j}}
\def\aaa{{{{\mathfrak a}}}}
\def\bbb{{{{\mathfrak b}}}}
\def\ppp{{{{\mathfrak p}}}}
\def\qqq{{{{\mathfrak q}}}}
\def\PPP{{{{\mathfrak P}}}}
\def\BB{{\mathfrak B}}
\def\jj{{\mathfrak J}}
\def\LL{{\mathfrak L}}
\def\qq{{\mathfrak Q}}
\def\rr{{\mathfrak R}}
\def\cc{{\mathfrak S}}
\def\TT{{\mathcal{T}}}
\def\SS{{\mathcal S}}
\def\UU{{\mathcal U}}
\def\AA{{\mathcal A}}
\def\BB{{\mathcal B}}
\def\Primes{{\mathcal P}}
\def\genS{{\langle S \rangle}}
\def\genT{{\langle T \rangle}}
\def\bT{\mathsf{T}}
\def\bD{\mathsf{D}}
\def\bC{\mathsf{C}}
\def\VV{{\bf V}}
\def\ff{{\bf f}}
\def\uu{{\bf u}}
\def\aa{{\bf{a}}}
\def\bb{{\bf{b}}}
\def\zero{{\bf 0}}
\def\rad{\operatorname{rad}}
\def\End{\operatorname{End}}
\def\id{\operatorname{id}}
\def\mod{\operatorname{mod}}
\def\im{\operatorname{im}\,}
\def\ker{\operatorname{ker}}
\def\coker{\operatorname{coker}}
\def\ord{\operatorname{ord}}
\def\li{\operatorname{li}}
\def\Ei{\operatorname{Ei}}
\def\Ein{\operatorname{Ein}}
\def\Ri{\operatorname{Ri}}
\def\Rie{\operatorname{Rie}}
\def\degl{\operatorname{deglog}}

\newtheorem{theorem}{Theorem}[section]
\newtheorem{proposition}[theorem]{Proposition}
\newtheorem{corollary}[theorem]{Corollary}
\newtheorem{lemma}[theorem]{Lemma}

\theoremstyle{definition}
\newtheorem{definition}[theorem]{Definition}
\newtheorem{remark}[theorem]{Remark}
\newtheorem{conjecture}[theorem]{Conjecture}
\newtheorem{problem}[theorem]{Problem}
\newtheorem{speculation}[theorem]{Speculation}
\newtheorem{example}[theorem]{Example}

 \newenvironment{map}[1]
   {$$#1:\begin{array}{rcl}}
   {\end{array}$$
   \\[-0.5\baselineskip]
 }

 \newenvironment{map*}
   {\[\begin{array}{rcl}}
   {\end{array}\]
   \\[-0.5\baselineskip]
 }

 \newenvironment{nmap*}
   {\begin{eqnarray}\begin{array}{rcl}}
   {\end{array}\end{eqnarray}
   \\[-0.5\baselineskip]
 }

 \newenvironment{nmap}[1]
   {\begin{eqnarray}#1:\begin{array}{rcl}}
   {\end{array}\end{eqnarray}
   \\[-0.5\baselineskip]
 }

\newcommand{\eq}{eq.\@\xspace}
\newcommand{\eqs}{eqs.\@\xspace}
\newcommand{\diagram}{diag.\@\xspace}

\numberwithin{equation}{section}


\title{Harmonic numbers and the prime counting function}

\author{Jesse Elliott}

\maketitle

\begin{abstract}
We provide approximations to the prime counting function by various discretized versions of the logarithmic integral function, expressed solely in terms of the harmonic numbers.  We demonstrate with explicit error bounds that these approximations are at least as good as the logarithmic integral approximation.  As a corollary, we provide some reformulations of the Riemann hypothesis in terms of the prime counting function and the harmonic numbers. \\

\noindent {\bf Keywords:}  prime counting function, harmonic numbers, Riemann hypothesis. \\

\noindent {\bf MSC:}   11N05, 11M26
\end{abstract}

\bigskip

\tableofcontents

\section{Introduction}

This paper concerns the function $\pi: \RR_{>0} \longrightarrow \RR$ that for any $x > 0$ counts the number of primes less than or equal to $x$: $$\pi(x) =   \# \{p \leq x: p \mbox{ is prime}\}, \quad x > 0.$$  The function $\pi(x)$ is known as the {\it prime counting function}. We call the related function $\PP: \RR_{> 0} \longrightarrow \RR$ defined by $$\PP(x) = \frac{\pi(x)}{x}, \quad x > 0,$$ the {\it prime density function}.   The celebrated {\it prime number theorem}, proved independently by de la Vall\'ee Poussin \cite{val1} and Hadamard \cite{had}  in 1896,  states that
\begin{align*}
\pi(x) \sim \frac{x}{\log x} \ (x \to \infty),
\end{align*}
where $\log x$ is the natural logarithm.  
It is known, however, that the {\it logarithmic integral function} $$\li(x) =  \int_0^x \frac{dt}{\log t}, \quad x > 0,$$
(where the integral assumes the Cauchy principal value and $\li(1) = -\infty$), provides a better approximation to $\pi(x)$ than any algebraic function of $\log x$.  The {\it prime number theorem with error term}, proved by de la Vall\'ee Poussin in 1899 \cite{val2}, states that
$$\pi(x)-\li(x) = O \left(x e^{-C\sqrt{\log x}} \right) \ (x \to \infty)$$
for some constant $C  > 0$.  De la Vall\'ee Poussin's result has since been improved to
$$\pi(x)  - \li(x) = O\left(xe^{ - A(\log x)^{3/5}(\log \log x)^{-1/5}}\right) \ (x \to \infty),$$ 
where  $A  = 0.2098$ \cite{ford}, which is the strongest known $O$ bound on $\pi(x)-\li(x)$ to date.

Proofs of the strongest known bounds on the error $\pi(x) - \li(x)$  are based on Riemann's explicit formula for $\pi(x)$ in terms of the zeros of the Riemann zeta function $\zeta(s)$ and advanced methods for verifying zero-free regions of $\zeta(s)$ in the critical strip $0 \leq \operatorname{Re} s \leq 1$.   The celebrated {\it Riemann hypothesis} states that all such zeros lie on the line $\operatorname{Re} s = \frac{1}{2}$.  As is now well known, von Koch proved  in 1901 \cite{koch} that the  Riemann hypothesis is equivalent to 
$$\pi(x)- \li(x) = O(\sqrt{x} \log x) \ (x \to \infty).$$
   It is known, more generally, that if $$\delta = \sup\{\text{Re}(s): s \in \CC, \, \zeta(s) = 0\}$$ denotes the supremum of the real parts of the zeros of $\zeta(s)$, then $\frac{1}{2} \leq \delta \leq 1$, and $\delta$ is the least $\alpha \in \RR$ such that 
\begin{align}\label{ess}
\pi(x)- \li(x) = O(x^\alpha \log x) \ (x \to \infty).
\end{align}
(See, for example, \cite[Theorem 15.2 and Section 13.1.1 Exercise 1]{mont}.)
Moreover, the Riemann hypothesis is equivalent to $\delta = \frac{1}{2}$.

For every positive integer $n$, let $H_n = \sum_{k = 1}^n \frac{1}{k}$ denote the $n$th {\it harmonic number}.  The {\it summatory function} of a function $f(x)$ is the function $\sum_{k = 1}^n f(k)$.   Thus, the function $H_n$ is the summatory  function of  $f(x) = \frac{1}{x}$.   Summatory functions are {\it discrete integrals} in the sense that $\sum_{k = N}^n f(k) = \int_N^n f(x) d \nu(x)$ for all integers $n \geq N$, where $\nu$ is the unique discrete measure with respect to Lebesgue measure that is supported on $\ZZ$ with all weights equal to $1$.  Thus, the $n$th harmonic number $H_n = \int_1^n \frac{1}{x}d\nu(x)$ is a discrete integral of $\frac{1}{x}$ and is in this sense a ``discrete natural logarithm.''   Not unexpectedly, one has  $$H_n \sim \int_{1}^n \frac{dx}{x} = \log n \ (n \to \infty),$$ and, more precisely, the limit
$$\lim_{n \to \infty}\left( H_n -\log n  \right) = \gamma = 0.577215664901\ldots,$$
 known as the {\it Euler--Mascheroni constant}, 
is finite, and represents a precise measure of the  discrepancy between the natural logarithm and the ``discrete natural logarithm.''

Because $H_n \sim \log n \ (n \to \infty)$, the prime number theorem is equivalent to
$$\pi(n) \sim \frac{n}{H_n} \ (n \to \infty),$$ where
$\frac{n}{H_n}$ is also the harmonic mean of the integers $1,2,3,\ldots,n$.  This simple observation, alongside an inequality equivalent to the Riemann hypothesis involving the sum of divisors function and the harmonic numbers discovered by J.\ Lagarias, described below, led us to wonder if the harmonic numbers could be used to provide approximations to $\pi(n)$ that are better than $\frac{n}{H_n}$---or, ideally, even as good as $\li(n)$.  The former problem in part inspired the paper \cite{ell0}, where we provide various asymptotic expansions of the prime counting function, including several involving the harmonic numbers, such as the (divergent) asymptotic continued fraction expansion
$$\PP(e^\gamma n) \sim \, \cfrac{\sfrac{1}{H_n}}{1 - \cfrac{\sfrac{1}{H_n}}{1-\cfrac{\sfrac{1}{H_n}}{1-\cfrac{\sfrac{2}{H_n}}{1-\cfrac{\sfrac{2}{H_n}}{1 - \cfrac{\sfrac{3}{H_n}}{1-\cfrac{\sfrac{3}{H_n}}{1-\cdots}}}}}}} \ \ (n \to \infty)$$
of $\PP(e^\gamma n)$ \cite[Corollary 4.5]{ell0}, where 
$$e^\gamma = \lim_{n \to \infty} \frac{e^{H_n}}{n} =  1.781072417990\ldots,$$
and also where $$e^\gamma = \lim_{x \to \infty} \frac{\PP(x)}{\prod_{p \leq x} \left( 1-\frac{1}{p}\right)}$$
due to the prime number theorem and the third of Mertens' famous three theorems of 1874 \cite{mert}.  In 1984, G.\ Robin proved \cite{rob} that the Riemann hypothesis holds if and only if 
$$\sum_{d | n} d \leq e^\gamma n \log \log n, \quad \forall n \geq 5041.$$
Since by Mertens' second theorem and a 1913 result of Gronwall \cite{gron} one also has
$$e^\gamma = \limsup_{n \to \infty} \frac{\sum_{d | n} d }{  n \log \log n}  =  \limsup_{n \to \infty} \frac{\sum_{d | n} \frac{1}{d} }{   \sum_{p \leq n} \frac{1}{p}},$$
the constant $e^\gamma$ in Robin's equivalence is the best possible.  In 2000, J.\ Lagarias used Robin's result to show  \cite{lag} that the Riemann hypothesis holds if and only if
$$\sum_{d | n} d <  H_n + e^{H_n}\log H_n, \quad \forall n > 1,$$
if and only if
$$\sum_{d | n} d <  e^{H_n}\log H_n, \quad \forall n > 60.$$
Lagarias' inequalities are closely related to Robin's because, by asymptotics noted earlier, one has
$$e^\gamma n \log \log n  \sim e^{H_n} \log H_n \sim H_n + e^{H_n} \log H_n \ (n \to \infty).$$

The three ``elementary'' reformulations of the Riemann hypothesis noted above concern the sum of divisors function rather than the prime counting function.  In this paper, we provide several reformulations of the Riemann hypothesis that are expressed solely in terms of the harmonic numbers and the prime counting function.  For example, we show in Section 5 that the Riemann hypothesis holds if and only if
$$\PP(e^\gamma n) =  \frac{1}{n} \sum_{k = 1}^{n} \frac{1}{H_k} + O \left(\frac{H_n}{ \sqrt{n}}\right) \ (n \to \infty),$$
if and only if
 $$\left|\PP(e^\gamma n) -\frac{1}{n} \sum_{k = 2}^{n-1}   \frac{1}{H_k}   \right| < \frac{1}{8 \pi e^{\gamma/2}} \frac{H_n}{\sqrt{n}}+\frac{1+0.4986013304}{n}, \quad \forall n \geq 1.$$
Moreover, any choice of larger constants still yields a Riemann hypothesis equivalent, so, for example, since $\frac{1}{8 \pi e^{\gamma/2}} = \frac{1}{33.541358\ldots} < \frac{1}{33},$
the Riemann hypothesis is also equivalent to
 $$\left|\PP(e^\gamma n) -\frac{1}{n} \sum_{k = 2}^{n-1}   \frac{1}{H_k}   \right| <  \frac{1}{33} \frac{H_n}{\sqrt{n}}+\frac{3}{2n}, \quad \forall n \geq 1.$$
Such a reformulation of the Riemann hypothesis is noteworthy because it makes no mention of transcendental functions and the only numbers in the given inequality that may not be rational are  $e^\gamma$ and $\sqrt{n}$.

The second and third of our Riemann hypothesis equivalents above are made possible by a well-known reformulation due to L.\ Schoenfeld \cite{sch}: the Riemann hypothesis holds if and only if
\begin{align}\label{sch}
|\pi (x)-\operatorname {li} (x)|<{\frac {1}{8\pi }}{\sqrt {x}}\log x, \quad \forall x \geq 2657.
\end{align}
The constant $0.4986013304$ in the second Riemann hypothesis equivalent can be replaced with any other upper bound of the limit
$$\kappa =  \lim_{n \to \infty} \left (\frac{\li(e^{\gamma}n) }{e^\gamma}-\sum_{k = 1}^{n-1} \frac{1}{H_k}\right) \approx 0.4986.$$
The limit $\kappa$ exists because the sequence $\frac{\li(e^{\gamma}n) } {e^\gamma} -\sum_{k = 1}^{n-1} \frac{1}{H_k}$ is positive, strictly increasing, and bounded above, and therefore bounded above by $\kappa$.  Our results allow us to compute tight upper and lower bounds of constants like $\kappa$, so that, for example, one has
$$ 0.4985987518 < \kappa < 0.4986013304.$$
Moreover, since $\li(x) > \pi(x) + \frac{1.49}{2}$ for all $x \geq 6$ for which the value of $\pi(x)$ is known, the sum $ \sum_{k = 2}^{n-1} \frac{1}{H_k}$ is closer to $\frac{\pi(e^\gamma n)}{e^\gamma}$ than is $\frac{\li(e^\gamma n)}{e^\gamma}$ for all $n \geq 6$ for which the value of $\pi(e^\gamma n)$ is known.  Thus, approximations of the prime counting function using harmonic numbers can indeed be worthy rivals of the standard logarithmic integral approximation.

In Section 3, we prove that, for all $t \in \RR$, one has
\begin{align*}
\frac{\li (e^t n)}{e^t}  & =  \sum_{\mu e^{-t}  \leq k < n} \frac{1}{H_k-\gamma+t} + \beta_n(t)  \\  & = \sum_{\mu e^{-t}  \leq k < n}   \frac{1}{H_k-\gamma+t} + \beta(t)- \frac{1+o(1)}{12n(\log n)^2} \ (n \to \infty)
\end{align*}
for unique error functions $\beta_n(t) > 0$ and $\beta(t) > 0$ with $\beta(t) = \lim_{n \to \infty} \beta_n(t)$, where $\mu = 1.451369234883\ldots$ is the unique positive zero of $\li(x)$, called the {\emph Ramanujan--Soldner constant}. 
We also prove explicit bounds on $\beta_n(t)$ and $\beta(t)$ in terms of $t$ that allow us to compute $\beta(t)$ to any desired degree of accuracy,  and we show, for example, that
$$\beta(t) <  \limsup_{t \to -\infty} \beta(t) =  \frac{1}{\log \mu}= 2.684510350820\ldots,$$ 
$$\beta(t)  =  \frac{\li(e^t \lceil \mu e^{-t} \rceil)}{e^t} +\frac{(1 +o(1))e^t}{12\mu (\log \mu)^2} \ (t \to -\infty),$$
and $$\beta(t) = \frac{1}{t} + O \left( \frac{1}{t^2}\right) \ (t \to \infty),$$
where $\lceil x \rceil$ (resp., $\lfloor x \rfloor$) denotes the ceiling (resp., floor) of $x$ for any real number $x$.  Note that the constant $\kappa \approx 0.4986$ introduced earlier is precisely $\beta(\gamma)$. 

 In Section 4, we use the results noted above to make precise the approximation
 $$\frac{\pi (e^t n)}{e^t} \approx \sum_{\mu e^{-t}  \leq k < n}   \frac{1}{H_k-\gamma+t}, \quad n \geq  \mu e^{-t},$$
from which we derive our Riemann hypothesis equivalents in Section 5.  Analogous to the integral representation
$\int_{\mu e^{-t}}^x \frac{du}{t+\log u}$
of $\frac{\li (e^t x)}{e^t}$,  the sum $\sum_{\mu e^{-t} \leq k < x}   \frac{1}{H_k-\gamma+t}$ can be represented as the discrete integral
$\int_{\mu e^{-t}}^{x+0^-} \frac{d\nu(u)}{H_u-\gamma+t}$.  Our approximation to $\frac{\pi (e^t n)}{e^t}$ above  is therefore a doubly discretized version of the logarithmic integral.

Since the Riemann hypothesis, for all we currently know,  could be false, we find it useful to generalize our  reformulations of the  hypothesis to  unconditional results expressed in terms of the supremum of the real parts of the zeros of the Riemann zeta function.  Eq.\ (\ref{ess}) is the quintessential example of such a generalization.  In fact, all of the ``heavy lifting'' regarding the prime counting function in this paper is accomplished by Eqs.\ (\ref{ess}) and (\ref{sch}) and a theorem of Montgomery and Vaughan (Theorem \ref{mv}).  We are thus able to focus most of our attention on using the harmonic numbers to approximate the logarithmic integral using elementary analysis.

I would like to thank Sean Lubner and Daniel Brice for writing Python code to check the inequalities in Corollaries \ref{RH5} through \ref{RH9} for small values of $n$.

\section{Approximating $\log x$ with harmonic numbers}

In this section, we list some properties of the harmonic numbers that  form the basis for our results. 
 
From the functional equation
$$\Gamma(z+1) = z \Gamma(z), \quad z \in \CC\backslash \{0,-1,-2,-3,\ldots\}$$
for the gamma function $\Gamma(z)$ follows, by logarithmic differentiation, the functional equation
$$\Psi(z+1) = \frac{1}{z}+\Psi(z), \quad z \in \CC\backslash \{0,-1,-2,-3,\ldots\}$$
for the {\it digamma function} $\Psi(z) = \frac{\Gamma'(z)}{\Gamma(z)}$.  Since $\Psi(1) = \Gamma'(1) = -\gamma$ and $H_0 = 0$, it follows that the harmonic numbers $H_n$ are interpolated by the complex function
\begin{align}\label{Hpsi}
H_z = \Psi(z+1)+ \gamma = \sum_{k = 1}^\infty \left(\frac{1}{k}-\frac{1}{z+k}\right) = \lim_{n \to \infty} \left( H_n - \sum_{k = 1}^n \frac{1}{z+k}\right), \nonumber \\   z \in \CC\backslash \{-1,-2,-3,\ldots\}.
\end{align}
It is known that 
$$H_z -\gamma -\log z \sim \frac{1}{2z} \ (z \to \infty),$$
and, more generally, by the Euler-Maclaurin formula, that one has the (divergent) asymptotic expansion
$$H_z -\gamma -\log z - \frac{1}{2z} \sim  \sum_{k = 1}^\infty \frac{-B_{2k}}{2kz^{2k}} \ (z \to \infty),$$
where $B_k$ is the $k$th Bernoulli number.  From this well-known expansion  follows the asymptotic expansion
$$H_{z-\sfrac{1}{2}} -\gamma -\log z \sim  \sum_{k = 1}^\infty \frac{(1-2^{-2k+1})B_{2k} }{2kz^{2k}} \ (z \to \infty).$$
 From the latter expansion and \cite[Theorem 8]{alz}, one can show that
$$\sum_{k = 1}^{n+1} \frac{(1-2^{-2k+1})B_{2k} }{2kx^{2k}} < H_{x-\sfrac{1}{2}} -\gamma -\log x <  \sum_{k = 1}^{n} \frac{(1-2^{-2k+1})B_{2k} }{2kx^{2k}}$$
for all $x > 0$ and all odd positive integers $n$.
Thus, for example, one has
\begin{align*} \frac{1}{24(x+\sfrac{1}{2})^2} - \frac{7}{960(x+\sfrac{1}{2})^4}< H_x - \gamma - \log (x + \sfrac{1}{2}) < \frac{1}{24(x+\sfrac{1}{2})^2}, \quad \forall x > -\sfrac{1}{2}.
\end{align*}
and therefore
\begin{align}\label{harmony} \frac{1}{24(x+1)^2} < H_x - \gamma - \log (x + \sfrac{1}{2}) < \frac{1}{24(x+\sfrac{1}{2})^2}, \quad \forall x > -\sfrac{1}{2}.
\end{align}
In particular, $\log(x +\sfrac{1}{2})+\gamma$ is an excellent approximation for $H_x$, and, correspondingly,  $H_{x-\sfrac{1}{2}} - \gamma$  is an excellent approximation for $\log x$.  Since
$$H_x - \gamma - \log x \sim \frac{1}{2x} \ (x \to \infty),$$
while $$H_x - \gamma - \log(x +\sfrac{1}{2}) \sim \frac{1}{24 x^2} \ (x \to \infty),$$ the advantage  gained by shifting the log by $\sfrac{1}{2}$ is clear.  This makes sense heuristically because of the advantage, for monotonic functions, of the midpoint rule over left-hand or right-hand Riemann sums.  See  Figures \ref{har5} and \ref{har4} for a graphical comparison of the approximations above.

\begin{figure}[ht!]
\centering
\includegraphics[width=100mm]{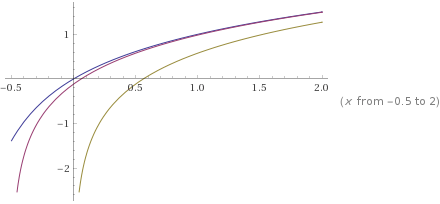}
\caption{Graphs of $H_x > \log (x + \sfrac{1}{2}) + \gamma > \log x + \gamma$  on $(-\sfrac{1}{2},2]$  \label{har5}}
\end{figure}

\begin{figure}[ht!]
\centering
\includegraphics[width=100mm]{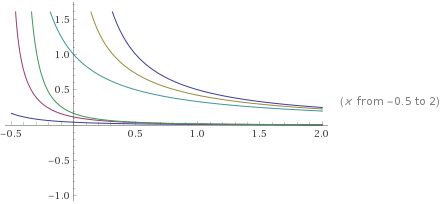}
\caption{Graphs of $\frac{1}{24(x+1)^2}$, $H_x -\gamma- \log (x + \sfrac{1}{2})$, $\frac{1}{24(x+\sfrac{1}{2})^2}$, $\frac{1}{2(x+\sfrac{1}{2})}$,  $H_x -\gamma-\log x$, $\frac{1}{2x}$ on $(-\sfrac{1}{2},2]$, ordered from smallest to largest on $[0,\infty)$  \label{har4}}
\end{figure}

By the functional equation $$H_z = \frac{1}{z}+ H_{z-1} , \quad  z \in \CC\backslash \{0,-1,-2,-3,\ldots\}$$ for $H_z$, Eq.\ (\ref{harmony}) generalizes as follows.

\begin{proposition}\label{harm}
For  all $x  \in \RR\backslash \{-1,-2,-3,\ldots\}$ and all integers  $n> x-\sfrac{1}{2}$, one has
\begin{align*}
\frac{1}{24(x+n+1)^2}  \leq ( H_x -\gamma)  -\left(\log (x+n+\sfrac{1}{2})-\sum_{k = 1}^{n} \frac{1}{x+k} \right) \leq \frac{1}{24(x+n+\sfrac{1}{2})^2}.
\end{align*}
Consequently, for all $x  \in \RR\backslash \{-1,-2,-3,\ldots\}$ one has
$$\Psi(x+1) =  H_x -\gamma = \log (x+n+\sfrac{1}{2}) -\sum_{k = 1}^{n} \frac{1}{x+k} + \frac{1+ o(1)}{24n^2}  \ (n \to \infty).$$
\end{proposition}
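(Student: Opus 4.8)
The plan is to reduce the proposition entirely to Equation~(\ref{harmony}) via the functional equation $H_z = \tfrac{1}{z} + H_{z-1}$. First I would record its iterated form: rewriting the equation as $H_z = H_{z+1} - \tfrac{1}{z+1}$ and applying it repeatedly gives, for every $x \in \RR\backslash\{-1,-2,-3,\ldots\}$ and every integer $n$ whose intermediate arguments avoid the poles of $\Psi$, the identity
$$H_x = H_{x+n} - \sum_{k=1}^{n} \frac{1}{x+k},$$
where for $n \le 0$ the sum is read in the usual way as $-\sum_{k=n+1}^{0}\tfrac{1}{x+k}$. The case $n \ge 1$ is a one-line induction, the case $n = 0$ is vacuous, and the case $n \le -1$ is the same induction run downward. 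The hypothesis on $n$ guarantees $x+n > -\sfrac{1}{2}$, which in particular keeps all of $x+1,\ldots,x+n$ (resp.\ all of $x+n+1,\ldots,x$) away from $\{0,-1,-2,\ldots\}$, so the displayed identity is legitimate in the stated range.

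Next I would substitute this identity into the quantity to be estimated. Subtracting $\gamma$ and cancelling the finite sum,
$$(H_x-\gamma) - \left( \log(x+n+\sfrac{1}{2}) - \sum_{k=1}^{n}\frac{1}{x+k} \right) = H_{x+n} - \gamma - \log(x+n+\sfrac{1}{2}).$$
Since $x+n > -\sfrac{1}{2}$, Equation~(\ref{harmony}) applied with $x$ replaced by $x+n$ bounds the right-hand side strictly between $\tfrac{1}{24(x+n+1)^2}$ and $\tfrac{1}{24(x+n+\sfrac{1}{2})^2}$, which is precisely the asserted double inequality (indeed it holds with strict inequalities, so the stated non-strict version follows).

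For the ``consequently'' clause I would fix $x$ and let $n\to\infty$. Writing $(x+n+c)^{-2} = n^{-2}\bigl(1+(x+c)/n\bigr)^{-2} = (1+o(1))\,n^{-2}$ for $c\in\{1,\sfrac{1}{2}\}$, both outer bounds equal $\tfrac{1+o(1)}{24n^2}$, so the squeeze theorem yields
$$H_x-\gamma = \log(x+n+\sfrac{1}{2}) - \sum_{k=1}^{n}\frac{1}{x+k} + \frac{1+o(1)}{24n^2}\quad(n\to\infty),$$
and the identification $H_x-\gamma = \Psi(x+1)$ is Equation~(\ref{Hpsi}). This asymptotic holds for every $x\in\RR\backslash\{-1,-2,\ldots\}$, since the double inequality is available for all sufficiently large $n$.

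The only real care the argument demands is the bookkeeping for the iterated functional equation and the exact range of $n$ on which it is an honest identity (in particular the meaning of $\sum_{k=1}^{n}$ when $n \le 0$); all of the genuine analytic content is carried by Equation~(\ref{harmony}), which is already in hand, so I do not anticipate a substantial obstacle.
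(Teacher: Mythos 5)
Your proof is correct and follows exactly the route the paper intends (the paper gives no explicit proof, remarking only ``By the functional equation \dots, Eq.~(\ref{harmony}) generalizes as follows''): iterate $H_z = \tfrac{1}{z}+H_{z-1}$ to obtain $H_x = H_{x+n}-\sum_{k=1}^n \tfrac{1}{x+k}$, substitute, and apply Eq.~(\ref{harmony}) at $x+n$, then pass to the limit for the asymptotic. One small point worth flagging: you assert that the stated hypothesis ``$n > x-\sfrac{1}{2}$'' guarantees $x+n>-\sfrac{1}{2}$, but as written it does not when $x<0$ (take $x=-5.3$, $n=-5$). The condition that actually makes $\log(x+n+\sfrac{1}{2})$ real and lets Eq.~(\ref{harmony}) apply is $n>-x-\sfrac{1}{2}$, i.e.\ $x+n>-\sfrac{1}{2}$; the inequality sign in front of $x$ in the paper's statement appears to be a typo, and your argument is the correct proof of the correctly-stated proposition.
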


\section{Approximating $\li(x)$ with harmonic numbers}

Let $\mu = 1.451369234883\ldots$ denote the {\it Ramanujan--Soldner constant},  which by definition is the unique positive zero of $\li(x)$, or equivalently the unique positive real number $\mu$ such that $\li(x) = \int_\mu^x \frac{dx}{\log x}$ for all $x > 1.$
In this section we  make precise the approximation
 $$\frac{\li (e^t n)}{e^t} \approx \sum_{k = \lceil \mu e^{-t} \rceil}^{n-1}   \frac{1}{H_k-\gamma+t}, \quad \forall n \geq \lceil \mu e^{-t} \rceil.$$  More specifically, we prove the following.

\begin{theorem}\label{maintheorem}
For all $t \in \RR$, one has
\begin{align*}
\frac{\li (e^t n)}{e^t}  & =  \sum_{k = \lceil \mu e^{-t} \rceil}^{n-1}    \frac{1}{H_k-\gamma+t} + \beta_n(t)  \\  & = \sum_{k =  \lceil \mu e^{-t} \rceil}^{n-1}    \frac{1}{H_k-\gamma+t} + \beta(t)- \frac{1+o(1)}{12n(\log n)^2} \ (n \to \infty)
\end{align*}
for unique error functions $\beta_n(t) > 0$ and $\beta(t) > 0$ with $\beta(t) = \lim_{n \to \infty} \beta_n(t)$.
\end{theorem}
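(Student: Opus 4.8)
The plan is to reduce the statement to a term-by-term comparison between the integral $\int_{\mu e^{-t}}^n \frac{du}{t+\log u}$ and the given sum, using convexity of the integrand $u \mapsto (t+\log u)^{-1}$ together with the sharp two-sided bound (\ref{harmony}) for $H_k - \gamma$.

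First I would set up the bookkeeping. The substitution $x = e^t u$ in $\li(x) = \int_\mu^x \frac{dx}{\log x}$ gives $\frac{\li(e^t n)}{e^t} = \int_{\mu e^{-t}}^n \frac{du}{t+\log u}$, and I take as the definition
$$\beta_n(t) := \frac{\li(e^t n)}{e^t} - \sum_{k = \lceil \mu e^{-t} \rceil}^{n-1} \frac{1}{H_k - \gamma + t},$$
so uniqueness is automatic and all that remains is to establish that $\beta_n(t) > 0$, that $\beta(t) := \lim_{n\to\infty}\beta_n(t)$ exists and is positive, and that $\beta(t) - \beta_n(t) = \frac{1+o(1)}{12\, n(\log n)^2}$. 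Put $m = \lceil \mu e^{-t}\rceil$ and $f(u) = (t+\log u)^{-1}$. Splitting the integral at the integers,
$$\beta_n(t) = \int_{\mu e^{-t}}^{m} f(u)\,du + \sum_{k = m}^{n-1} a_k(t), \qquad a_k(t) := \int_k^{k+1} f(u)\,du - \frac{1}{H_k - \gamma + t}.$$
On $[\mu e^{-t},\infty)$ one has $t + \log u \geq \log\mu > 0$ since $\mu > 1$, so $f$ is positive there, and $f''(u) = \frac{1}{u^2(t+\log u)^2}\left(1 + \frac{2}{t+\log u}\right) > 0$, i.e.\ $f$ is convex on this interval. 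The same positivity gives $H_k - \gamma + t > \log(k+\sfrac{1}{2}) + t > 0$ for all $k \geq m$ (note $m \geq 1$), so the summands are well defined.

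Next, positivity and convergence. Convexity of $f$ gives the midpoint underestimate $\int_k^{k+1} f \geq f(k+\sfrac{1}{2}) = \frac{1}{t+\log(k+\sfrac{1}{2})}$, while (\ref{harmony}) gives $H_k - \gamma > \log(k+\sfrac{1}{2})$, hence $\frac{1}{H_k - \gamma + t} < \frac{1}{t+\log(k+\sfrac{1}{2})} \leq \int_k^{k+1} f$ and therefore $a_k(t) > 0$ for every $k \geq m$. Since also $\int_{\mu e^{-t}}^m f \geq 0$, it follows that $\beta_n(t) > 0$; moreover $\beta_{n+1}(t) - \beta_n(t) = a_n(t) > 0$, so $(\beta_n(t))_n$ is strictly increasing, and to get the limit it suffices to bound $\sum_{k \geq m} a_k(t)$. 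This follows from the crude estimate $a_k(t) = O(k^{-2}(\log k)^{-2})$ (implied constant depending on the fixed $t$): the midpoint-rule remainder is $\int_k^{k+1} f - f(k+\sfrac{1}{2}) = \tfrac{1}{24} f''(\xi_k)$ for some $\xi_k \in (k,k+1)$, which is $O(k^{-2}(\log k)^{-2})$, and $0 < f(k+\sfrac{1}{2}) - \frac{1}{H_k - \gamma + t} = \frac{H_k - \gamma - \log(k+\sfrac{1}{2})}{(t+\log(k+\sfrac{1}{2}))(H_k - \gamma + t)}$, which is $O(k^{-2}(\log k)^{-2})$ by (\ref{harmony}). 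Hence $\beta(t) := \lim_n \beta_n(t)$ exists and $\beta(t) > \beta_n(t) > 0$.

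Finally, the rate. Here I would upgrade the two estimates just used to asymptotics (fix $t$, let $k \to \infty$). Since $\xi_k \in (k,k+1)$, $f''(\xi_k) = \frac{1+o(1)}{k^2(\log k)^2}$, so the geometric part of $a_k(t)$ is $\int_k^{k+1} f - f(k+\sfrac{1}{2}) = \frac{1+o(1)}{24\, k^2(\log k)^2}$. For the arithmetic part, (\ref{harmony}) gives $H_k - \gamma - \log(k+\sfrac{1}{2}) = \frac{1+o(1)}{24\, k^2}$, whence $f(k+\sfrac{1}{2}) - \frac{1}{H_k - \gamma + t} = \frac{H_k - \gamma - \log(k+\sfrac{1}{2})}{(t+\log(k+\sfrac{1}{2}))(H_k - \gamma + t)} = \frac{1+o(1)}{24\, k^2(\log k)^2}$. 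Adding, $a_k(t) = \frac{1+o(1)}{12\, k^2(\log k)^2}$. Since $\beta(t) - \beta_n(t) = \sum_{k \geq n} a_k(t)$ and $k^{-2}(\log k)^{-2}$ is eventually decreasing, a sum-to-integral comparison together with $\int_n^\infty \frac{dx}{x^2(\log x)^2} = \frac{1}{n(\log n)^2} + O\left(\frac{1}{n(\log n)^3}\right)$ (one integration by parts) yields $\sum_{k \geq n} a_k(t) = \frac{1+o(1)}{12\, n(\log n)^2}$, which is exactly the asserted identity $\frac{\li(e^t n)}{e^t} = \sum_{k = m}^{n-1} \frac{1}{H_k - \gamma + t} + \beta(t) - \frac{1+o(1)}{12\, n(\log n)^2}$.

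The main obstacle is getting the constant $\tfrac{1}{12}$ in the rate right: it is the sum of a geometric contribution $\tfrac{1}{24}$, coming from the midpoint-rule defect of the convex integrand, and an arithmetic contribution $\tfrac{1}{24}$, coming from the $\tfrac{1}{24 k^2}$ gap between $H_k - \gamma$ and $\log(k+\sfrac{1}{2})$ recorded in (\ref{harmony}); the two $o(1)$'s must be controlled with enough uniformity in $k$ to survive the tail summation without corrupting the leading term. A minor separate point is the degenerate case in which $\mu e^{-t} \in \ZZ$ and $n = \mu e^{-t}$, where $\beta_n(t) = 0$; I would either exclude that single value of $n$ or note that it plays no role in the applications of Sections 4 and 5.
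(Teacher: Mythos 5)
Your proposal is correct and it is essentially the paper's own proof, just packaged more compactly. You split each term
\[
a_k(t) \;=\; \Bigl(\int_k^{k+1} f(u)\,du - f(k+\tfrac12)\Bigr) \;+\; \Bigl(f(k+\tfrac12) - \tfrac{1}{H_k-\gamma+t}\Bigr),
\]
and the paper does exactly the same split but at the level of partial sums: it introduces $\theta_n(t,N) = \eta_{n-1}(t,N)+\delta_{n-1}(t,N)$, where $\delta$ collects the midpoint-rule defects (your first bracket, handled in Proposition~\ref{mainprop2} via the convexity of $f$ and monotonicity of $f''$) and $\eta$ collects the gaps $\tfrac{1}{t+\log(k+1/2)}-\tfrac{1}{H_k-\gamma+t}$ (your second bracket, handled in Proposition~\ref{mainprop1} via Eq.~(\ref{harmony})). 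In both arguments each piece contributes a $\tfrac{1}{24}$ to the rate, giving $\tfrac{1}{12}$. The only substantive difference is one of ambition rather than method: the paper keeps the inequalities two-sided and fully explicit (Theorem~\ref{mainprop}, Corollary~\ref{monocor}, Corollary~\ref{maincor2b}) because those explicit bounds are needed in Sections~4--6 to compute $\beta(t)$ numerically and to prove the Riemann-hypothesis equivalences, whereas you retain only the $O$ and $\sim$ information, which is all Theorem~\ref{maintheorem} itself requires. Your observation about the degenerate case $\mu e^{-t}\in\ZZ$, $n=\mu e^{-t}$ (where $\beta_n(t)=0$, not $>0$) is a genuine, if cosmetic, gap in the statement; the paper implicitly sidesteps it by indexing $\theta_n(t,N)$ only over $n>N$ in Theorem~\ref{mainprop}(1).
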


To prove the theorem we require the following notation.  

\begin{definition} Let $N$ be a positive integer, and let $t \in \RR$ with $t > -\log N$.
\begin{enumerate}
\item Let
$$\theta_n(t,N) =  \int_{N}^n \frac{dx}{t + \log x} -\sum_{k = N}^{n-1}  \frac{1}{H_k-\gamma+t}$$
for all $n \geq N$,  and  let
$$\theta(t,N) =  \lim_{n \to \infty} \theta_n(t,N).$$
\item Let
$$\eta_n(t,N) =   \sum_{k = N}^{n} \left(\frac{1}{t+\log(k+\sfrac{1}{2})}-\frac{1}{H_n-\gamma+t}\right).$$
for all $n \geq N$,  and  let
$$\eta(t,N) =  \lim_{n \to \infty} \eta_n(t,N).$$
\item Let
 $$\delta_n(t,N) =  \int_{N}^n \frac{dx}{t+\log x} - \sum_{k = N}^{n-1}  \frac{1}{\log (k+\sfrac{1}{2})+t}$$
for all $n \geq N$, and let
 $$\delta(t,N) =  \lim_{n \to \infty} \delta_n(t,N).$$
\end{enumerate}
\end{definition}

Our first goal is to prove the following theorem, from which we will then deduce Theorem \ref{maintheorem}.

\begin{theorem}\label{mainprop}
Let $N$ be a positive integer, and let $t > -\log N$.  
\begin{enumerate}
\item  The sequence $$\theta_n(t,N)= \int_N^{n} \frac{dx}{t+\log x} - \sum_{k = N}^{n-1} \frac{1}{H_k-\gamma+t}, \quad n > N$$
is positive, strictly increasing, and bounded above.    Consequently, the limit
\begin{align*}
\theta(t,N) = \lim_{n \to \infty} \theta_n(t,N) > 0
\end{align*}
exists.
\item For all $n \geq N$, one has
\begin{align*}
&\int_{n+1}^{\infty} \frac{dx}{24x^2 (t+\log x)^2} +
\frac{1}{24(n+1)(t+\log (n+1))^2} \\
 & \qquad \leq \theta(t,N) - \theta_n(t,N)  = \theta(t,n) \\ & \qquad \leq \int_n^\infty \frac{dx}{24x^2 (t+\log x)^2}  +\frac{1}{24(n+\sfrac{1}{2})(t+\log (n+\sfrac{1}{2}))^2} \\ & \qquad \qquad  + \frac{1}{24n^2(t+\log n)^2}  +  \frac{1}{12n^2(t+\log n)^3}.
\end{align*}
\item One has
$$\theta(t,N) = \theta_{n}(t,N) + \frac{1+o(1)}{12n(\log n)^2} \ (n \to \infty)$$
and
$$\theta(t,N) = O\left( \frac{1}{N t^2}\right) \ (t \to \infty),$$
where the $O$ constant does not depend on $N$.
\end{enumerate}
\end{theorem}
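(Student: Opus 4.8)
The plan is to reduce everything to Proposition \ref{harm} applied at each integer point $k$, which controls the discrepancy between $H_k - \gamma$ and $\log(k+\sfrac12)$ (and more generally relates $H_x-\gamma$ to a shifted log minus a finite harmonic tail). The overall strategy is to split the integrand/summand difference $\frac{1}{t+\log x}$ versus $\frac{1}{H_k-\gamma+t}$ into two comparisons: first $\frac{1}{t+\log x}$ versus $\frac{1}{t+\log(k+\sfrac12)}$ on the interval $[k,k+1]$ (this is the ``midpoint rule vs.\ integral'' comparison, handled by $\delta_n(t,N)$ and $\delta(t,N)$), and second $\frac{1}{t+\log(k+\sfrac12)}$ versus $\frac{1}{H_k-\gamma+t}$ pointwise (handled by $\eta_n(t,N)$ and $\eta(t,N)$, using \eqref{harmony}). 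Then $\theta_n = \delta_n + (\text{a sum of pointwise differences})$, and the claimed two-sided bound on $\theta(t,N)-\theta_n(t,N)=\theta(t,n)$ should come from combining the standard midpoint-rule error estimate for the convex decreasing function $x\mapsto \frac{1}{t+\log x}$ with the pointwise bounds \eqref{harmony}.

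First I would establish part (1): positivity, monotonicity, and boundedness of $\theta_n(t,N)$. For monotonicity, $\theta_{n+1}(t,N)-\theta_n(t,N) = \int_n^{n+1}\frac{dx}{t+\log x} - \frac{1}{H_n-\gamma+t}$. Since $H_n-\gamma > \log(n+\sfrac12) > \log x$ is false — rather $H_n - \gamma < \log(n+1)$ and $H_n-\gamma > \log(n+\sfrac12)$ — one has $\frac{1}{H_n-\gamma+t} < \frac{1}{\log(n+\sfrac12)+t}$, and by convexity of $\frac{1}{t+\log x}$ the midpoint value $\frac{1}{\log(n+\sfrac12)+t}$ is at most the average $\int_n^{n+1}\frac{dx}{t+\log x}$; hence the difference is positive, giving both monotonicity and (since $\theta_N=0$) positivity for $n>N$. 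Wait — I should double-check the direction: $\frac1{t+\log x}$ is convex in $x$ on $x>e^{-t}$ (second derivative positive), so Jensen gives $\int_n^{n+1}\frac{dx}{t+\log x} \ge \frac{1}{t+\log(n+\sfrac12)}$, and combined with $H_n-\gamma > \log(n+\sfrac12)$ we get $\int_n^{n+1}\frac{dx}{t+\log x} \ge \frac{1}{t+\log(n+\sfrac12)} > \frac{1}{t+H_n-\gamma}$, so indeed $\theta_n$ is strictly increasing. For boundedness above I would show the telescoped tail $\sum_{k\ge n}\bigl(\int_k^{k+1}\frac{dx}{t+\log x} - \frac{1}{H_k-\gamma+t}\bigr)$ converges, which follows from the explicit bounds in part (2); so (1) and (2) are best proved together, with (2) supplying the convergence needed for (1).

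For part (2), the upper and lower bounds: on each $[k,k+1]$ write $\int_k^{k+1}\frac{dx}{t+\log x} - \frac{1}{H_k-\gamma+t} = \underbrace{\left(\int_k^{k+1}\frac{dx}{t+\log x} - \frac{1}{t+\log(k+\sfrac12)}\right)}_{\text{midpoint error } \ge 0} + \underbrace{\left(\frac{1}{t+\log(k+\sfrac12)} - \frac{1}{H_k-\gamma+t}\right)}_{\text{use }\eqref{harmony}}.$ The midpoint error for a $C^2$ convex function $g$ is $\frac{1}{24}g''(\xi_k)$ for some $\xi_k\in(k,k+1)$; here $g''(x) = \frac{2}{x^2(t+\log x)^3} + \frac{1}{x^2(t+\log x)^2}$ (up to sign bookkeeping — I'd compute $\frac{d^2}{dx^2}\frac1{t+\log x}$ carefully), and summing $\frac1{24}g''(\xi_k)$ over $k\ge n$ is comparable to $\int_n^\infty \frac{dx}{24x^2(t+\log x)^2}$ plus lower-order terms, which is where the $\frac{1}{24n^2(t+\log n)^2} + \frac{1}{12n^2(t+\log n)^3}$ correction and the integral-test boundary terms come from. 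The second piece is bounded using $0 < H_k-\gamma - \log(k+\sfrac12) < \frac{1}{24(k+\sfrac12)^2}$ from \eqref{harmony}, so $\frac{1}{t+\log(k+\sfrac12)} - \frac{1}{H_k-\gamma+t}$ is negative and of size $O\bigl(\frac{1}{k^2(t+\log k)^2}\bigr)$; its contribution to the tail sum gets folded into the same $\int_n^\infty\frac{dx}{24x^2(t+\log x)^2}$-type term and boundary corrections via the integral test (monotonicity of the summand). Assembling the one-sided integral-test inequalities $\int_{n+1}^\infty f \le \sum_{k\ge n+1}f(k) \le \int_n^\infty f$ for the relevant positive decreasing $f$, together with the isolated $k=n$ term treated separately, should yield exactly the stated sandwich; the identity $\theta(t,N)-\theta_n(t,N) = \theta(t,n)$ is immediate from the definition since $\theta_n(t,N) = \int_N^n - \sum_{N}^{n-1}$ and the integral/sum over $[N,n]$ telescopes out.

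For part (3), the asymptotic $\theta(t,N) = \theta_n(t,N) + \frac{1+o(1)}{12n(\log n)^2}$ follows by extracting the leading term of the bounds in (2): as $n\to\infty$ with $t$ fixed, $\int_n^\infty\frac{dx}{24x^2(t+\log x)^2} = O\bigl(\frac{1}{n(\log n)^2}\bigr)$ — more precisely $\sim \frac{1}{24n(\log n)^2}$ — while $\frac{1}{24(n+\sfrac12)(t+\log(n+\sfrac12))^2} \sim \frac{1}{24n(\log n)^2}$, and the remaining terms are $O\bigl(\frac{1}{n^2(\log n)^2}\bigr)$; since the lower bound has the same leading asymptotics, both upper and lower bounds are $\frac{2+o(1)}{24n(\log n)^2} = \frac{1+o(1)}{12n(\log n)^2}$, forcing the stated asymptotic. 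For the uniform-in-$N$ bound $\theta(t,N) = O(1/(Nt^2))$ as $t\to\infty$, I would take the $n=N$ case of the part (2) upper bound and estimate each term for large $t$: $\int_N^\infty\frac{dx}{24x^2(t+\log x)^2} \le \frac{1}{24t^2}\int_N^\infty\frac{dx}{x^2} = \frac{1}{24Nt^2}$, and likewise each of the boundary terms is $\le \frac{C}{Nt^2}$ (the $(t+\log n)^3$ term is even smaller), so the sum is $O(1/(Nt^2))$ with an absolute constant. The main obstacle I anticipate is the bookkeeping in part (2): getting the constants and the precise combination of boundary terms in the sandwich exactly right requires carefully tracking (i) the sign and magnitude of $\frac{d^2}{dx^2}\frac{1}{t+\log x}$, (ii) the one-sided integral-test inequalities applied to possibly non-monotone-at-first sums (one must verify the summand $x\mapsto \frac{1}{x^2(t+\log x)^2}$ etc.\ is eventually decreasing, which it is for $x>e^{-t}$), and (iii) isolating the $k=n$ term so that it does not get double-counted — this is routine calculus but error-prone, whereas parts (1) and (3) are then essentially corollaries.
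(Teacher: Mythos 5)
Your decomposition into a midpoint-rule error plus a pointwise comparison of $\frac{1}{t+\log(k+\sfrac12)}$ against $\frac{1}{H_k-\gamma+t}$ is exactly the paper's strategy: the paper packages the first piece as $\delta_n(t,N)$ and the second as $\eta_n(t,N)$, proves Propositions~\ref{mainprop1} and~\ref{mainprop2} for them separately using \eqref{harmony} and the midpoint-rule error formula, and then sums via $\theta_n = \eta_{n-1} + \delta_{n-1}$. One small slip worth flagging: in your discussion of part (2) you assert that $\frac{1}{t+\log(k+\sfrac12)} - \frac{1}{H_k-\gamma+t}$ is \emph{negative}, but since \eqref{harmony} gives $H_k-\gamma > \log(k+\sfrac12)$ this difference is \emph{positive} (as you yourself correctly use when proving monotonicity in part (1)); both pieces of the decomposition contribute with the same sign, which is what makes the sandwich in part (2) clean, so the sign error does not derail the argument but should be corrected before you work out the constants.
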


We divide the proof of Theorem \ref{mainprop} into two main steps, based on the  equality $$\theta_n(t,N) = \eta_{n-1}(t,N) + \delta_{n-1}(t,N).$$   First, we prove the following analogue of the theorem for the sequence $\eta_n(t,N)$.

\begin{proposition}\label{mainprop1}
Let $N$ be a positive integer, and let $t > -\log N$.  
\begin{enumerate}
\item The sequence $$\eta_n(t,N) = \sum_{k = N}^n \frac{1}{t+\log(k+\sfrac{1}{2})}-\sum_{k = N}^n \frac{1}{H_k-\gamma+t}, \quad n \geq N$$ 
 is positive, strictly increasing, and bounded above.  Consequently, the limit
$$ \eta(t,N) =  \lim_{n \to \infty} \eta_n(t,N) > 0$$
exists. 
\item For all $n \geq N$, one has
\begin{align*}
\frac{1}{24}\int_{n+1}^\infty \frac{dx}{x^2 (t+\log x)^2}\leq  \eta(t,N)-\eta_{n-1}(t,N) = \eta(t,n)  \leq  \frac{1}{24}\int_n^\infty \frac{dx}{x^2 (t+\log x)^2}.
\end{align*}
\item  For all $n \geq N$, one has
\begin{align*}
&\frac{1}{24(n+1)(t+\log (n+1))^2} -\frac{1}{12(n+1)(t+\log(n+1))^3} \\
& \qquad < \eta(t,N) -\eta_{n-1}(t,N) <  \frac{1}{24n(t+\log n)^2}.
\end{align*}
Consequently, one also has
$$\eta(t,N) = \eta_{n-1}(t,N) + \frac{1+o(1)}{24n(\log n)^2} \ (n \to \infty)$$
and
$$\eta(t,N) = O\left( \frac{1}{N t^2}\right) \ (t \to \infty),$$
where the $O$ constant does not depend on $N$.
\end{enumerate}
\end{proposition}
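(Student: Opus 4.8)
The plan is to analyze $\eta_n(t,N)$ termwise, comparing the summand $\frac{1}{t+\log(k+\sfrac12)}$ with $\frac{1}{H_k-\gamma+t}$ by means of the sharp two-sided estimate (\ref{harmony}), which says $\frac{1}{24(k+1)^2} < H_k - \gamma - \log(k+\sfrac12) < \frac{1}{24(k+\sfrac12)^2}$ for all $k > -\sfrac12$. Writing $a_k = t + \log(k+\sfrac12)$ and $b_k = H_k - \gamma + t$, we have $b_k - a_k = H_k - \gamma - \log(k+\sfrac12) \in \bigl(\frac{1}{24(k+1)^2}, \frac{1}{24(k+\sfrac12)^2}\bigr)$, so in particular $b_k > a_k > 0$ for $k \geq N$ (using $t > -\log N$ and monotonicity of $\log$), and the $k$th term of $\eta_n(t,N)$ is $\frac{1}{a_k} - \frac{1}{b_k} = \frac{b_k - a_k}{a_k b_k} > 0$. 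This immediately gives positivity and strict monotonicity of the partial sums in part (1). For the upper bound in part (1), I would bound $\frac{b_k - a_k}{a_k b_k} < \frac{b_k - a_k}{a_k^2} < \frac{1}{24(k+\sfrac12)^2 (t+\log(k+\sfrac12))^2}$ and compare the tail sum with a convergent integral; the series $\sum_k \frac{1}{k^2(\log k)^2}$ converges, so $\eta(t,N)$ exists.

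For part (2), the key is that $\eta(t,n) = \eta(t,N) - \eta_{n-1}(t,N) = \sum_{k=n}^\infty \bigl(\frac{1}{a_k} - \frac{1}{b_k}\bigr)$, which follows from telescoping the definition (the terms for $k = N,\dots,n-1$ cancel). To bound this tail, note $\frac{1}{a_k} - \frac{1}{b_k} = \frac{b_k-a_k}{a_kb_k}$, and one wants to pin it between $\frac{1}{24}\cdot\frac{1}{(k+1)^2(t+\log(k+1))^2}$-type quantities (lower) and $\frac{1}{24}\cdot\frac{1}{k^2(t+\log k)^2}$-type quantities (upper), then sum. For the upper bound: $b_k - a_k < \frac{1}{24(k+\sfrac12)^2}$ and $a_k b_k > a_k^2 > (t+\log k)^2 \cdot$ (something $\geq 1$)—more carefully, since $b_k > a_k$ one has $a_k b_k > a_k^2$, but we actually want the cleaner denominator $k^2(t+\log x)^2$, so I would instead bound $a_k b_k > (t + \log k)^2$ crudely when this suffices, or keep $a_k = t+\log(k+\sfrac12)$ and absorb. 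The cleanest route: show $\frac{b_k-a_k}{a_kb_k} \leq \frac{1}{24}\int_{k}^{k+1}\frac{dx}{x^2(t+\log x)^2}$ is \emph{false} in general, so instead compare $\sum_{k=n}^\infty \frac{b_k-a_k}{a_kb_k}$ with $\frac1{24}\sum_{k=n}^\infty \frac{1}{k^2(t+\log k)^2}$ and then that sum with $\frac1{24}\int_{n-1}^\infty \frac{dx}{x^2(t+\log x)^2}$ or $\frac1{24}\int_n^\infty$ depending on monotonicity direction; the function $x \mapsto \frac{1}{x^2(t+\log x)^2}$ is decreasing for $x$ large (and for all $x\geq N$ here since $t > -\log N$ makes $t+\log x > 0$), so $\sum_{k=n}^\infty f(k) \leq \int_{n-1}^\infty f \le \int_n^\infty f + f(n)$ and $\sum_{k=n}^\infty f(k) \geq \int_n^\infty f$, which yields exactly the stated integral bounds with the index shift (using $f(k) \le$ value at $k$ vs.\ at $k+1$). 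For part (3), I would convert the integral bounds of part (2) into elementary closed-form bounds by integrating by parts once: $\int_n^\infty \frac{dx}{x^2(t+\log x)^2} = \frac{1}{n(t+\log n)^2} - 2\int_n^\infty \frac{dx}{x^2(t+\log x)^3}$, and the remaining integral is itself $O\bigl(\frac{1}{n(t+\log n)^3}\bigr)$, giving the explicit upper bound $\frac{1}{24n(t+\log n)^2}$ and a matching lower bound at index $n+1$ with the correction term $-\frac{1}{12(n+1)(t+\log(n+1))^3}$. The asymptotic $\eta(t,N) = \eta_{n-1}(t,N) + \frac{1+o(1)}{24n(\log n)^2}$ then follows by squeezing, and the $t\to\infty$ estimate $O(\frac{1}{Nt^2})$ follows by taking $n = N$ in part (2) and bounding $\int_N^\infty \frac{dx}{x^2(t+\log x)^2} \le \frac{1}{t^2}\int_N^\infty \frac{dx}{x^2} = \frac{1}{Nt^2}$ together with the elementary term $\frac{1}{24N(t+\log N)^2} = O(\frac{1}{Nt^2})$, uniformly in $N$.

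The main obstacle I anticipate is \textbf{bookkeeping the index shifts and the direction of the integral comparison} to land precisely on $\int_{n+1}^\infty$ for the lower bound in part (2) but $\int_n^\infty$ for the upper bound, while simultaneously keeping track of whether to use the $(k+1)$ or the $(k+\sfrac12)$ or the $k$ version of the denominator coming from (\ref{harmony}); the inequalities in (\ref{harmony}) naturally produce $\frac{1}{24(k+1)^2}$ on the low side and $\frac{1}{24(k+\sfrac12)^2}$ on the high side, and converting these to the clean $\frac{1}{24}\cdot\frac{1}{x^2(t+\log x)^2}$ integrand requires noting $a_k b_k \geq a_k^2 \geq (t+\log k)^2$ for the upper estimate and $a_kb_k \le b_k^2$ combined with a crude lower bound $b_k \le t + \log(k+1) + \text{small}$ for the lower estimate—so the lower bound in part (2) may require a slightly more careful argument, possibly bounding $\frac{b_k-a_k}{a_kb_k} \ge \frac{1}{24(k+1)^2 b_k^2}$ and then $b_k < t + \log(k+1)$ (which does \emph{not} hold) — hence I would instead use $b_k < t + \log(k+\sfrac32)$ or simply accept the integral from $n+1$, since $\sum_{k=n}^\infty \frac{1}{24(k+1)^2(t+\log(k+1))^2} = \sum_{j=n+1}^\infty \frac{1}{24j^2(t+\log j)^2} \ge \frac{1}{24}\int_{n+1}^\infty \frac{dx}{x^2(t+\log x)^2}$, which is exactly the claimed lower bound once one checks $\frac{b_k-a_k}{a_kb_k} \ge \frac{b_k-a_k}{b_k^2}$ is the wrong direction and the right move is $\frac{b_k-a_k}{a_kb_k} \ge \frac{1}{24(k+1)^2}\cdot\frac{1}{a_k b_k} \ge \frac{1}{24(k+1)^2(t+\log(k+1))^2}$ using $a_k < t+\log(k+1)$ and $b_k < t + \log(k+1)$ — and the latter, $H_k - \gamma < \log(k+1)$, does hold since $H_k - \gamma - \log(k+\sfrac12) < \frac{1}{24(k+\sfrac12)^2}$ and $\log(k+\sfrac12) + \frac{1}{24(k+\sfrac12)^2} < \log(k+1)$ for all $k \ge 1$. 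Everything else is routine calculus once these comparisons are fixed.
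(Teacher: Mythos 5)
Your overall strategy is the same as the paper's: use Eq.\ (\ref{harmony}) to get a termwise two-sided bound on $\frac{1}{t+\log(k+\sfrac12)}-\frac{1}{H_k-\gamma+t}$, sum, compare with an integral, and then (for part (3)) convert the integral to elementary form by repeated integration by parts, which is precisely the content of the paper's Lemma \ref{lambda}.

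There is, however, one genuine error in your handling of the upper bound in part (2). You assert that the termwise comparison $\frac{b_k-a_k}{a_kb_k}\leq\frac{1}{24}\int_k^{k+1}\frac{dx}{x^2(t+\log x)^2}$ ``is false in general,'' and on that basis abandon the midpoint-rule route in favor of a cruder comparison. In fact that termwise inequality is true: by (\ref{harmony}) and $a_kb_k>a_k^2$ one has $\frac{b_k-a_k}{a_kb_k}<\frac{1}{24(k+\sfrac12)^2(t+\log(k+\sfrac12))^2}$, and since $g(x)=\frac{1}{x^2(t+\log x)^2}$ is convex on $[N,\infty)$ (compute $g''>0$, or note it is a nonnegative combination of $x^{-4}(t+\log x)^{-j}$ for $j=2,3,4$), the midpoint rule gives $g(k+\sfrac12)\leq\int_k^{k+1}g$. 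Summing over $k\geq n$ yields exactly $\eta(t,n)\leq\frac{1}{24}\int_n^\infty\frac{dx}{x^2(t+\log x)^2}$, which is the upper bound the paper states. The replacement chain you propose instead, $\sum_{k=n}^\infty f(k)\leq\int_{n-1}^\infty f\leq\int_n^\infty f+f(n)$, has a direction error in the middle step: for decreasing $f$ one has $\int_{n-1}^n f\geq f(n)$, not $\leq$, so $\int_{n-1}^\infty f\geq\int_n^\infty f+f(n)$. Thus your substitute does not produce the stated $\int_n^\infty$ bound. Your lower bound via $\sum_{j\geq n+1}\frac{1}{24j^2(t+\log j)^2}\geq\frac{1}{24}\int_{n+1}^\infty\frac{dx}{x^2(t+\log x)^2}$, together with the verification $H_k-\gamma<\log(k+1)$, is correct and matches the paper. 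In short: reinstate the midpoint-rule/convexity observation you rejected, and part (2)'s upper bound follows cleanly; everything else in your sketch is sound and parallels the paper's proof.
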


\begin{proof}
Let $n \geq N$.  By Eq.\ (\ref{harmony}), for all $k > e^{-t}$, hence for all $k \geq N$, one has
\begin{align*}
0< \frac{1}{t+\log(k+\sfrac{1}{2})}- \frac{1}{H_k-\gamma+t}  & = \frac{ H_k - \gamma - \log (k + \sfrac{1}{2})}{(t+\log(k+\sfrac{1}{2}))(H_k-\gamma+t)}  \\
  & <  \frac{1}{24(k+\sfrac{1}{2})^2(t+\log(k+\sfrac{1}{2}))^2}.
\end{align*}
Therefore, the sequence $\eta_n(t,N)$ is positive and strictly increasing, and one has
\begin{align*}
 \eta_n(t,N)  <  \frac{1}{24}\sum_{k = N}^n \frac{1}{(k+\sfrac{1}{2})^2(t+\log(k+\sfrac{1}{2}))^2}  <  \frac{1}{24}\int_N^{n+1}  \frac{dx}{x^2(t+\log x)^2}.
\end{align*}
Similarly, one has
\begin{align*}
\eta_n(t,N)   >\frac{1}{24} \sum_{k = N}^{n} \frac{1}{(k+1)^2(t+\log (k+1))^2} > \int_{N+1}^{n+2}  \frac{dx}{24x^2(t+\log x)^2},
\end{align*}
and therefore
$$0 < \frac{1}{24}\int_{N+1}^{n+2} \frac{dx}{x^2 (t+\log x)^2} < \eta_n(t,N) < \frac{1}{24}\int_N^{n+1} \frac{dx}{x^2 (t+\log x)^2}.$$   Statement (1) follows.  Taking the limit as $n \to \infty$, we see that
$$\frac{1}{24}\int_{N+1}^\infty \frac{dx}{x^2 (t+\log x)^2} \leq  \eta(t,N) \leq \frac{1}{24}\int_N^\infty \frac{dx}{x^2 (t+\log x)^2}.$$
Statement (2) then follows from the inequality above  and the fact that
$$\eta(t, N)-\eta_{n-1}(t,N) = \eta(t,n)$$
for all $n \geq N$.  Finally, statement (3) follows from statement (2) and Lemma \ref{lambda} below.
\end{proof}

\begin{lemma}\label{lambda}
Let $t, r \in \RR$ with $r > 0$ and $t > -\log r$.   One has
$$\int_r^\infty \frac{dx}{x^2(t+\log x)^2} = e^t \li\left(\frac{1}{re^{t}}\right) + \frac{1}{r (t+\log r)} > 0.$$
Moreover, for every even positive integer $n$, one has
$$\sum_{k = 2}^{n+1} \frac{(-1)^k (k-1)!}{r(t+\log r)^k} <\int_r^\infty \frac{dx}{x^2(t+\log x)^2}<  \sum_{k = 2}^{n} \frac{(-1)^k (k-1)!}{r(t+\log r)^k}.$$
In particular, one has
$$\frac{1}{r(t+\log r)^2} -\frac{2}{r(t+\log r)^3}< \int_r^\infty \frac{dx}{x^2(t+\log x)^2}<   \frac{1}{r(t+\log r)^2}.$$
\end{lemma}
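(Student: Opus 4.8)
The plan is to prove the closed form first and then read off the alternating inequalities from a single integration-by-parts recursion; neither step is hard, so the write-up is mostly bookkeeping.

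For the identity I would substitute $u = t+\log x$, so that $\frac{dx}{x} = du$ and $\frac{dx}{x^2} = e^{t-u}\,du$; since $t > -\log r$ the new lower limit $a := t+\log r$ is positive, and
$$\int_r^\infty \frac{dx}{x^2(t+\log x)^2} = e^t\int_a^\infty \frac{e^{-u}}{u^2}\,du.$$
One integration by parts (integrating $u^{-2}$, differentiating $e^{-u}$, the boundary term at $\infty$ vanishing) gives $\int_a^\infty \frac{e^{-u}}{u^2}\,du = \frac{e^{-a}}{a} - \int_a^\infty \frac{e^{-u}}{u}\,du = \frac{e^{-a}}{a} + \Ei(-a)$, using $\int_a^\infty \frac{e^{-u}}{u}\,du = -\Ei(-a)$ for $a>0$. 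Because $t > -\log r$ forces $re^t > 1$, the argument $w = 1/(re^t)$ lies in $(0,1)$, where the standard relation $\li(w) = \Ei(\log w)$ holds; since $\log w = -t-\log r = -a$ this reads $\Ei(-a) = \li\!\big(1/(re^t)\big)$. Substituting back and simplifying $e^t e^{-a}/a = e^t e^{-t-\log r}/(t+\log r) = 1/\big(r(t+\log r)\big)$ yields the claimed equality, and the value is positive because the integrand is. (As a cross-check that bypasses the special-function conventions, one can instead differentiate the right-hand side in $r$, obtain $-1/\big(r^2(t+\log r)^2\big)$, note that the left-hand side has the same $r$-derivative by the fundamental theorem of calculus, and observe that both sides tend to $0$ as $r\to\infty$.)

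For the alternating bounds, set $I_m := \int_r^\infty \frac{dx}{x^2(t+\log x)^m}$ for integers $m\geq 2$, which converges since the integrand is continuous on $[r,\infty)$ and is $O(x^{-2})$. Integrating $x^{-2}$ and differentiating $(t+\log x)^{-m}$, and using that $\frac{1}{x(t+\log x)^m}\to 0$ as $x\to\infty$, one obtains the recursion
$$I_m = \frac{1}{r(t+\log r)^m} - m\,I_{m+1}.$$
Iterating it and proving by induction on $m$ that
$$I_2 = \sum_{k=2}^{m} \frac{(-1)^k (k-1)!}{r(t+\log r)^k} + (-1)^{m-1} m!\, I_{m+1}\qquad(m\geq 2)$$
finishes the job: the base case $m=2$ is the recursion itself, and the inductive step substitutes for $I_{m+1}$ via the recursion and recognizes $(-1)^{m-1}m!/\big(r(t+\log r)^{m+1}\big)$ as the $k=m+1$ term of the sum. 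Since $I_{m+1}>0$, the remainder is negative when $m$ is even and positive when $m$ is odd, so for any even positive integer $n$ the identity with $m=n$ gives the upper bound, the identity with $m=n+1$ gives the lower bound, and the final ``in particular'' statement is the case $n=2$.

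There is no real obstacle. The only points needing care are the bookkeeping around the $\li$/$\Ei$ conventions and the check that $1/(re^t)\in(0,1)$ (so that $\li(w)=\Ei(\log w)$ applies without a principal-value caveat), together with the routine justifications that the improper integrals converge and that the boundary terms in the integrations by parts vanish at $\infty$ --- all immediate from the standing hypothesis $t>-\log r$. If one prefers to avoid the special-function identities altogether, the differentiation-in-$r$ argument proves the closed form directly, and the recursion argument still yields the inequalities.
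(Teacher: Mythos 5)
Your proof is correct, and it takes a genuinely different route from the paper on the second part. The paper disposes of the closed form with the single remark ``easily verified by integration by parts,'' then derives the alternating inequalities by quoting the known asymptotic-series bounds
\[
-\sum_{k=0}^{n}\frac{(-1)^k k!\,x}{(\log x)^{k+1}} < \li(x) < -\sum_{k=0}^{n-1}\frac{(-1)^k k!\,x}{(\log x)^{k+1}}, \qquad 0<x<1,\ n\ \text{even},
\]
evaluated at $x=1/(re^t)$, and feeding them back into the closed form. You instead prove the inequalities directly from the recursion $I_m = \tfrac{1}{r(t+\log r)^m} - mI_{m+1}$ with $I_{m+1}>0$, which is self-contained and never uses the closed-form identity at all; your argument in effect reproves the cited $\li$-bounds rather than invoking them. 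For the identity itself you give more detail than the paper does, via the substitution $u=t+\log x$, one integration by parts, and the relations $\int_a^\infty e^{-u}/u\,du = -\Ei(-a)$ and $\li(w)=\Ei(\log w)$ for $0<w<1$; the paper's ``integration by parts'' in $x$ directly (integrating $x^{-2}$, differentiating $(t+\log x)^{-2}$, and recognizing the remaining $\int_r^\infty \tfrac{dx}{x^2(t+\log x)}$ as $-e^t\li(1/(re^t))$ after substituting $x\mapsto 1/(xe^t)$) lands in the same place. Net effect: your version trades one citation for a two-line induction, which makes the lemma independent of outside facts about $\li$; the paper's version is shorter on the page but leans on a standard result.
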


\begin{proof}
 The exact expression for the integral is easily verified by integration by parts.  Since it is known that
$$-\sum_{k = 0}^{n} \frac{(-1)^k k! x}{(\log x)^{k+1}} < \li(x)  <   -\sum_{k = 0}^{n-1} \frac{(-1)^k k!x}{(\log x)^{k+1}}$$
for all $0 < x < 1$ and all even positive integers $n$, letting $x = \frac{1}{re^{t}}$, we see that
$$-\sum_{k = 1}^{n} \frac{(-1)^k k!}{r(t+\log r)^{k+1}} <e^t \li\left(\frac{1}{re^{t}}\right) + \frac{1}{r (t+\log r)} <   -\sum_{k = 1}^{n-1} \frac{(-1)^k k!}{r(t+\log r)^{k+1}}.$$
The lemma follows.
\end{proof}

The second and final step in the proof of Theorem \ref{mainprop} is to prove the following analogue of the theorem for the sequence $\delta_n(t,N)$.

\begin{proposition}\label{mainprop2}
Let $N$ be a positive integer, and let $t > -\log N$.  
\begin{enumerate}
\item The sequence $$ \delta_n(t,N) =   \int_N^{n+1} \frac{dx}{t+\log x} - \sum_{k = N}^n \frac{1}{t+\log(k+\sfrac{1}{2})}, \quad n \geq N$$
is positive, strictly increasing, and bounded above.  Consequently, the limit
$$ \delta(t,N) =  \lim_{n \to \infty} \delta_n(t,N) > 0$$
exists.
\item For all $n \geq N$, one has
\begin{align*}
& \frac{1}{24(n+1)(t+\log (n+1))^2} \\ &  \qquad \leq \delta(t,N)-\delta_{n-1}(t,N)  = \delta(t,n) \\
  & \qquad  \leq \frac{1}{24(n+\sfrac{1}{2})(t+\log (n+\sfrac{1}{2}))^2} +\frac{1}{24n^2(t+\log n)^2}+  \frac{1}{12n^2(t+\log n)^3}.
\end{align*}
\item One has
$$\delta(t,N) = \delta_{n-1}(t,N) + \frac{1+o(1)}{24n(\log n)^2} \ (n \to \infty)$$
and
$$\delta(t,N) = O\left( \frac{1}{N t^2}\right) \ (t \to \infty),$$
where the $O$ constant does not depend on $N$.
\end{enumerate}
\end{proposition}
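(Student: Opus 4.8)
The plan is to mirror the structure of the proof of Proposition \ref{mainprop1}, replacing the elementary bound from Eq.\ (\ref{harmony}) by the midpoint-rule error estimate captured in Proposition \ref{harm}. The starting point is the observation that, on each unit interval $[k,k+1]$, the quantity $\int_k^{k+1}\frac{dx}{t+\log x}-\frac{1}{t+\log(k+\sfrac{1}{2})}$ is exactly a midpoint-rule remainder for the convex, decreasing integrand $\frac{1}{t+\log x}$, so it is positive; summing from $k=N$ to $k=n$ shows that $\delta_n(t,N)$ is positive and strictly increasing. To get the upper bound for statement (1), I would apply Proposition \ref{harm} (with $x=0$, or directly its one-variable consequence) to rewrite $\log(k+\sfrac{1}{2})$ in terms of $H_k-\gamma$ up to an error of order $\frac{1}{24k^2}$; combined with the already-established boundedness of $\eta(t,N)$ from Proposition \ref{mainprop1} and of $\theta(t,N)$-type sums, this forces $\delta_n(t,N)$ to be bounded above, whence the limit $\delta(t,N)>0$ exists.

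For statement (2), the key identity is $\delta(t,N)-\delta_{n-1}(t,N)=\delta(t,n)$, which holds for the same telescoping reason as in Proposition \ref{mainprop1}; so it suffices to bound the tail $\delta(t,n)=\sum_{k\geq n}\big(\int_k^{k+1}\frac{dx}{t+\log x}-\frac{1}{t+\log(k+\sfrac{1}{2})}\big)$. Each summand is a midpoint remainder, which by the standard second-order midpoint error formula equals $\frac{1}{24}f''(\xi_k)$ for some $\xi_k\in(k,k+1)$, where $f(x)=\frac{1}{t+\log x}$ and $f''(x)=\frac{1}{x^2(t+\log x)^2}+\frac{2}{x^2(t+\log x)^3}$ (up to sign bookkeeping — I would verify $f''>0$ on the relevant range, using $t>-\log N$). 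Monotonicity of $f''$ on $[n,\infty)$ then yields $\frac{1}{24}f''(k+1)\leq \int_k^{k+1}\frac{dx}{t+\log x}-\frac{1}{t+\log(k+\sfrac{1}{2})}\leq\frac{1}{24}f''(k)$; summing and comparing the resulting sums with integrals $\int f''$, which can be evaluated or bounded termwise, produces the stated two-sided bound. The lower bound $\frac{1}{24(n+1)(t+\log(n+1))^2}$ comes from keeping just the $k=n$ term of the lower estimate and discarding the positive remainder; the upper bound is assembled from an integral comparison for the $\frac{1}{x^2(t+\log x)^2}$ piece (handled via Lemma \ref{lambda}, giving the $\frac{1}{24(n+\sfrac{1}{2})(t+\log(n+\sfrac{1}{2}))^2}$ and $\frac{1}{24n^2(t+\log n)^2}$ terms after an Euler--Maclaurin-style sum-to-integral step) plus a crude bound $\sum_{k\geq n}\frac{1}{12k^2(t+\log k)^3}<\frac{1}{12n^2(t+\log n)^3}$ for the cubic piece.

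Statement (3) is then immediate: the asymptotic $\delta(t,N)=\delta_{n-1}(t,N)+\frac{1+o(1)}{24n(\log n)^2}$ follows from squeezing with the bounds in (2), since both sides of the sandwich are $\frac{1+o(1)}{24n(\log n)^2}$ as $n\to\infty$ with $t$ fixed; and the estimate $\delta(t,N)=O\big(\frac{1}{Nt^2}\big)$ as $t\to\infty$ follows from taking $n=N$ in (2) and noting every term on the right is $O\big(\frac{1}{Nt^2}\big)$ with an absolute implied constant, using $t+\log N\geq t$ for $N\geq1$. The main obstacle I anticipate is the bookkeeping in statement (2): getting the precise constants (the split into the $(n+\sfrac12)$ term versus the $n^2$ terms, and the exact coefficients $\frac{1}{24}$ and $\frac{1}{12}$) requires care in choosing where to compare the midpoint-error sum against an integral versus where to just bound by the leading term, and in tracking the $f''$ expansion; the positivity/monotonicity of $f''$ on $[N,\infty)$ under the hypothesis $t>-\log N$ also needs a short but genuine check, since $t+\log x$ can be small and negative near the left endpoint if $t<0$.
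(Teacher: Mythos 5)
Your overall strategy---treat each summand of $\delta_n$ as a midpoint-rule remainder for $f(x)=\frac{1}{t+\log x}$, bound it between $\frac{1}{24}f''(k+1)$ and $\frac{1}{24}f''(k)$, and then compare the resulting sums with integrals---is exactly the paper's approach. But the specific assembly you sketch has two genuine problems.

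First, the boundedness argument you give for statement (1) is circular: you propose to deduce boundedness of $\delta_n(t,N)$ from boundedness of $\eta(t,N)$ together with boundedness of ``$\theta(t,N)$-type sums,'' but boundedness of $\theta_n$ is precisely what is later deduced \emph{from} Propositions \ref{mainprop1} and \ref{mainprop2} via the identity $\theta_n = \eta_{n-1}+\delta_{n-1}$. You cannot assume it here. (This is a removable defect, since the integral comparison you later describe for statement (2) yields boundedness directly---but as written it is a gap.)

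Second, your plan for the explicit bounds in (2) does not produce the stated constants. Keeping ``just the $k=n$ term of the lower estimate'' gives $\frac{1}{24}f''(n+1)=\frac{1}{24(n+1)^2(t+\log(n+1))^2}+\frac{2}{24(n+1)^2(t+\log(n+1))^3}$, which has $(n+1)^2$ in the denominator, not the $(n+1)$ that the stated lower bound $\frac{1}{24(n+1)(t+\log(n+1))^2}$ requires; the correct lower bound comes from the integral comparison $\sum_{k\ge n} f''(k+1) > \int_{n+1}^\infty f''(x)\,dx = -f'(n+1)$, not from a single term. Similarly, the proposed crude bound $\sum_{k\ge n}\frac{1}{12k^2(t+\log k)^3}<\frac{1}{12n^2(t+\log n)^3}$ is false: the tail of a series with terms decaying like $\frac{1}{k^2(\log k)^3}$ is of order $\frac{1}{n(\log n)^3}$, which is larger than the claimed bound by a factor of $n$. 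The clean device you are missing is that $\int f''(x)\,dx=f'(x)$ \emph{telescopes}, so the comparison $\sum_{k\ge n}f''(k) < f''(n)+\int_{n+1/2}^\infty f''(x)\,dx = f''(n) - f'(n+\tfrac12)$ collapses the entire (quadratic-plus-cubic) integrand into the single boundary term $\frac{1}{(n+1/2)(t+\log(n+1/2))^2}$, and the remaining $\frac{1}{24}f''(n)$ supplies exactly the two $n^{-2}$ terms in the statement. There is no need to split the integrand or to invoke Lemma \ref{lambda} here (that lemma is used in the proof of Proposition \ref{mainprop1}, not this one).
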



\begin{proof}
Let $f(x) = \frac{1}{t+\log x}$, which is positive, decreasing, and concave up on $[N, \infty)$.  Then, also, $f'(x) = -\frac{1}{x(t+\log x)^2}$ is negative, increasing, and concave down on $[N,\infty)$, while $f''(x) = \frac{1}{x^2(t+\log x)^2} + \frac{2}{x^2(t+\log x)^3}$ is positive, decreasing, and concave up on $[N,\infty)$. In particular, by the well-known expression for the error from  the midpoint rule, one has
$$0< \frac{1}{24}f''(k+1) < \int_k^{k+1}  \frac{dx}{t+\log x} - \frac{1}{t+\log (k+\sfrac{1}{2})} < \frac{1}{24}f''(k)$$
and therefore
$$ \frac{1}{24} \sum_{k = N}^n  f''(k+1) <  \delta_n(t,N)  < \frac{1}{24} \sum_{k = N}^n  f''(k).$$
Moreover, one has
\begin{align*}
\sum_{k = N}^n f''(k) <  f''(N)+ \int_{N+\sfrac{1}{2}}^{n+\sfrac{1}{2}} f''(x) \, dx  =  f''(N) + f'(n+\sfrac{1}{2})-f'(N+\sfrac{1}{2}),
\end{align*}
while
\begin{align*}
\sum_{k = N}^n f''(k+1)  >  \int_{N+1}^{n+2} f''(x) \, dx   =  f'(n+2)-f'(N+1).
\end{align*}
It follows that, for all $n \geq N$, one has
\begin{align*}
&  \left. \left(\frac{1}{24x(t+\log x)^2} \right)\right|^{N+1}_{n+2}  \\   &  \qquad <\delta_n(t,N) \\  & \qquad <  \left. \left(\frac{1}{24x(t+\log x)^2} \right)\right|^{N+\sfrac{1}{2}}_{n+\sfrac{1}{2}}   + \frac{1}{24N^2(t+\log N)^2}  +  \frac{1}{12N^2(t+\log N)^3}.
\end{align*}
\item 
Statement (1) follows.  Statement (2) follows by taking a limit of the inequalities above as $n \to \infty$, and, finally, statement (3) follows immediately from (2).
\end{proof}

Theorem \ref{mainprop}, now, follows immediately from  Propositions \ref{mainprop1} and \ref{mainprop2}.  As a corollary, we obtain the following.

\begin{corollary}\label{monocor}
Let $N$ be a positive integer, and let $t > -\log N$.  One has
$$\theta(t,N) = \eta(t,N) + \delta(t,N).$$  Moreover, for all $n \geq N$, one has
\begin{align*}
0 \leq \theta_{n}(t,N) < \theta(t,N)  <  \theta_{n}(t,N) & + \frac{1}{24n(t+\log n)^2} +\frac{1}{24(n+\sfrac{1}{2})(t+\log (n+\sfrac{1}{2}))^2} \\
  & + \frac{1}{24n^2(t+\log n)^2}  +  \frac{1}{12n^2(t+\log n)^3}.
\end{align*}
\end{corollary}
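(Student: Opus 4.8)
The plan is to derive both assertions directly from the decomposition $\theta_n(t,N) = \eta_{n-1}(t,N) + \delta_{n-1}(t,N)$ established just before Proposition \ref{mainprop1}, together with Theorem \ref{mainprop} and Lemma \ref{lambda}; the corollary is essentially a repackaging of Theorem \ref{mainprop} phrased in terms of $\theta_n(t,N)$ rather than $\theta(t,n)$.

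First I would prove the additivity of the limits. By Propositions \ref{mainprop1}(1) and \ref{mainprop2}(1) the sequences $\eta_n(t,N)$ and $\delta_n(t,N)$ are each positive, strictly increasing, and bounded above, hence convergent; in particular their shifts satisfy $\eta_{n-1}(t,N) \to \eta(t,N)$ and $\delta_{n-1}(t,N) \to \delta(t,N)$ as $n \to \infty$. Letting $n \to \infty$ in $\theta_n(t,N) = \eta_{n-1}(t,N) + \delta_{n-1}(t,N)$ and using $\theta_n(t,N) \to \theta(t,N)$ from Theorem \ref{mainprop}(1) then yields $\theta(t,N) = \eta(t,N) + \delta(t,N)$.

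For the displayed chain of inequalities, the leftmost part $0 \leq \theta_n(t,N) < \theta(t,N)$ is exactly Theorem \ref{mainprop}(1): since $\theta_N(t,N) = 0$ (empty integral and empty sum) and the sequence is strictly increasing and bounded above, each term is nonnegative and strictly below the limit. For the rightmost part I would invoke the identity $\theta(t,N) - \theta_n(t,N) = \theta(t,n)$ and the upper estimate for $\theta(t,n)$ in Theorem \ref{mainprop}(2), and then replace the improper integral $\int_n^\infty \frac{dx}{24x^2(t+\log x)^2}$ by the elementary bound $\frac{1}{24n(t+\log n)^2}$ from Lemma \ref{lambda} applied with $r = n$ (legitimate because $t > -\log N \geq -\log n$, so $t + \log n > 0$). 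Substituting gives precisely the stated upper bound for $\theta(t,N)$, with strict inequality coming from the strict inequality in Lemma \ref{lambda}.

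There is no real obstacle here; the only points needing a moment's care are that the index shift $n \mapsto n-1$ in the decomposition is harmless when passing to limits (a convergent sequence and any shift of it have the same limit), and that Lemma \ref{lambda} is applicable at $r = n$, which the hypothesis $t > -\log N$ together with $n \geq N$ guarantees.
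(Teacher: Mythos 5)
Your proposal is correct and follows the same route the paper implicitly takes: it packages the decomposition $\theta_n = \eta_{n-1} + \delta_{n-1}$, Theorem \ref{mainprop}(1)--(2), and the elementary bound $\int_n^\infty \frac{dx}{x^2(t+\log x)^2} < \frac{1}{n(t+\log n)^2}$ from Lemma \ref{lambda} (applicable at $r=n$ since $n\geq N$ gives $t > -\log N \geq -\log n$). Your observation that $\theta_N(t,N)=0$ together with strict monotonicity yields the leftmost inequalities is exactly the intended reasoning, so nothing is missing.
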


A real function $f(x)$ on an interval $I \subseteq \RR$ is said to be {\emph strictly totally monotone on $I$} if $f(x)$ is continuous on $I$, infinitely differentiable on the interior of $I$, and satisfies $(-1)^k \frac{d^k}{dx^k} f(x) > 0$ for all $x$ in the interior of $I$ and for all nonnegative integers $k$. 

\begin{proposition}\label{monotone}
Let $N$ be a positive integer.   The function $\theta(t,N)$ strictly totally monotone on $(-\log N, \infty)$ with
\begin{align*}
(-1)^k \frac{d^k}{dt^k} \theta(t,N) & = (-1)^k \lim_{n \to \infty} \frac{d^k}{dt^k}\theta_n(t,N) \\ &=  k!\lim_{n \to \infty} \left(\int_N^n \frac{dx}{(t+\log x)^{k+1}} - \sum_{k = N}^{n-1}\frac{1}{(H_k-\gamma+t)^{k+1}} \right) > 0
\end{align*}
for all nonnegative integers $k$ and all $t \in (-\log N, \infty)$.
\end{proposition}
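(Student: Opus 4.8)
The plan is to reduce the statement to two facts: that $\theta(t,N)$ may be differentiated in $t$ arbitrarily often by passing the derivative inside the limit defining it, and that each of the resulting ``derived'' limits has the sign $(-1)^k$. To set this up I would introduce, for each nonnegative integer $k$, the sequence
$$\theta_n^{(k)}(t,N) = \int_N^n \frac{dx}{(t+\log x)^{k+1}} - \sum_{j=N}^{n-1}\frac{1}{(H_j-\gamma+t)^{k+1}}, \qquad n \geq N,$$
so that $\theta_n^{(0)}(t,N)=\theta_n(t,N)$. Differentiating the finite sum term by term, and differentiating $\int_N^n (t+\log x)^{-(k+1)}\,dx$ under the integral sign---legitimate since on any compact subinterval $[a,b]\subseteq(-\log N,\infty)$ one has $t+\log x\geq a+\log N>0$ for all $x\in[N,n]$---one gets the identities $\frac{d}{dt}\theta_n^{(k)}(t,N)=-(k+1)\,\theta_n^{(k+1)}(t,N)$, and hence $\frac{d^k}{dt^k}\theta_n(t,N)=(-1)^k k!\,\theta_n^{(k)}(t,N)$.

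Granting for the moment that $\theta_n^{(k)}(t,N)$ converges as $n\to\infty$ to a positive limit $\theta^{(k)}(t,N)$, uniformly for $t$ in compact subsets of $(-\log N,\infty)$, the proposition follows from the standard theorem on differentiating a limit of a sequence: $\theta_n^{(0)}\to\theta$ pointwise while $\frac{d}{dt}\theta_n^{(0)}=-\theta_n^{(1)}\to-\theta^{(1)}$ uniformly on compacts, so $\theta(t,N)$ is differentiable with $\frac{d}{dt}\theta(t,N)=-\theta^{(1)}(t,N)$; iterating this (each $\theta_n^{(k)}\to\theta^{(k)}$ pointwise, with derivative sequence $-(k+1)\theta_n^{(k+1)}\to-(k+1)\theta^{(k+1)}$ uniformly on compacts) shows by induction that $\theta(t,N)$ is infinitely differentiable on $(-\log N,\infty)$ with $\frac{d^k}{dt^k}\theta(t,N)=(-1)^k k!\,\theta^{(k)}(t,N)=(-1)^k\lim_{n\to\infty}\frac{d^k}{dt^k}\theta_n(t,N)$. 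Together with $\theta^{(k)}(t,N)>0$ this gives $(-1)^k\frac{d^k}{dt^k}\theta(t,N)>0$, and since $(-\log N,\infty)$ is open continuity is automatic, so this is precisely the asserted strict total monotonicity and the displayed derivative formula.

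To supply the granted facts I would rerun the $k=0$ analysis of Propositions \ref{mainprop1} and \ref{mainprop2} with the exponent $1$ replaced by $k+1$, via the decomposition $\theta_n^{(k)}(t,N)=\eta_{n-1}^{(k)}(t,N)+\delta_{n-1}^{(k)}(t,N)$, where $\eta_n^{(k)}(t,N)=\sum_{j=N}^n\bigl((t+\log(j+\sfrac12))^{-(k+1)}-(H_j-\gamma+t)^{-(k+1)}\bigr)$ and $\delta_n^{(k)}(t,N)=\int_N^{n+1}(t+\log x)^{-(k+1)}\,dx-\sum_{j=N}^n(t+\log(j+\sfrac12))^{-(k+1)}$. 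For $\eta^{(k)}$: each summand is positive because $0<t+\log(j+\sfrac12)<H_j-\gamma+t$ by the lower bound in Eq.\ (\ref{harmony}), which yields monotonicity, while the mean value theorem applied to $u\mapsto u^{-(k+1)}$, together with the upper bound in Eq.\ (\ref{harmony}), bounds the $j$th summand above by $\frac{k+1}{24(j+\sfrac12)^2(t+\log(j+\sfrac12))^{k+2}}$, which is summable; hence $\eta^{(k)}(t,N)=\lim_n\eta_n^{(k)}(t,N)$ exists and is positive. For $\delta^{(k)}$: the function $f(x)=(t+\log x)^{-(k+1)}$ is positive, decreasing, and convex on $[N,\infty)$---directly, $f''(x)=\frac{k+1}{x^2(t+\log x)^{k+2}}+\frac{(k+1)(k+2)}{x^2(t+\log x)^{k+3}}>0$, and $f''$ is decreasing there---so each midpoint-rule error $\int_j^{j+1}f(x)\,dx-f(j+\sfrac12)$ is positive (monotonicity again) and at most $\frac1{24}f''(j)$, so $\delta_n^{(k)}(t,N)\leq\frac1{24}\sum_{j=N}^n f''(j)\leq\frac1{24}\bigl(f''(N)-f'(N)\bigr)<\infty$, and $\delta^{(k)}(t,N)=\lim_n\delta_n^{(k)}(t,N)$ exists and is positive. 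Therefore $\theta^{(k)}(t,N)=\eta^{(k)}(t,N)+\delta^{(k)}(t,N)>0$. For the uniformity I would combine the telescoping identity $\theta^{(k)}(t,N)-\theta_n^{(k)}(t,N)=\theta^{(k)}(t,n)$ with the same estimates restricted to a compact $[a,b]\subseteq(-\log N,\infty)$, on which $t+\log x\geq c:=a+\log N>0$ for $x\geq N$: the $\eta^{(k)}$ and $\delta^{(k)}$ bounds then give $0<\theta^{(k)}(t,N)-\theta_n^{(k)}(t,N)=O(1/n)$ with an $O$-constant depending only on $k$ and $c$, i.e.\ exactly the local uniform convergence used above.

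The step I expect to be the main obstacle is the interchange of $\lim_n$ with $\frac{d^k}{dt^k}$: the argument hinges on upgrading the easy pointwise convergence $\theta_n^{(k)}\to\theta^{(k)}$ to \textit{locally uniform} convergence, which is why I would keep explicit, $t$-uniform tail bounds throughout the $\eta^{(k)}$ and $\delta^{(k)}$ estimates rather than settle for bare convergence. The rest---positivity, monotonicity, and boundedness of $\eta_n^{(k)}$ and $\delta_n^{(k)}$---is a routine transcription of the $k=0$ arguments already carried out in the excerpt.
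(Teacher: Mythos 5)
Your proposal is correct and takes essentially the same approach as the paper: differentiate the defining sequence $\theta_n(t,N)$ term by term to obtain $(-1)^k \frac{d^k}{dt^k}\theta_n(t,N) = k!\,\theta_n^{(k)}(t,N)$, establish positivity and locally uniform convergence of $\theta_n^{(k)}$ by rerunning the $\eta$/$\delta$ decomposition and midpoint-rule estimates of Propositions \ref{mainprop1} and \ref{mainprop2} with the exponent raised to $k+1$, and then apply the standard theorem on differentiating a uniformly convergent sequence. The paper's proof is more telegraphic (it does $k=1$ and says ``the same analysis applies more generally''); your write-up supplies exactly the details the paper leaves to the reader.
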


\begin{proof}
Because the sums $\sum_{n = N}^\infty \frac{1}{n(t+\log n)^2}$, $\sum_{n = N}^\infty \frac{1}{n^2(t+\log n)^2}$,   and $\sum_{n = N}^\infty \frac{1}{n^2(t+\log n)^3}$ converge for all  $t \in (-\log N, \infty)$ and are bounded on compact subsets of $(-\log N, \infty)$, Corollary \ref{monocor} implies that the convergence of $\theta_n(t,N)$ to $\theta(t,N)$ is uniform on compact subsets  of $(-\log N, \infty)$.  Therefore, the function $\theta(t,N)$ is continuous on $(-\log N, \infty)$.
In fact, the following argument shows that $\theta(t,N)$ is differentiable, with negative derivative, on $(-\log N, \infty)$.
First, note that 
$$\frac{d}{dt}\theta_n(t,N) = - \int_N^n \frac{dx}{(t+\log x)^2} + \sum_{k = N}^{n-1}\frac{1}{(H_k-\gamma+t)^2},$$
and then a straightforward repetition of our argument for $\theta_n(t,N)$ shows that, just as one has $\theta_n(t,N) > 0$ because
$$ \int_N^n \frac{dx}{t+\log x} > \sum_{k = N}^{n-1}\frac{1}{\log (k + \sfrac{1}{2})+t} >  \sum_{k = N}^{n-1}\frac{1}{H_k-\gamma+t},$$ one has
$\frac{d}{dt}\theta_n(t,N) < 0$ because $$ \int_N^n \frac{dx}{(t+\log x)^2} > \sum_{k = N}^{n-1}\frac{1}{(\log (k + \sfrac{1}{2})+t)^2} >  \sum_{k = N}^{n-1}\frac{1}{(H_k-\gamma+t)^2}.$$
One can then bound the error terms as with $\theta(t,N)$ and show that $\frac{d}{dt}\theta_n(t,N)$ converges uniformly on compact subsets of $(-\log N, \infty)$ to a limit
$$f(t,N) = \lim_{n \to \infty} \frac{d}{dt}\theta_n(t,N)$$
that is negative for all $t$.  It follows that $\theta(t, N)$ is differentiable on $(-\log N, \infty)$ with derivative $f(t,N)$.   The same analysis applies more generally to the expression
\begin{align*}
(-1)^k \frac{d^k}{dt^k}\theta_n(t,N) = k!\left(\int_N^n \frac{dx}{(t+\log x)^{k+1}} - \sum_{k = N}^{n-1}\frac{1}{(H_k-\gamma+t)^{k+1}} \right) > 0,
\end{align*}
for all positive integers $k$ and $n \geq N$.
\end{proof}


\begin{definition}\label{maindef}
Let $t, r \in \RR$ with $r > e^{-t}$.  Let
$$\beta_x(t,r) = \frac{\li (e^t x)}{e^t}  -\sum_{r \leq   k < x}   \frac{1}{H_k-\gamma+t}$$
for all $x \geq r$, 
and let
$$\beta(t,r) =  \lim_{n \to \infty} \beta_n(t,r) =  \lim_{x \to \infty} \beta_x(t,r).$$
\end{definition}

Let $t, r \in \RR$ with $r > e^{-t}$.   Since
 $$\beta(t,r) = \beta(t,\lceil r \rceil) =  \frac{\li(e^t \lceil r \rceil)}{e^t}+ \theta(t,\lceil r \rceil)$$
and
 $$\beta(t,r)-\beta_n(t,r) =\beta(t,\lceil r \rceil)- \beta_n(t,\lceil r \rceil) =  \theta(t,\lceil r \rceil) -  \theta_n(t,\lceil r \rceil) = \theta(t,n)$$
for all $n \geq \lceil r \rceil$, by Theorem \ref{mainprop} one has the following.

\begin{corollary}\label{maincor}
Let $t, r \in \RR$ with $r > e^{-t}$.  For all $n  \geq \lceil r \rceil$, one has
\begin{align*}
& \int_{n+1}^{\infty} \frac{dx}{24x^2 (t+\log x)^2} +
\frac{1}{24(n+1)(t+\log (n+1))^2} \\ & \qquad \leq \beta(t,r) - \beta_n(t,r)   \\ & \qquad \leq \int_n^\infty \frac{dx}{24x^2 (t+\log x)^2}  +\frac{1}{24(n+\sfrac{1}{2})(t+\log (n+\sfrac{1}{2}))^2} \\ & \qquad \qquad  + \frac{1}{24n^2(t+\log n)^2}  +  \frac{1}{12n^2(t+\log n)^3}.
\end{align*}
Moreover, one has
$$\beta(t,r) = \beta_{n}(t,r) + \frac{1+o(1)}{12n(\log n)^2} \ (n \to \infty).$$
\end{corollary}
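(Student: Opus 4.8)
The statement is a short corollary of Theorem \ref{mainprop}, so the plan is essentially bookkeeping: unwind Definition \ref{maindef} to identify $\beta(t,r)-\beta_n(t,r)$ with the already-analyzed quantity $\theta(t,N)-\theta_n(t,N)$. First I would reduce to the case in which $r$ is a positive integer. Because the sum $\sum_{r\le k<x}$ ranges over integers and no integer lies in $[r,\lceil r\rceil)$, one has $\beta_x(t,r)=\beta_x(t,N)$ with $N=\lceil r\rceil$, hence $\beta(t,r)=\beta(t,N)$ and $\beta_n(t,r)=\beta_n(t,N)$; moreover $N\ge r>e^{-t}$, i.e.\ $t>-\log N$, so Theorem \ref{mainprop} is available with this $N$. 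Next I would rewrite the logarithmic integral: from $\li(y)=\int_\mu^y \frac{du}{\log u}$ and the substitution $u=e^t v$ one obtains
$$\frac{\li(e^t x)}{e^t}=\int_{\mu e^{-t}}^x \frac{dv}{t+\log v}=\frac{\li(e^t N)}{e^t}+\int_N^x \frac{dv}{t+\log v}$$
for every $x\ge N$ --- the splitting is legitimate because $\mu e^{-t}$, $N$, and every $x\ge N$ all exceed $e^{-t}$, so the path of integration never meets the pole of $\frac{1}{t+\log v}$ --- and therefore
$$\beta_x(t,r)=\frac{\li(e^t N)}{e^t}+\left(\int_N^x \frac{dv}{t+\log v}-\sum_{N\le k<x}\frac{1}{H_k-\gamma+t}\right).$$

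Evaluating at an integer $x=n\ge N$, the parenthesized expression is exactly $\theta_n(t,N)$, so $\beta_n(t,r)=\frac{\li(e^t N)}{e^t}+\theta_n(t,N)$; letting $n\to\infty$ and using that $\theta_n(t,N)\to\theta(t,N)$ by Theorem \ref{mainprop}(1) gives $\beta(t,r)=\frac{\li(e^t N)}{e^t}+\theta(t,N)$. Subtracting the two identities yields
$$\beta(t,r)-\beta_n(t,r)=\theta(t,N)-\theta_n(t,N),$$
which Theorem \ref{mainprop}(2) also records as $\theta(t,n)$. With this identification the corollary is immediate: Theorem \ref{mainprop}(2) bounds $\theta(t,N)-\theta_n(t,N)=\beta(t,r)-\beta_n(t,r)$ two-sidedly by precisely the displayed expressions, and Theorem \ref{mainprop}(3) gives $\theta(t,N)-\theta_n(t,N)=\frac{1+o(1)}{12n(\log n)^2}$ as $n\to\infty$, which is the stated asymptotic.

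I do not anticipate a genuine obstacle here. All of the analytic content --- positivity, monotonicity, boundedness, the explicit error estimates, and the $\frac{1+o(1)}{12n(\log n)^2}$ asymptotic --- has already been extracted in Theorem \ref{mainprop} via Propositions \ref{mainprop1} and \ref{mainprop2}, and what remains is purely a matter of unwinding Definition \ref{maindef}. The only two points worth a moment's attention are the change of variables in $\li$ (clean because the Ramanujan--Soldner normalization presents $\li$ as a genuine integral starting at $\mu$, and because the hypothesis $r>e^{-t}$ keeps $\mu e^{-t}$, $N=\lceil r\rceil$, and all $x\ge N$ on the same side of the singularity $v=e^{-t}$) and the trivial observation that replacing $r$ by $\lceil r\rceil$ leaves a sum over integers unchanged. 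Hence the corollary follows.
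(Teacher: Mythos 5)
Your proposal is correct and follows essentially the same path as the paper: identify $\beta(t,r)-\beta_n(t,r)$ with $\theta(t,\lceil r\rceil)-\theta_n(t,\lceil r\rceil)$ via $\beta(t,r)=\beta(t,\lceil r\rceil)=\frac{\li(e^t\lceil r\rceil)}{e^t}+\theta(t,\lceil r\rceil)$, then read off the bounds from Theorem~\ref{mainprop}(2) and the asymptotic from Theorem~\ref{mainprop}(3). The paper states these identities without showing the change of variables and integral-splitting that you spell out, but the underlying reasoning is the same.
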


Now, again, let $t,r \in \RR$ with $ r > e^{-t}$.  Then 
$$\frac{\li (e^t n)}{e^t}   =  \sum_{r\leq  k < n}   \frac{1}{H_k-\gamma+t}+ \beta_n(t,r), \quad \forall n \geq r,$$
so that, since by Corollary \ref{maincor} the quantity $\beta(t,r)-\beta_n(t,r)$ is small for $n \geq r$, one has
 $$\frac{\li (e^t n)}{e^t} \approx \sum_{r \leq k < n}   \frac{1}{H_k-\gamma+t} + \beta(t,r), \quad \forall n \geq r$$
in a sense made precise by the corollary.  It behooves us to choose $r = r_t$ in terms of $t$ so that the absolute value of the quantity $$\beta(t,r) = \beta(t,\lceil r \rceil) =  \frac{\li(e^t \lceil r \rceil)}{e^t}+ \theta(t,\lceil r \rceil)$$
is minimized.  Since $\theta(t,\lceil r \rceil)$ is always nonnegative,  by far the dominant and more unpredictable term in the expression above is $\frac{\li(e^t \lceil r \rceil)}{e^t} = \beta_{ \lceil r \rceil}(t,r)$.   At least the nonnegativity of $\beta(t,r)$ can be guaranteed as long as $ \frac{\li(e^t \lceil r \rceil)}{e^t}$ is nonnegative.  So it would be prudent to minimize that term subject to the constraint that $\frac{\li(e^t \lceil r \rceil)}{e^t}$ be nonnegative.

This can be achieved by employing the Ramanujan--Soldner constant $\mu$.  Clearly $\frac{\li(e^t \lceil r \rceil)}{e^t}$ is nonegative if and only if $e^t \lceil r \rceil \geq \mu$, and then the term is minimized for any $r$ with $\lceil r \rceil =  \lceil \mu e^{-t}\rceil $, e.g., for $r_t = \mu e^{-t},$ where $r_t > 0$ is uniquely determined by the equation $\li(r_te^t) = 0$.
Let $$R_t = \lceil r_t  \rceil  = \lceil \mu e^{-t}  \rceil.$$
Since $\li(x)$ is increasing for $x > 1$, one has
$$ 0  = \frac{\li(e^t r_t)}{e^t} \leq \frac{\li(e^t R_t)}{e^t}<  \frac{\li(e^t (r_t+1))}{e^t}  = \frac{\li(\mu+e^t)}{e^t}.$$
Moreover, the function $\frac{\li(\mu+e^t)}{e^t} $ is strictly decreasing with
$ \lim_{t \to \infty}  \frac{\li(\mu+e^t)}{e^t} =  0$
and
$$ \lim_{t \to -\infty}  \frac{\li(\mu+e^t)}{e^t} = \lim_{s \to 0}  \frac{\li(\mu+s)-\li(\mu)}{s} = \li'(\mu) = \frac{1}{\log \mu} = 2.684510350820\ldots.$$
Consequently, one has
$$0 \leq \frac{\li(e^t R_t)}{e^t} < \frac{\li(\mu+e^t)}{e^t} <\frac{1}{\log \mu}$$
for all $t \in \RR$.  Thus, the function $\beta(t,r_t)$ is positive and bounded.
More precise bounds on the main term $\frac{\li(e^t R_t)}{e^t} $ of $\beta(t,r_t)$ can be obtained from the following lemma, which follows readily from the fact that the integrand $\frac{1}{\log u}$ of $\li(x) = \int_0^x \frac{du}{\log u}$ is positive, decreasing, and concave up on $(1,\infty)$. 

\begin{lemma}\label{lem0a}
One has the following.
\begin{enumerate}
\item For all $y > x > 1$, one has
$$0< \frac{y-x}{\log \left(\frac{x+y}{2} \right)} < \li(y)-\li(x) < \frac{y-x}{\log x}.$$
\item For all $y > x > 0$ and all $t > -\log x$, one has
$$0 < \frac{y-x}{t + \log \left(\frac{x+y}{2} \right)}< \frac{\li(e^t y)}{e^t}-\frac{\li(e^t x)}{e^t} < \frac{y-x}{t + \log x}.$$
\end{enumerate}
\end{lemma}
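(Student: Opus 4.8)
The plan is to prove Lemma \ref{lem0a} directly from elementary convexity properties of the integrand $g(u) = \frac{1}{\log u}$ on $(1,\infty)$, which is positive, strictly decreasing, and strictly convex there (as noted in the text). First I would establish part (1), since part (2) is essentially a change-of-variables restatement. For $y > x > 1$, write $\li(y) - \li(x) = \int_x^y g(u)\, du$. The upper bound $\int_x^y g(u)\, du < (y-x) g(x)$ is immediate from strict monotonicity: $g(u) < g(x)$ for all $u \in (x,y]$. For the lower bound, I would invoke the midpoint inequality for a strictly convex function: since $g$ is strictly convex on $[x,y]$, the graph of $g$ lies strictly above its tangent line at the midpoint $m = \frac{x+y}{2}$ on $[x,y]\setminus\{m\}$, so $\int_x^y g(u)\, du > (y-x)\, g(m) = \frac{y-x}{\log((x+y)/2)}$; positivity of this middle term is clear since $(x+y)/2 > 1$.

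Next I would deduce part (2) from part (1) by the substitution $u = e^t v$. For $y > x > 0$ and $t > -\log x$ (so that $e^t x > 1$), the defining identity $\li(z) = \int_\mu^z \frac{du}{\log u}$ for $z > 1$ gives
\[
\frac{\li(e^t y)}{e^t} - \frac{\li(e^t x)}{e^t} = \frac{1}{e^t}\int_{e^t x}^{e^t y} \frac{du}{\log u} = \int_x^y \frac{dv}{\log(e^t v)} = \int_x^y \frac{dv}{t + \log v}.
\]
The integrand $v \mapsto \frac{1}{t+\log v}$ is again positive, strictly decreasing, and strictly convex on $(e^{-t}, \infty) \supseteq [x,y]$, so the same two arguments — strict monotonicity for the upper bound, the midpoint/tangent-line inequality for strict convexity for the lower bound — yield
\[
0 < \frac{y-x}{t+\log\!\left(\frac{x+y}{2}\right)} < \int_x^y \frac{dv}{t+\log v} < \frac{y-x}{t+\log x},
\]
which is exactly the claimed chain of inequalities. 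Alternatively, part (2) follows from part (1) applied to $x' = e^t x$, $y' = e^t y$ after dividing by $e^t$ and simplifying $\log\frac{e^t x + e^t y}{2} = t + \log\frac{x+y}{2}$; I would present whichever is cleaner.

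There is no real obstacle here — the lemma is a packaging of two standard facts (a decreasing integrand is bounded above by its left endpoint value times the interval length; a convex integrand is bounded below by its midpoint value times the interval length). The only point requiring a sentence of care is confirming strict convexity of $\frac{1}{t+\log v}$: one computes $\frac{d^2}{dv^2}\frac{1}{t+\log v} = \frac{1}{v^2(t+\log v)^2} + \frac{2}{v^2(t+\log v)^3} > 0$ on $(e^{-t},\infty)$ (this is exactly the function $f''$ computed in the proof of Proposition \ref{mainprop2}), and similarly $g''(u) = \frac{1}{u^2(\log u)^2} + \frac{2}{u^2(\log u)^3} > 0$ for $u > 1$, so both strict midpoint inequalities are legitimate. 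I would also remark that strictness in all inequalities follows because $g$ (resp. the shifted integrand) is not constant on any subinterval, so the midpoint tangent-line bound and the monotonicity bound are both strict.
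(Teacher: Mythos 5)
Your proof is correct and fills in precisely the details the paper itself only gestures at — the paper justifies Lemma \ref{lem0a} with the one-line remark that it ``follows readily from the fact that the integrand $\frac{1}{\log u}$ is positive, decreasing, and concave up on $(1,\infty)$,'' and your argument (monotonicity for the upper bound, the strict midpoint/Hermite--Hadamard inequality for the lower bound, and the change of variables $u = e^t v$ for part (2)) is exactly how one unpacks that remark. No gaps; this matches the paper's intended route.
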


\begin{corollary}\label{Rcor}
Let $t \in \RR$, let $r_t = \mu e^{-t}$,  and let $R_t   = \lceil r_t \rceil$.
One has 
$$0\leq  \beta_{R_t}(t, R_t) = \frac{\li(e^t R_t)}{e^t}    < \frac{\li(\mu +e^t)}{e^t} < \frac{1}{\log \mu}$$
and
$$0 \leq \frac{R_t-r_t}{t + \log R_t }  \leq \frac{R_t-r_t}{t + \log \left(\frac{R_t+r_t}{2} \right)} \leq \frac{\li(e^t R_t)}{e^t}  \leq  \frac{R_t-r_t}{\log \mu} < \frac{1}{\log \mu}.$$
\end{corollary}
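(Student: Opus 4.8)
The plan is to derive both displayed chains from Definition~\ref{maindef}, the discussion immediately preceding Lemma~\ref{lem0a}, and a single application of Lemma~\ref{lem0a}(2); no new estimate is needed. First I would observe that the defining sum $\sum_{R_t \le k < R_t}\frac{1}{H_k-\gamma+t}$ in Definition~\ref{maindef} is empty, so $\beta_{R_t}(t,R_t) = \frac{\li(e^t R_t)}{e^t}$ at once; the hypothesis $r > e^{-t}$ needed to invoke the definition holds because $R_t = \lceil \mu e^{-t}\rceil \ge \mu e^{-t} > e^{-t}$, using $\mu > 1$. The chain $0 \le \frac{\li(e^t R_t)}{e^t} < \frac{\li(\mu+e^t)}{e^t} < \frac{1}{\log\mu}$ is exactly the inequality displayed in the paragraph preceding Lemma~\ref{lem0a}, so I would simply cite it, recalling the reasons: nonnegativity because $e^t R_t \ge \mu$ and $\li(\mu)=0$ with $\li$ increasing on $(1,\infty)$; the middle inequality because $\lceil \mu e^{-t}\rceil < \mu e^{-t}+1$, hence $e^t R_t < \mu + e^t$; and the last inequality because $t \mapsto \frac{\li(\mu+e^t)}{e^t}$ is strictly decreasing with $\lim_{t\to-\infty}\frac{\li(\mu+e^t)}{e^t} = \li'(\mu) = \frac{1}{\log\mu}$. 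This settles the first chain.

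For the second chain, the key input is Lemma~\ref{lem0a}(2) applied with $x = r_t = \mu e^{-t}$ and $y = R_t$, which is legitimate when $r_t \notin \ZZ$ since then $R_t > r_t > 0$ and $t > -\log r_t$ (again from $\mu > 1$). The lemma yields
$$\frac{R_t - r_t}{t+\log\left(\frac{r_t+R_t}{2}\right)} < \frac{\li(e^t R_t)}{e^t} - \frac{\li(e^t r_t)}{e^t} < \frac{R_t - r_t}{t + \log r_t},$$
and since $\li(e^t r_t) = \li(\mu) = 0$ and $t + \log r_t = t + \log(\mu e^{-t}) = \log\mu$, the middle expression is $\frac{\li(e^t R_t)}{e^t}$, which gives precisely the two middle inequalities of the asserted chain. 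It then remains to attach the two outer inequalities by elementary bookkeeping: on the left, $R_t - r_t \ge 0$ and $R_t \ge \frac{r_t+R_t}{2} \ge r_t > e^{-t}$, so both $t+\log R_t$ and $t+\log\left(\frac{r_t+R_t}{2}\right)$ are positive and the former is the larger, whence $0 \le \frac{R_t-r_t}{t+\log R_t} \le \frac{R_t-r_t}{t+\log\left(\frac{r_t+R_t}{2}\right)}$; on the right, $R_t - r_t = \lceil r_t\rceil - r_t < 1$ and $\log\mu > 0$ give $\frac{R_t-r_t}{\log\mu} < \frac{1}{\log\mu}$.

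The only point requiring care — and the closest thing here to an obstacle — is the degenerate case in which $\mu e^{-t}$ is a positive integer (equivalently $t = \log(\mu/m)$ for some positive integer $m$), so that $R_t = r_t$ and Lemma~\ref{lem0a}(2) does not literally apply. In that case $R_t - r_t = 0$, $\frac{\li(e^t R_t)}{e^t} = \frac{\li(\mu)}{e^t} = 0$, and $\frac{R_t-r_t}{\log\mu} = 0$, so every non-strict inequality in the second chain holds with common value $0$ and one only needs $0 < \frac{1}{\log\mu}$; likewise $\frac{\li(e^t R_t)}{e^t} = 0 < \frac{\li(\mu+e^t)}{e^t}$ still holds. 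I would also double-check once and for all that every denominator appearing ($t + \log R_t$, $t + \log\left(\frac{r_t+R_t}{2}\right)$, $t + \log r_t = \log\mu$, and $e^t$) is strictly positive, so that no division is illegitimate; all four positivity facts follow from $\mu > 1$ and $r_t = \mu e^{-t}$.
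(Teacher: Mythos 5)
Your proposal is correct and reconstructs exactly the argument the paper intends: the first chain is the display in the paragraph preceding Lemma~\ref{lem0a}, and the second follows from Lemma~\ref{lem0a}(2) applied with $x = r_t$, $y = R_t$, using $\li(\mu) = 0$ and $t + \log r_t = \log\mu$. Your handling of the degenerate case $R_t = r_t$ (where the lemma's strict hypotheses fail) is a useful bit of care that the paper leaves implicit, and it correctly explains why the corollary's second chain is stated with $\leq$ rather than $<$.
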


The discussion above motivates the following definition.

\begin{definition}
Let $t \in \RR$.  
\begin{enumerate}
\item Let 
$$R_t   = \lceil \mu e^{-t} \rceil.$$  Equivalently, $R_t$ is the unique positive integer $N$ such that $t \in [\log  \mu - \log N, \log \mu - \log (N-1))$ (where we set $\log 0 = -\infty$, so that $R_t = 1$ if and only if $t \in [\log \mu, \infty)$).
\item Let $$\beta_x(t) = \beta_x(t, \mu e^{-t}) =  \frac{\li (e^t x)}{e^t}  -\sum_{R_t \leq k < x}   \frac{1}{H_k-\gamma+t}$$
for all $x \geq \mu e^{-t}$,
and let
$$\beta(t) = \beta(t,\mu e^{-t}) =  \lim_{n \to \infty} \beta_n(t) = \lim_{x \to \infty} \beta_x(t).$$
\end{enumerate}
\end{definition}

 A graph of the function $R_t$ is provided in Figure \ref{har3}, and a graph of the function $\beta_{R_t}(t) = \frac{\li(e^t R_t)}{e^t} \approx \beta(t)$ alongside a graph of its upper bound $ \frac{\li(\mu +e^t)}{e^t}$ as in Corollary \ref{Rcor} is provided in Figure \ref{har2}.

\begin{figure}[ht!]
\centering
\includegraphics[width=70mm]{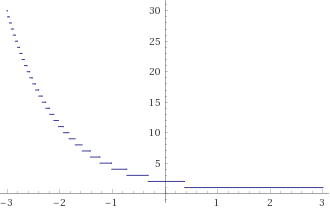}
\caption{Graph of $R_t = \lceil \mu e^{-t} \rceil$ on $[-3,3]$  \label{har3}}
\end{figure}

\begin{figure}[ht!]
\centering
\includegraphics[width=130mm]{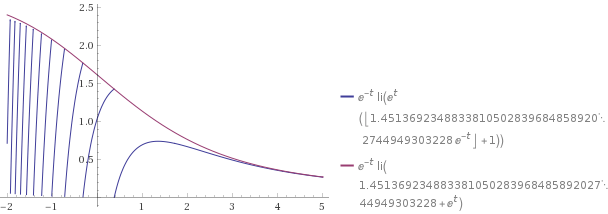}
\caption{Approximate graph of $\frac{\li(e^t R_t)}{e^t}$ and its upper bound  $ \frac{\li(\mu +e^t)}{e^t} < \frac{1}{\log \mu} $ on $[-2,5]$  \label{har2}}
\end{figure}

Applying Corollary \ref{maincor} to $r =  \mu e^{-t}$, we obtain the following.

\begin{corollary}\label{maincor2}
For all $t\in \RR$ and all $n  \geq  R_t = \lceil \mu e^{-t} \rceil$, one has
\begin{align*}
& \int_{n+1}^{\infty} \frac{dx}{24x^2 (t+\log x)^2} +
\frac{1}{24(n+1)(t+\log (n+1))^2} \\  & \qquad \leq \beta(t) - \beta_n(t)   \\ & \qquad \leq \int_n^\infty \frac{dx}{24x^2 (t+\log x)^2}  +\frac{1}{24(n+\sfrac{1}{2})(t+\log (n+\sfrac{1}{2}))^2} \\ & \qquad \qquad  + \frac{1}{24n^2(t+\log n)^2}  +  \frac{1}{12n^2(t+\log n)^3}.
\end{align*}
Moreover, one has
$$\beta(t) = \beta_{n}(t) + \frac{1+o(1)}{12n(\log n)^2} \ (n \to \infty).$$
\end{corollary}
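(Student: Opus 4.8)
The plan is to obtain Corollary \ref{maincor2} directly as the special case $r = \mu e^{-t}$ of Corollary \ref{maincor}; all the analytic content has already been established, so the only thing to do is to check that the hypothesis of that corollary is met and then transcribe its conclusion into the present notation.

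First I would verify the hypothesis. Corollary \ref{maincor} requires $t, r \in \RR$ with $r > e^{-t}$. Taking $r = \mu e^{-t}$, this becomes $\mu e^{-t} > e^{-t}$, i.e.\ $\mu > 1$, which holds since $\mu = 1.451369234883\ldots$. Hence Corollary \ref{maincor} is applicable with $r = \mu e^{-t}$ for every $t \in \RR$. Next I would match notation: by definition $R_t = \lceil \mu e^{-t} \rceil = \lceil r \rceil$, and the definitions of $\beta_x(t)$ and $\beta(t)$ give $\beta_x(t) = \beta_x(t, \mu e^{-t})$ and $\beta(t) = \beta(t, \mu e^{-t})$. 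Substituting $r = \mu e^{-t}$ into the two-sided estimate of Corollary \ref{maincor}, which is valid there for all $n \geq \lceil r \rceil$, then produces precisely the asserted bounds on $\beta(t) - \beta_n(t)$ for all $n \geq R_t$; and substituting into the asymptotic formula $\beta(t,r) = \beta_n(t,r) + \frac{1+o(1)}{12n(\log n)^2}$ yields $\beta(t) = \beta_n(t) + \frac{1+o(1)}{12n(\log n)^2}$ as $n \to \infty$.

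There is no genuine obstacle here: the only point requiring any attention, and it is a trivial one, is the inequality $\mu > 1$ used to verify the applicability of Corollary \ref{maincor}. The passage from $r = \mu e^{-t}$ to $\lceil r \rceil = R_t$ is harmless because, as recorded just before Corollary \ref{maincor}, the quantities $\beta(t,r)$ and $\beta_n(t,r)$ depend on $r$ only through $\lceil r \rceil$ — a fact already incorporated into the definitions of $\beta(t)$ and $\beta_x(t)$.
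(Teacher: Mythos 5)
Your proposal is correct and is exactly the paper's own argument: the paper introduces Corollary \ref{maincor2} with the single sentence ``Applying Corollary \ref{maincor} to $r = \mu e^{-t}$, we obtain the following.'' The only small thing you add is the explicit check that $\mu > 1$ guarantees $r = \mu e^{-t} > e^{-t}$, which the paper leaves implicit.
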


It is clear that Corollary \ref{maincor2} implies Theorem \ref{maintheorem}.   Letting $n = R_t$,  we also conclude the following.

\begin{corollary}\label{maincor2b}
For all $t \in \RR$, one has
\begin{align*}
& \int_{R_t+1}^{\infty} \frac{dx}{24x^2 (t+\log x)^2} +
\frac{1}{24(R_t+1)(t+\log (R_t+1))^2} \\  & \qquad \leq  \beta(t) - \frac{\li(e^t R_t)}{e^t}   \\ &  \qquad \leq \int_{R_t}^\infty \frac{dx}{24x^2 (t+\log x)^2}  +\frac{1}{24(R_t+\sfrac{1}{2})(t+\log (R_t+\sfrac{1}{2}))^2} \\ & \qquad \qquad  + \frac{1}{24R_t^2(t+\log R_t)^2}  +  \frac{1}{12R_t^2(t+\log R_t)^3}.
\end{align*}
\end{corollary}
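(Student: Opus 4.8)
The plan is to obtain the statement as the special case $n = R_t$ of Corollary \ref{maincor2}. Since $R_t \geq R_t$, the hypothesis $n \geq R_t$ of that corollary is met by the choice $n = R_t$, so it yields the three-line chain of inequalities with middle term $\beta(t) - \beta_{R_t}(t)$ and with every occurrence of $n$ in the bounding expressions replaced by $R_t$. Matching this against the displayed inequalities to be proved, the only point left to verify is the identity $\beta_{R_t}(t) = \frac{\li(e^t R_t)}{e^t}$.

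That identity is immediate from the definition of $\beta_x(t)$. By definition $\beta_x(t) = \frac{\li(e^t x)}{e^t} - \sum_{R_t \leq k < x} \frac{1}{H_k-\gamma+t}$, and for $x = R_t$ the index set $\{\,k \in \ZZ : R_t \leq k < R_t\,\}$ is empty, so the sum is zero and $\beta_{R_t}(t) = \frac{\li(e^t R_t)}{e^t}$. (The same fact, in the form $\beta_{R_t}(t, R_t) = \frac{\li(e^t R_t)}{e^t}$, is already recorded in Corollary \ref{Rcor}.) Substituting it into the middle term of the inequalities supplied by Corollary \ref{maincor2} completes the argument.

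There is essentially no obstacle here: all of the genuine work --- the positivity, monotonicity, and boundedness of the sequences $\eta_n(t,N)$, $\delta_n(t,N)$, and $\theta_n(t,N)$, together with the Euler--Maclaurin-type comparison (\ref{harmony}) between $H_x - \gamma$ and $\log(x+\sfrac{1}{2})$ --- has already been packaged into Theorem \ref{mainprop} and Corollary \ref{maincor2}. Equivalently, one can run the argument through the identity $\beta(t) - \beta_{R_t}(t) = \theta(t, R_t)$, a special case of the relation $\beta(t,r) - \beta_n(t,r) = \theta(t,n)$ recorded just before Corollary \ref{maincor}, and then quote the bounds on $\theta(t,n)$ from Theorem \ref{mainprop}(2), evaluated at $n = R_t$ (with the lower index also taken to be $R_t$, so that $\theta_{R_t}(t,R_t) = 0$; note $t > -\log R_t$ holds automatically since $R_t \geq \mu e^{-t} > e^{-t}$). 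The only minor points requiring care are confirming the hypothesis $n \geq R_t$ for $n = R_t$ and applying the empty-sum convention so that $\beta_{R_t}(t)$ collapses to the single logarithmic-integral term; the payoff is that $\beta(t)$ --- and in particular $\kappa = \beta(\gamma)$ --- is then trapped between explicit, computable quantities.
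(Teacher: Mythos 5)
Your proposal is correct and matches the paper's own approach exactly: the paper states Corollary \ref{maincor2b} immediately after Corollary \ref{maincor2} with the single remark ``Letting $n = R_t$, we also conclude the following.'' You also correctly identify the only point needing a word of justification, namely that $\beta_{R_t}(t) = \frac{\li(e^t R_t)}{e^t}$ because the defining sum is empty.
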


Table 1 shows the upper and lower bounds for $\beta(t)$ (rounded up and down, respectively) for all integers $t \in [-15,15]$ provided by the inequalities in Corollary \ref{maincor2b} above and compares them with approximate values of $\beta(t)$ that we computed using WolframAlpha by taking $n$ in the limit expression $\lim_{n \to \infty}\left( \frac{\li (e^t n)}{e^t}  - \sum_{k = \lceil \mu e^{-t} \rceil}^{n-1}  \frac{1}{H_k-\gamma+t} \right)$ for $\beta(t)$ as large as the online tool would allow.   Notice that the bounds on $\beta(t)$ thus computed for integers $t \in [-15, -7]$ are better than such direct estimates of $\beta(t)$.  All of these bounds on $\beta(t)$ can of course can be improved by increasing $n$ as in Corollary \ref{maincor2}.   For instance, in Table 2, we computed the bounds on $\beta(t)$ from Corollary \ref{maincor2} with $n = 50$ for nine special values of $t$ that of particular interest in the next three sections.

\begin{table}\label{table1}
\caption{Upper and lower bounds of $\beta(t)$ in Corollary \ref{maincor2b} (with $n = R_t$)} \bigskip
\footnotesize
\centering 
\begin{tabular}{r|l|l|l|l|l} 
$t$  & $\beta(t) <$   & $\beta(t) \approx$  &   $\beta(t) >$ & $\frac{\li(e^t R_t)}{e^t} \approx$ & $R_t$ \\ \hline 
$15$   & 0.07236490 & $0.07220$ & 0.07203360 & 0.07187354 &  $ 1$  \\
$14$ & 0.07805640  & $0.07786$  & 0.07767424 & 0.07749225 & $ 1$  \\ 
$13$  & 0.08473348 & $0.08450 $  & 0.08428780  & 0.08407903 & $ 1$ \\ 
$12$  & 0.09268293 & $ 0.09241 $  &  0.09215649 & 0.09191455 & $ 1$ \\ 
$11$ & 0.1023178 & $ 0.10199 $ &  0.1016865 & 0.1014028 & $ 1$ \\ 
$10$ & 0.1142551  & $ 0.11385$ &  0.1134844 & 0.1131470 & $ 1$ \\ 
$9$ &  0.1294542 & $ 0.12895$ &  0.1284922 & 0.1280844  & $ 1$ \\ 
$8$ & 0.1494680   & $0.14880 $ &   0.1482342 & 0.1477310 & $ 1$ \\ 
$7$ & 0.1769053  & $ 0.17601 $ & 0.1752663  & 0.1746297  & $ 1$ \\ 
$6$ & 0.2162592 & $ 0.21500$ &  0.2139786 & 0.2131473 & $ 1$ \\ 
$5$ & 0.2752827 & $ 0.27337$ &  0.2718986 &  0.2707663 & $ 1$ \\ 
$4$ & 0.3667085  & $ 0.36349$ &  0.3611866 & 0.3595520 & $ 1$ \\ 
$3$ & 0.5076564 & $ 0.50123$ &  0.4971471 & 0.4945764 & $ 1$ \\ 
$2$ & 0.7018947 & $0.68422$  &  0.6751323 & 0.6704827 & $ 1$ \\ 
$1$ & 0.8530561 & $0.74190$ &  0.7082072 &  0.6971749 & $ 1$  \\ 
$0$  &   1.1635319  & $1.09564$  &  1.0615462 & 1.0451638 & $ 2$ \\ 
$-1$  & 0.3044511  & $0.22299$  & 0.1512070  & 0.1443674 & $ 4$ \\ 
$-2$  & 0.7531859 & $0.74512$ & 0.7346875 & 0.7160222 & $ 11$  \\ 
$-3$  & 2.2054409  & $2.20450 $ & 2.2027692  & 2.1938815 &  $ 30$ \\ 
$-4$   & 2.0135694 & $2.01342$  & 2.0131210  & 2.0090540 &  $80$ \\ 
$-5$   & 1.6003036 & $ 1.60028$ & 1.6002378  & 1.5986089 &  $ 216$  \\ 
$-6$ & 1.2766878 & $ 1.27667$   & 1.2766786   & 1.2760617 &  $586$ \\ 
$-7$  & 1.0210154 & $1.02099$  &  1.0210141  & 1.0207849 &  $1592$  \\ 
$-8$   & 1.4207283 & $1.42068$  & 1.4207280  & 1.4206435 &  $4327$   \\ 
$-9$  & 1.1631179 & $1.16309$ & 1.1631178  &  1.16308670 & $11761$  \\ 
$-10$  & 1.2488004  & $1.24879$ & 1.2488003  & 1.24878890 &  $31969$  \\ 
$-11$  & 1.3764747 & $1.37647$ &  1.3764746 &  1.37647048 &  $86900$  \\
$-12$  & 1.8869487 & $1.88694$  & 1.8869486  & 1.88694713 & $236218$ \\
$-13$   & 2.1844846 & $2.18448$ & 2.1844845  & 2.18448397 &  $642106$ \\
$-14$  & 0.37643374 & $0.37643$  &  0.37643373 & 0.37643352 & $1745423$  \\
$-15$   &  2.032965028  & $2.032965$ &   2.032965027 & 2.032964950 & $4744552$  
\end{tabular}
\end{table}

\begin{table}\label{table2}
\caption{Upper and lower bounds of $\beta(t)$ in Corollary \ref{maincor2} with $n = 50$} \bigskip
\footnotesize
\centering 
\begin{tabular}{l|l|l|l|l} 
$t$  & $\beta(t) <$    &   $\beta(t) >$ & $\frac{\li(e^t R_t)}{e^t} $ & $R_t $ \\ \hline 
$\gamma+1\approx 1.577216$ & 0.7509547014   & 0.7509261228  & $\li(e^{\gamma+1})/e^{\gamma+1}\approx 0.730170$  & 1 \\
$\log \alpha \approx 1.347155$  & 0.7695247294 &  0.7695229079 & $\li(\alpha)/\alpha \approx 0.742305$ &  1 \\
1 & 0.7418976158 & 0.7418955006 & $\li(e)/e \approx 0.697175$ & 1 \\
$\log 2 \approx 0.693147$ &  0.6026096358  & 0.6026071971  &  $\li(2)/2 \approx 0.522582$  &  1 \\
$\gamma \approx 0.577216$ &   0.4986013304   & 0.4985987518  & $\li(e^\gamma)/e^\gamma \approx 0.393102$  & 1 \\
$\log \mu \approx 0.372507$  & 0.1952555336 & 0.1952526746 & 0 &  1 \\
 0 & 1.0956456993  &  1.0956421994    &  $\li(2) \approx 1.045164$ &  2 \\
 $-\log 2\approx -0.693147$ & 0.3417372460  &  0.3417318184 &   $2 \li(3/2) \approx 0.250130$ &  3 \\
$-1$  & 0.2229882714 & 0.2229814526 & $e \li(4/e) \approx 0.144367$ & 4 \\
\end{tabular}
\end{table}

Tables 1 and 2 also provide approximate values for the coarsest of all of our lower bounds of $\beta(t)$, namely, $\beta_{R_t}(t) = \frac{\li(e^t R_t)}{e^t}$.  In particular, one can see that
$\beta(t) \approx \frac{\li(e^t R_t)}{e^t}$ for $|t|$ sufficiently large.  This is made precise by the following proposition.

\begin{proposition}
Let $t \in \RR$. 
\begin{enumerate}
\item One has
$${R_t} \sim \mu e^{-t} \ (t \to -\infty),$$
$$t+\log R_t \geq \log \mu,$$
and $$\limsup_{t \to -\infty} \frac{\li(e^t R_t)}{e^t} = \lim_{t \to -\infty} \frac{1}{t + \log R_t} = \li'(\mu) =  \frac{1}{\log \mu} = 2.684510350820\ldots.$$
\item One has
$$\beta(t) = \frac{\li(e^t R_t)}{e^t} + \frac{(1+o(1)) e^t}{12\mu (\log \mu)^2} \ (t \to -\infty)$$
and
$$\limsup_{t \to -\infty} \beta(t) =  \frac{1}{\log \mu},$$
where also $\frac{1}{12\mu (\log \mu)^2} =-\frac{\li''(\mu) }{12} = \frac{1}{2.416734786403\ldots}$.
\item One has
$$\beta(t) < \frac{1}{24} \mu \li\left(\frac{1}{\mu}\right) + \frac{1}{24 \log \mu} + \frac{1}{ 24 (\log \mu)^2} + \frac{1}{36 (\log(\sfrac{3\mu}{2}))^2}+\frac{1}{12(\log \mu)^3} = 2.0248039 \ldots$$
on $[\log \mu, \infty)$.
\item  One has
$$\beta(t)  =  \frac{\li(e^t)}{e^t} + O\left(\frac{1}{t^2}\right) =  \frac{1}{t} + O\left(\frac{1}{t^2}\right)  \ (t \to \infty)$$
and 
 $$\beta(t)  \sim  \frac{\li(e^t)}{e^t} \sim  \frac{1}{t}  \ (t \to \infty).$$
\item One has
$$\beta(t) <  \limsup_{u \to -\infty} \beta(u) =\frac{1}{\log \mu},$$
and therefore the least upper bound of the range of $\beta$ is  $\frac{1}{\log \mu}$.
\end{enumerate}
\end{proposition}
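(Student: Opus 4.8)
The plan is to establish the five parts in order, since each builds on the earlier ones together with Theorem~\ref{mainprop} and Corollaries~\ref{Rcor}, \ref{maincor2}, and \ref{maincor2b}. For part~(1) I would start from $\mu e^{-t}\le R_t<\mu e^{-t}+1$: dividing by $\mu e^{-t}$ gives $R_t\sim\mu e^{-t}$ as $t\to-\infty$, and taking logarithms gives both $t+\log R_t\ge\log\mu$ outright and $t+\log R_t=\log\mu+\log\!\big(R_t/(\mu e^{-t})\big)\to\log\mu$; the identity $\li'(\mu)=1/\log\mu$ is immediate from $\li(x)=\int_\mu^x du/\log u$. For the limsup, Corollary~\ref{Rcor} gives $\li(e^tR_t)/e^t<\li(\mu+e^t)/e^t$, whose right side already tends to $\li'(\mu)=1/\log\mu$, so $\limsup\le 1/\log\mu$; for the reverse I would use the lower bound $\li(e^tR_t)/e^t\ge (R_t-r_t)/(t+\log R_t)$ from Corollary~\ref{Rcor} along a sequence $t_n\to-\infty$ chosen just below the points $\log\mu-\log N$, so that $r_{t_n}=\mu e^{-t_n}$ has just passed an integer, forcing $R_{t_n}-r_{t_n}\to 1$ while $t_n+\log R_{t_n}\to\log\mu$.

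For part~(2) I would write $\beta(t)=\li(e^tR_t)/e^t+\theta(t,R_t)$ and feed $n=R_t$ into the two-sided estimate of Corollary~\ref{maincor2b}. As $t\to-\infty$ each of $R_t,R_t+\tfrac12,R_t+1$ is asymptotic to $\mu e^{-t}$ and the logarithmic factors tend to $\log\mu$, so by Lemma~\ref{lambda} the tail integrals equal $(1+o(1))\,e^t/(24\mu(\log\mu)^2)$ and both bounds for $\theta(t,R_t)$ collapse to $(1+o(1))\,e^t/(12\mu(\log\mu)^2)$, the $O(e^{2t})$ remainders being negligible; since this correction vanishes, $\limsup_{t\to-\infty}\beta(t)=\limsup_{t\to-\infty}\li(e^tR_t)/e^t=1/\log\mu$ by part~(1), and $1/(12\mu(\log\mu)^2)=-\li''(\mu)/12$ follows from $\li''(x)=-1/(x(\log x)^2)$. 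Part~(4) is the quickest: on $[\log\mu,\infty)$ one has $R_t=1$, so $\beta(t)=\li(e^t)/e^t+\theta(t,1)$ with $\theta(t,1)=O(1/t^2)$ as $t\to\infty$ by Theorem~\ref{mainprop}(3), and the claims follow from the elementary expansion $\li(e^t)/e^t=1/t+O(1/t^2)$.

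For part~(3), again $R_t=1$ on $[\log\mu,\infty)$, so Corollary~\ref{maincor2b} with $n=R_t=1$ gives $\beta(t)\le U(t):=A(t)+B(t)$, where $A(t)=\li(e^t)/e^t$ and $B(t)=\tfrac1{24}\big(e^t\li(e^{-t})+\tfrac1t\big)+\tfrac1{36(t+\log\frac32)^2}+\tfrac1{24t^2}+\tfrac1{12t^3}$, after using Lemma~\ref{lambda} to rewrite $\int_1^\infty\frac{dx}{24x^2(t+\log x)^2}$. The claimed constant is precisely $U(\log\mu)=B(\log\mu)$ (since $A(\log\mu)=\li(\mu)/\mu=0$), so it suffices to show $U$ is maximized on $[\log\mu,\infty)$ at the left endpoint. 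Since $B$ is strictly decreasing (which follows from Lemma~\ref{lambda}) and $A'(t)=\tfrac1t-A(t)$ is $\ge0$ on $[\log\mu,t_0]$ and $<0$ afterwards — where $t_0$ is the unique point with $A(t_0)=1/t_0$, uniqueness holding because $A(s)-1/s$ has positive derivative at each of its zeros — integrating gives $U(t)-U(\log\mu)=A(t)-\int_{\log\mu}^t|B'(s)|\,ds$, and because $A(t)\le A(\min(t,t_0))=\int_{\log\mu}^{\min(t,t_0)}A'(s)\,ds$, the whole claim reduces to the pointwise inequality $A'(s)\le|B'(s)|$ on $(\log\mu,t_0)$. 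After discarding the positive integral contribution to $|B'(s)|$ and lower-bounding $A(s)$ by a (subdivided) application of Lemma~\ref{lem0a}(1), this becomes an explicit inequality among $e^s$, $\log$, and rational functions on the bounded interval $(\log\mu,t_0)\subset(0.37,1.4)$, which I would verify by a careful monotonicity/subdivision argument (or, in the spirit of the paper, a finite computer-assisted check).

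For part~(5), the equality $\limsup_{u\to-\infty}\beta(u)=1/\log\mu$ is exactly part~(2), so it remains to prove $\beta(t)<1/\log\mu$ for every $t\in\RR$. On $[\log\mu,\infty)$ this is immediate from part~(3), since $2.0248\ldots<1/\log\mu=2.6845\ldots$. For $t<\log\mu$ we have $R_t\ge2$, and on each interval $\{t:R_t=N\}=[\log\mu-\log N,\log\mu-\log(N-1))$ I would bound $\beta(t)=\li(e^tN)/e^t+\theta(t,N)$ by the sum of the suprema of its two summands: $\li(e^tN)/e^t$ is increasing in $t$ on this interval (because $\li(x)<x/\log x$ for $x\in[\mu,3)$), hence at most its right-endpoint value $\tfrac{N-1}{\mu}\li\!\big(\tfrac{\mu N}{N-1}\big)<1/\log\mu$, while $\theta(t,N)$ is decreasing in $t$ by Proposition~\ref{monotone}, hence at most its value at $t=\log\mu-\log N$, where $t+\log N=\log\mu$ and Theorem~\ref{mainprop}(2) controls it explicitly. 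For all sufficiently large $N$ the Taylor expansion $\tfrac{N-1}{\mu}\li\!\big(\tfrac{\mu N}{N-1}\big)=\tfrac1{\log\mu}-\tfrac1{2(N-1)(\log\mu)^2}+O(1/N^2)$ together with the $O(1/N)$ decay of the $\theta$-bound keeps the sum below $1/\log\mu$; the finitely many remaining small $N$ are checked by hand. Combining $\beta(t)<1/\log\mu$ for all $t$ with $\limsup_{u\to-\infty}\beta(u)=1/\log\mu$ then shows $1/\log\mu$ is the least upper bound of the range of $\beta$. The step I expect to be the main obstacle is the quantitative inequality $A'(s)\le|B'(s)|$ on $(\log\mu,t_0)$: there the two sides are comparable only up to a small bounded factor, so a genuinely numerical (though elementary) estimate is needed rather than a crude bound; the joint control of $\li(e^tN)/e^t$ and $\theta(t,N)$ for small $N$ in part~(5) is a lesser nuisance of the same flavor.
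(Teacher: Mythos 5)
Your treatment of parts (1), (2), and (4) follows the paper's route essentially verbatim: you use the same sandwich $\mu e^{-t}\le R_t<\mu e^{-t}+1$ and Corollary~\ref{Rcor} for (1), the same identity $\beta(t)=\li(e^tR_t)/e^t+\theta(t,R_t)$ with the two-sided bound \eqref{hh} (fed through Lemma~\ref{lambda}) for (2), and the fact that $R_t=1$ on $[\log\mu,\infty)$ together with Theorem~\ref{mainprop}(3) for (4). For part (3) you and the paper both arrive at the same reduction: the upper bound $U(t)=A(t)+B(t)$ from Corollary~\ref{maincor2} at $n=R_t=1$ should attain its maximum on $[\log\mu,\infty)$ at the left endpoint, which the paper asserts by claiming $U'<0$ on the whole ray. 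Your observation that this is \emph{not} obvious — $A(t)=\li(e^t)/e^t$ is increasing on $[\log\mu,\log\alpha]$, so one genuinely must check $A'(s)\le|B'(s)|$ there — is correct; the paper simply states the derivative is negative without exhibiting this cancellation, so the quantitative verification you flag as "the main obstacle" is something the paper itself leaves implicit rather than something your argument has introduced.

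For part (5) you take a genuinely different route. The paper splits the line into three pieces: $[\log\mu,\infty)$ (handled by part (3)), $(-\infty,\log(\mu/2)]$ (handled by a single uniform upper bound via Eq.~\eqref{hh} and $\li(\mu+e^t)/e^t$, shown to increase to its limit $1/\log\mu$ and to stay below $1/\log\mu$ on the finite piece $[-1,\log(\mu/2)]$), and the single residual interval $[\log(\mu/2),\log\mu)$ where $R_t=2$. You instead work interval-by-interval over every $N\ge2$, separately maximizing the two summands $\li(e^tN)/e^t$ (increasing, so right endpoint, bounded by $\tfrac{N-1}{\mu}\li(\tfrac{\mu N}{N-1})<1/\log\mu$ by concavity of $\li$) and $\theta(t,N)$ (decreasing by Proposition~\ref{monotone}, so left endpoint, $O(1/N)$ by Theorem~\ref{mainprop}(2)), then closing the gap with a Taylor expansion for large $N$ and a finite check for small $N$. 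Both approaches end in an asymptotic estimate plus a finite numerical verification; the paper's is more economical (only three explicit cases), while yours makes visible the competition between the decaying $\theta$ term and the gap $\tfrac{1}{2(N-1)(\log\mu)^2}$ left by concavity, which is arguably more transparent even if it spreads the finite check over several values of $N$. Your plan is sound.
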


\begin{proof}
Statement (1) is an easy consquence of Corollary \ref{Rcor}.  Moreover, by Corollary \ref{maincor2} and Lemma \ref{lambda}, one has
\begin{align}\label{hh}
 & \frac{1}{12(R_t+1)(t+\log (R_t+1))^2} - \frac{1}{12(R_t+1)^2(t+\log( R_t+1))^3} \nonumber  \\ &  \qquad < \beta(t) - \frac{\li(e^t R_t)}{e^t}  \nonumber  \\
 & \qquad  < \frac{1}{12R_t(t+\log R_t)^2} + \frac{1}{24R_t^2(t+\log R_t)^2} + \frac{1}{12R_t^2(t+\log R_t)^3}  \nonumber \\
  & \qquad \leq  \frac{e^t }{12 \mu (\log \mu)^2} +  \frac{e^{2t} }{24\mu^2(\log \mu)^2}\left(1+\frac{2}{\log \mu} \right),
\end{align} 
and, since $$ \frac{1}{12(R_t+1)(t+\log (R_t+1))^2}  \sim \frac{1}{12R_t(t+\log R_t)^2}  \sim  \frac{e^t }{12 \mu (\log \mu)^2} \ (t \to -\infty),$$  statement (2) follows.  By Corollary \ref{maincor2}, one has
$$\beta(t) < \frac{\li(e^t)}{e^t} + \frac{1}{24} e^t \li\left(\frac{1}{e^{t}}\right) + \frac{1}{24 t} + \frac{1}{ 24 t^2} + \frac{1}{36 (t+\log( \sfrac{3}{2}))^2}+\frac{1}{12t^3}$$
on $[\log \mu, \infty)$.  Moreover, the upper bound of $\beta(t)$ above has negative derivative, and is therefore decreasing, on $[\log \mu, \infty)$.  Statement (3) follows.   Statement (4) follows Eq.\ (\ref{hh}) and the fact that $R_t = 1$ on $[\log \mu, \infty)$.

Finally, we prove statement (5).  By  (3), we know that $\beta(t) < 2.0248039  \ldots < \frac{1}{\log \mu}$ on $[\log \mu,\infty)$.   Moreover, by Corollary \ref{Rcor}, one has $\frac{\li(e^t R_t)}{e^t} < \frac{\li(\mu+e^t)}{e^t}.$   Therefore,  by Eq.\ (\ref{hh}), one has
$$\beta(t) < \frac{\li(\mu+e^t)}{e^t} + \frac{e^t }{12 \mu (\log \mu)^2} +  \frac{e^{2t} }{24\mu^2(\log \mu)^2}\left(1+\frac{2}{\log \mu} \right).$$
Moreover, the upper bound of $\beta(t)$ above is decreasing on $(\infty,-1]$ with limit $\frac{1}{\log \mu}$ as $t \to -\infty$, and it is less than $\frac{1}{\log \mu}$ also on $[-1,\log(\mu/2)]$.  Therefore one has $\beta(t) < \frac{1}{\log \mu}$  on $(-\infty,\log(\mu/2)]$.
Moreover, on $[\log (\mu/2),\log \mu)$, one has $R_t = 2$ and therefore, by Corollary \ref{maincor2},
$$\beta(t) < \frac{ \li(2e^t)}{e^t} +\frac{1}{24(t+\log 2)^2}+\frac{1}{60(t+\log (5/2))^2}+\frac{1}{96(t+\log 2)^2}+\frac{1}{48(t+\log 2)^3}.$$
Finally, the upper bound of $\beta(t)$ above  is maximized on $[\log (\mu/2),\log \mu]$ at the endpoint $\log \mu$, and therefore
$$\beta(t) < \frac{\li(2\mu)}{\mu} +\frac{1}{24(\log (2\mu))^2}+\frac{1}{60(\log (\sfrac{5\mu}{2}))^2}+\frac{1}{96(\log (2\mu))^2}+\frac{1}{48(\log (2\mu))^3}  = 1.501\ldots < \frac{1}{\log \mu}$$
on $[\log (\mu/2),\log \mu)$.  Thus, we have shown that $\beta(t) < \frac{1}{\log \mu}$ for all $t \in \RR$.
\end{proof}


\section{Approximating $\pi(x)$ with harmonic numbers}

The following result of Montgomery and Vaughan \cite{mv} is an analogue of Lemma \ref{lem0a} for the prime counting function.

\begin{theorem}[{\cite{mv}}]\label{mv}
For all $y > x > 0$ with $y - x > 1$, one has
$$0 \leq \pi(y) -\pi(x) < \frac{2 (y-x)}{\log (y-x)}.$$
\end{theorem}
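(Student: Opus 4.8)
The displayed bound is the \emph{Brun--Titchmarsh theorem} in the sharp form due to Montgomery and Vaughan, and the plan is to derive it from the large sieve. The left inequality $0\le \pi(y)-\pi(x)$ is immediate, so set $z=y-x>1$. The quantity $\pi(y)-\pi(x)$ counts the primes among the integers lying in the half-open interval $(x,y]$; those integers are consecutive and span a length strictly less than $z$, so we are exactly in the classical large-sieve setting with ``interval length'' at most $z$.

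First I would set up the sift. Fix a parameter $Q$ with $1<Q<\sqrt{z}$, to be chosen later, and let $Z$ denote the number of primes in $(x,y]$ that exceed $Q$. For each prime $q\le Q$ these primes avoid the residue class $0\bmod q$, hence occupy at most $q-1$ of the $q$ classes modulo $q$. The arithmetic (Montgomery) form of the large sieve, in the sharp shape whose additive constant is the length of the sampling interval (Montgomery--Vaughan, and independently Selberg), then yields
$$Z\le \frac{z+Q^2}{L(Q)},\qquad L(Q):=\sum_{q\le Q}\mu^2(q)\prod_{p\mid q}\frac{1}{p-1}=\sum_{q\le Q}\frac{\mu^2(q)}{\phi(q)}.$$
Since there are at most $Q$ primes not exceeding $Q$, the full count satisfies $\pi(y)-\pi(x)\le Z+Q\le \frac{z+Q^2}{L(Q)}+Q$.

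Next I would insert the classical estimate $L(Q)\ge \sum_{m\le Q}\frac1m>\log Q$ --- which follows by expanding $q/\phi(q)=\sum_{d\mid q}\mu^2(d)/\phi(d)$, re-summing, and reindexing by the radical --- and then optimize $Q$. The naive choice $Q\approx\sqrt{z}$ makes the numerator $\approx 2z$ while $\log Q\approx\tfrac12\log z$, which only gives the constant $4$; the improvement to $2$ comes from taking $Q$ a little below $\sqrt{z}$, say $Q\asymp \sqrt{z}/(\log z)^2$, so that $Q^2=o(z)$ but still $\log Q=(\tfrac12+o(1))\log z$, giving
$$\pi(y)-\pi(x)\le \frac{z(1+o(1))}{(\tfrac12+o(1))\log z}+O\!\left(\frac{\sqrt{z}}{\log z}\right)=\frac{2z}{\log z}\,(1+o(1))\quad(z\to\infty).$$
To upgrade this to the uniform \emph{strict} inequality $\pi(y)-\pi(x)<2z/\log z$ valid for every $z>1$, I would, following Montgomery--Vaughan, keep all error terms explicit, use a slightly sharper lower bound for $L(Q)$ together with a carefully chosen $Q=Q(z)$, and dispatch the bounded range of small $z$ by direct inspection --- noting that near $z=1$ the right-hand side $2z/\log z$ is enormous, and that the strict span bound ``length $<z$'' already contributes a little slack.

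The crux of the whole argument, and the reason the constant is exactly $2$ rather than something larger, is the sharp large-sieve inequality with the best-possible additive constant: proving that optimal constant (rather than the much easier $O(z+Q^2)$, which would only yield a larger Brun--Titchmarsh constant, or the Selberg-sieve bound, which is weaker still) is precisely the content of Montgomery and Vaughan's work, and everything after it is routine bookkeeping. For the purposes of the present paper this entire chain is unnecessary to reproduce: one simply invokes \cite{mv} for the input, which is what we do here.
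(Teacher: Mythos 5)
The paper does not prove this theorem at all: it is stated as an external input and attributed to \cite{mv}, with no argument supplied, and it is used in Corollary \ref{Ohelp3} as a black box. You recognize exactly this in your closing sentence, which is the correct reading of how the result functions in this paper. As for the sketch itself, it is an accurate outline of the Montgomery--Vaughan proof: write $z=y-x$, sift the integers in $(x,y]$ by the primes up to a parameter $Q<\sqrt z$, apply the arithmetic form of the large sieve with the sharp additive constant $Q^2$ to get the count bounded by $(z+Q^2)/L(Q)$ with $L(Q)=\sum_{q\le Q}\mu^2(q)/\phi(q)\ge\sum_{m\le Q}1/m>\log Q$, and then take $Q$ slightly below $\sqrt z$ so the numerator stays $(1+o(1))z$ while $\log Q\sim\tfrac12\log z$, giving constant $2$ asymptotically. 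You also correctly flag that upgrading this to the uniform strict inequality for every $z>1$ requires explicit bookkeeping that you are not reproducing, and that the irreducible input making the constant $2$ possible is the sharp large-sieve constant, which is the actual content of \cite{mv}. So your proposal is a faithful account of where the result comes from, but it is more than what the paper does: the paper's ``proof'' is simply the citation, and nothing in the paper depends on knowing the internal mechanism.
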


The following is an immediate corollary of the result above.

\begin{corollary}\label{Ohelp3}
Let $N$ be a positive integer.  One has the following.
\begin{enumerate}
\item For all $y > x > 0$ and all  $t > -\log (y-x)$, one has
$$0 \leq \frac{\pi(e^t y)}{e^t}-\frac{\pi(e^t x)}{e^t} < \frac{2(y-x)}{t + \log (y-x)}.$$
\item For all $x \geq 1 $  and all $t > -\log N$, one has
$$0 \leq \frac{ \pi(e^t x)}{e^t}-\frac{\pi(e^t \lfloor x \rfloor)}{e^t} < \frac{2N}{t +\log N}.$$
\end{enumerate}
\end{corollary}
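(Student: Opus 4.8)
The plan is to obtain both parts directly from Montgomery--Vaughan (Theorem \ref{mv}) via the substitution $(x,y)\mapsto (e^t x, e^t y)$; the sole point requiring attention is ensuring that the length of the relevant interval exceeds $1$, which is the hypothesis $y-x>1$ of Theorem \ref{mv}.

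For statement (1), I would first note that the hypothesis $t > -\log(y-x)$ is exactly equivalent to $e^t(y-x) > 1$, i.e.\ $e^t y - e^t x > 1$, and that $e^t y > e^t x > 0$. Hence Theorem \ref{mv} applies to the pair $(e^t x, e^t y)$ and gives $0 \le \pi(e^t y) - \pi(e^t x) < \dfrac{2(e^t y - e^t x)}{\log(e^t y - e^t x)}$. Since $e^t y - e^t x = e^t(y-x)$ and $\log\!\big(e^t(y-x)\big) = t + \log(y-x)$, dividing by $e^t > 0$ yields the asserted double inequality, the lower bound being just the corresponding lower bound in Theorem \ref{mv}.

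For statement (2), the obstacle is that $e^t(x - \lfloor x\rfloor)$ need not exceed $1$ --- indeed $x - \lfloor x\rfloor < 1$ and $t$ may be nonpositive --- so Theorem \ref{mv} cannot be applied to $(e^t\lfloor x\rfloor, e^t x)$ as such. I would circumvent this by enlarging the interval: since $x \ge 1$ we have $\lfloor x\rfloor \ge 1$, and since $x < \lfloor x\rfloor + 1 \le \lfloor x\rfloor + N$ (using $N \ge 1$), monotonicity of $\pi$ gives $\pi(e^t x) - \pi(e^t\lfloor x\rfloor) \le \pi\!\big(e^t(\lfloor x\rfloor + N)\big) - \pi(e^t\lfloor x\rfloor)$. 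The enlarged interval has length $e^t N$, and the hypothesis $t > -\log N$ is precisely $e^t N > 1$; thus Theorem \ref{mv} applies to $(e^t\lfloor x\rfloor, e^t(\lfloor x\rfloor + N))$ and bounds the right-hand side by $\dfrac{2 e^t N}{\log(e^t N)} = \dfrac{2 e^t N}{t + \log N}$. Dividing by $e^t$ gives (2); the lower bound is again monotonicity, and the same chain covers the case $x \in \ZZ$ (where the difference is $0$, and $0 < \frac{2N}{t+\log N}$ since $t + \log N > 0$). The only step that is not pure bookkeeping is this enlargement in (2), which is also why the parameter $N$ appears in the statement: it furnishes an interval whose length $e^t N$ is forced to exceed $1$ by the hypothesis on $t$.
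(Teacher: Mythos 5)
Your proof is correct and is exactly the argument the paper has in mind when it calls this ``an immediate corollary'' of Theorem \ref{mv}: part (1) is the change of variables $u \mapsto e^t u$ together with the observation that $t > -\log(y-x)$ is the same as $e^t(y-x) > 1$, and part (2) is the same thing after enlarging the interval from $[\lfloor x\rfloor, x]$ to $[\lfloor x\rfloor, \lfloor x\rfloor + N]$ so that its rescaled length $e^t N$ is forced to exceed $1$. The only thing the paper leaves implicit is the enlargement step in (2), which you have spelled out correctly, including the degenerate case $x\in\ZZ$.
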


The following theorem describes the relevance of the function $\beta(t,r)$ (and  thus also the function $\beta(t)$) to the prime counting function.

\begin{theorem}\label{RH1}
Let $\delta$ denote the infimum of the real parts of the zeros of the Riemann zeta function.   Suppose that $M> 0$, $C \geq \mu$, and $\alpha$ are constants such that $|\pi(x) - \li(x) | \leq M x^\alpha \log x$ for all $x \geq C$.  Then $\alpha \geq \delta$ and, for all $t,r,\lambda \in \RR$ with $\lceil r \rceil  \geq \mu e^{-t}$ and $\lambda \geq \beta(t,r)$, and for all integers $n \geq Ce^{-t}$, one has the following.
\begin{enumerate}
\item $\displaystyle \left|\pi (e^t n)-e^t\sum_{r \leq k < n}  \frac{1}{H_k-\gamma+t}  \right| <  M e^{\alpha t} n^\alpha (t+\log n)+ \lambda e^t$.
\item $\displaystyle \left|\pi (e^t n)-e^t\sum_{r \leq k < n}  \frac{1}{H_k-\gamma+t}  \right| <  M e^{\alpha t} n^\alpha (H_n-\gamma+t)+ \lambda e^t$.
\item $\displaystyle \left|\PP(e^t n) -\frac{1}{n}\sum_{r \leq k < n}  \frac{1}{H_k-\gamma+t}   \right| <  \frac{M}{e^{(1-\alpha )t}} \frac{t+\log n}{n^{1-\alpha} }+ \frac{\lambda}{n}$.
\item $\displaystyle \left|\PP(e^t n) -\frac{1}{n}\sum_{r \leq k  < n}  \frac{1}{H_k-\gamma+t}   \right| <  \frac{M}{e^{(1-\alpha )t}} \frac{H_n-\gamma+t}{n^{1-\alpha}} + \frac{\lambda}{n}$.
\end{enumerate}
Conversely, if any of the conditions above hold for all $n \geq C$ for some constants $M > 0$, $C \geq \mu$, $\lambda > 0$, and $t,r, \alpha \in \RR$, then $\alpha \geq \delta$, so there exist constants $M'> 0$ and $C' \geq \mu$ such that $|\pi(x) - \li(x) | \leq M' x^\alpha \log x$
for all $x \geq C'$.
\end{theorem}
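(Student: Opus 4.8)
The plan is to reduce everything to the identity from Theorem~\ref{maintheorem}, namely
\[
\frac{\li(e^t n)}{e^t} = \sum_{r \le k < n} \frac{1}{H_k-\gamma+t} + \beta_n(t,r),
\]
valid whenever $\lceil r \rceil \ge \mu e^{-t}$, together with the bound $0 < \beta_n(t,r) \le \beta(t,r) \le \lambda$ coming from Corollary~\ref{maincor} (the sequence $\beta_n(t,r)$ is increasing to $\beta(t,r)$). Multiplying through by $e^t$ gives
\[
\li(e^t n) = e^t\sum_{r \le k < n} \frac{1}{H_k-\gamma+t} + e^t\beta_n(t,r),
\]
so that
\[
\pi(e^t n) - e^t\sum_{r \le k < n}\frac{1}{H_k-\gamma+t}
= \bigl(\pi(e^t n) - \li(e^t n)\bigr) + e^t\beta_n(t,r).
\]
For (1), I would apply the hypothesis $|\pi(x)-\li(x)| \le M x^\alpha \log x$ with $x = e^t n$, which is legitimate precisely because $n \ge Ce^{-t}$ forces $e^t n \ge C$; this yields $|\pi(e^t n) - \li(e^t n)| \le M e^{\alpha t} n^\alpha(t + \log n)$. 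Combining with $0 < e^t \beta_n(t,r) \le \lambda e^t$ and the triangle inequality gives (1). Statement (2) is then immediate from (1) since $H_n - \gamma + t > t + \log n$ (this is the inequality $H_n - \gamma > \log n$, a consequence of Eq.~(\ref{harmony}) or the fact that $H_n - \gamma - \log n \sim \tfrac{1}{2n} > 0$; more carefully, $H_n - \gamma - \log(n+\sfrac12) > 0$ by Eq.~(\ref{harmony}), and $\log(n+\sfrac12) > \log n$). Statements (3) and (4) follow from (1) and (2) respectively by dividing both sides by $e^t n$, using $\PP(x) = \pi(x)/x$.

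For the assertion $\alpha \ge \delta$: this is exactly Eq.~(\ref{ess}) from the introduction, which states that $\delta$ is the least $\alpha$ with $\pi(x) - \li(x) = O(x^\alpha \log x)$. A hypothesis of the form $|\pi(x) - \li(x)| \le M x^\alpha \log x$ for $x \ge C$ means precisely $\pi(x) - \li(x) = O(x^\alpha \log x)$, so $\alpha$ is one of the exponents for which the $O$-bound holds, whence $\alpha \ge \delta$. This needs no further argument beyond citing the introduction.

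The converse direction is the only part requiring genuine (though still short) work. Suppose, say, condition (1) holds: there are constants $M > 0$, $C \ge \mu$, $\lambda > 0$, and $t, r, \alpha$ with $\lceil r\rceil \ge \mu e^{-t}$ (this should be part of the standing setup, or else forced by $\beta(t,r)$ being defined) such that
\[
\left|\pi(e^t n) - e^t\sum_{r \le k < n}\frac{1}{H_k-\gamma+t}\right| < M e^{\alpha t} n^\alpha(t+\log n) + \lambda e^t
\]
for all integers $n \ge C$. I would rewrite the left side using the exact identity above as
\[
\left|\bigl(\pi(e^t n) - \li(e^t n)\bigr) + e^t \beta_n(t,r)\right|,
\]
so by the triangle inequality
\[
|\pi(e^t n) - \li(e^t n)| < M e^{\alpha t} n^\alpha(t+\log n) + \lambda e^t + e^t \beta_n(t,r) \le M e^{\alpha t} n^\alpha(t+\log n) + 2\lambda e^t
\]
(using $\beta_n(t,r) \le \lambda$, or $\le \beta(t,r)$ which is finite). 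Now as $n \to \infty$ with $t$ fixed, writing $x = e^t n$ so $n = xe^{-t}$, the right-hand side is $O(x^\alpha \log x) + O(1) = O(x^\alpha \log x)$ for $x$ ranging over the arithmetic-progression-like set $\{e^t n : n \ge C\}$. To upgrade this to a bound for \emph{all} real $x \ge C'$, I would interpolate using Corollary~\ref{Ohelp3}(2) (or Theorem~\ref{mv} directly): for $x$ between consecutive points $e^t n$ and $e^t(n+1)$, $\pi(x)$ differs from $\pi(e^t n)$ by $O(1)$ (at most $\frac{2e^t}{t+\log\text{(something)}}$, bounded), and $\li(x)$ differs from $\li(e^t n)$ by $O(1)$ as well by Lemma~\ref{lem0a}, so the $O(x^\alpha \log x)$ bound persists on the gaps with a possibly larger constant $M'$. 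Hence $\pi(x) - \li(x) = O(x^\alpha \log x)$, i.e. $|\pi(x) - \li(x)| \le M' x^\alpha \log x$ for all $x \ge C'$ for suitable $M', C'$, and then $\alpha \ge \delta$ by Eq.~(\ref{ess}) again. The cases where condition (2), (3), or (4) holds instead are handled the same way, using $H_n - \gamma + t = O(\log n)$ (again from Eq.~(\ref{harmony})) to convert the $H_n$-version into the $\log n$-version, and multiplying by $n$ to convert the $\PP$-versions into the $\pi$-versions. I expect the main obstacle — modest as it is — to be the bookkeeping in the converse: correctly tracking that the identity from Theorem~\ref{maintheorem} requires $\lceil r \rceil \ge \mu e^{-t}$ so that the sum starts at the right place, and handling the interpolation across the gaps $e^t n < x < e^t(n+1)$ cleanly using the Montgomery--Vaughan bound so that no $n$-dependence sneaks into the final constant.
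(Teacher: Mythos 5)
Your proposal is correct and follows essentially the same argument as the paper: forward direction via the exact decomposition $\pi(e^t n) - e^t\sum = (\pi(e^t n) - \li(e^t n)) + e^t\beta_n(t,r)$, the hypothesis on $\pi-\li$, and the bound $\beta_n(t,r) < \beta(t,r) \le \lambda$ from Corollary~\ref{maincor}; converse direction by reversing the triangle inequality to get $\pi(e^tn) - \li(e^tn) = O(n^\alpha\log n)$ along integers, then interpolating to real $x$ using Corollary~\ref{Ohelp3}(2) and Lemma~\ref{lem0a}, and finally invoking Eq.~(\ref{ess}). The only cosmetic differences from the paper's proof are that you omit the explicit ``without loss of generality $\alpha>0$'' reduction (harmless, since the interpolation works for any $\alpha$ up to a constant factor $2^{|\alpha|}$), and you write $\beta_n(t,r)\le\beta(t,r)$ where strictness ($\beta_n<\beta$, from the lower bound in Corollary~\ref{maincor}) is actually what yields the strict $<$ in items (1)--(4).
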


\begin{proof}
Let $\alpha$, $M$, $C$, $t$, $r$, and $\lambda$ be given as in the forward hypothesis.
By Corollary \ref{maincor}, for all $n \geq \lceil r \rceil \geq \mu e^{-t}$, one has
$$0 \leq \frac{\li (e^t n)}{e^t}  - \sum_{r \leq k < n}   \frac{1}{H_k-\gamma+t} = \beta_n(t, r) < \beta(t,r).$$
Moreover, by hypothesis and the obvious change of variables one has
$$\left |\frac{\pi (e^t n)}{e^t}  -\frac{\li (e^t n)}{e^t} \right | \leq \frac{M}{e^{(1-\alpha )t}} n^\alpha (t+\log n)$$
for all $n \geq Ce^{-t} \geq \mu e^{-t}$.  Therefore, by the triangle inequality, one has
 $$\left|\frac{\pi (e^t n)}{e^t}-\sum_{r \leq k < n}  \frac{1}{H_k-\gamma+t}  \right| <  \frac{M}{e^{(1-\alpha )t}} n^\alpha (t+\log n)+ \beta(t,r)$$
for all $n \geq Ce^{-t}$, while also $t+\log n < H_n-\gamma +t$ for all $n$.  The forward direction of the theorem follows.

Conversely, suppose that  $\alpha$, $M$, $C$, $t$, $r$, and $\lambda$ are constants satisfying any of the hypotheses (1)--(4).  We wish to show that $\alpha \geq \delta$.  We may suppose without loss of generality that $\alpha > 0$.   Now, since $\log n \sim t+\log n \sim H_n - \gamma+t \ (n \to \infty)$, any one of statements (1)--(4) implies that
$$\frac{\pi(e^t n)}{e^t}  - \sum_{r \leq k < n}  \frac{1}{H_k-\gamma+t}  = O( n^\alpha \log n) \ (n \to \infty),$$
where the $O$ constant depends on the given constants.   Moreover,  by Corollary \ref{maincor} one has
$$\frac{\li (e^t n)}{e^t}  -\sum_{r \leq k < n}    \frac{1}{H_k-\gamma+t} = O(1) \ (n \to \infty),$$
so it follows that
$$\frac{\pi(e^t n)}{e^t}  - \frac{\li (e^t n)}{e^t}  = O( n^\alpha\log n) \ (n \to \infty).$$
We wish to show that we can replace the discrete variable $n$ in the above estimate with a continuous variable $x$.  Choose any positive integer $N > e^{-t}$, so that $t > - \log N$.   Then, by Corollary \ref{Ohelp3}, for all $x > 1$ one has
$$0 \leq \frac{ \pi(e^t x)}{e^t}-\frac{\pi(e^t \lfloor x \rfloor)}{e^t} < \frac{2N}{t +\log N},$$
so that 
$$\frac{ \pi(e^t x)}{e^t}-\frac{\pi(e^t \lfloor x \rfloor)}{e^t} = O(1) \ (x \to \infty).$$
Similarly, by Lemma \ref{lem0a}, one has
$$\frac{ \li(e^t x)}{e^t}-\frac{\li(e^t \lfloor x \rfloor)}{e^t} = O(1) \ (x \to \infty).$$
Therefore, one has
\begin{align*}
\frac{\pi(e^t x)}{e^t}  - \frac{\li (e^t x)}{e^t}  & =   \left(\frac{\pi(e^t\lfloor  x \rfloor )}{e^t}- \frac{\li(e^t\lfloor  x \rfloor )}{e^t}\right) + \left( \frac{\pi(e^t x)}{e^t} -  \frac{\pi(e^t \lfloor x \rfloor )}{e^t} \right)- \left (\frac{\li (e^t x)}{e^t}  - \frac{\li(e^t\lfloor  x \rfloor )}{e^t}\right) \\ &  = O( \lfloor x \rfloor^\alpha \log \lfloor x \rfloor) \ (x \to \infty) \\
&  = O( x^\alpha \log  x ) \ (x \to \infty)
\end{align*}
It follows, then, that $\pi( x)  - \li (x)  = O( x^\alpha\log x) \ (x \to \infty),$
and therefore $\alpha \geq \delta$.
\end{proof}

The special case where $r = \mu e^{-t}$ yields the following.

\begin{corollary}\label{RH2}
Let $\delta$ denote the infimum of the real parts of the zeros of the Riemann zeta function.   Suppose that $M> 0$, $C \geq \mu$, and $\alpha$ are constants such that $|\pi(x) - \li(x) | \leq M x^\alpha \log x$ for all $x \geq C$.  Then $\alpha \geq \delta$ and, for all $t,\lambda \in \RR$ with $\lambda \geq \beta(t)$ (e.g., $\lambda = \frac{1}{\log \mu}$), and for all integers $n \geq Ce^{-t}$, one has the following.
\begin{enumerate}
\item $\displaystyle \left|\pi (e^t n)-e^t\sum_{k = \lceil \mu e^{-t} \rceil}^{n-1}  \frac{1}{H_k-\gamma+t}  \right| <  M e^{\alpha t} n^\alpha (t+\log n)+  \lambda e^t$.
\item $\displaystyle \left|\pi (e^t n)-e^t\sum_{k = \lceil \mu e^{-t} \rceil}^{n-1}  \frac{1}{H_k-\gamma+t}  \right| <  M e^{\alpha t} n^\alpha (H_n-\gamma+t)+ \lambda e^t$.
\item $\displaystyle \left|\PP(e^t n) -\frac{1}{n}\sum_{k = \lceil \mu e^{-t} \rceil}^{n-1}  \frac{1}{H_k-\gamma+t}   \right| <  \frac{M}{e^{(1-\alpha )t}} \frac{t+\log n}{n^{1-\alpha} }+ \frac{\lambda}{n}$.
\item $\displaystyle \left|\PP(e^t n) -\frac{1}{n} \sum_{k = \lceil \mu e^{-t} \rceil}^{n-1}  \frac{1}{H_k-\gamma+t}   \right| <  \frac{M}{e^{(1-\alpha )t}} \frac{H_n-\gamma+t}{n^{1-\alpha}}+\frac{\lambda}{n}$.
\end{enumerate}
Conversely, if any of the conditions above hold for all $n \geq C$ for some constants $M > 0$, $C \geq \mu$, $\lambda > 0$, and $t,r, \alpha \in \RR$, then $\alpha \geq \delta$, so there exist constants $M'> 0$ and $C' \geq \mu$ such that
\begin{align*}
|\pi(x) - \li(x) | \leq M' x^\alpha \log x
\end{align*}
for all $x \geq C'$.
\end{corollary}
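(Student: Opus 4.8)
The plan is to obtain Corollary \ref{RH2} directly from Theorem \ref{RH1} by specializing the free parameter $r$ to $\mu e^{-t}$, so that essentially no new argument is required beyond checking that the hypotheses of the theorem are met and that the resulting statements match verbatim.

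First I would observe that the choice $r = \mu e^{-t}$ automatically satisfies the hypothesis $\lceil r \rceil \geq \mu e^{-t}$ of Theorem \ref{RH1}, since the ceiling of any real number is at least that number; indeed $\lceil \mu e^{-t} \rceil = R_t$ by definition. With this choice, the inner sum $\sum_{r \leq k < n} \frac{1}{H_k - \gamma + t}$ appearing in statements (1)--(4) of Theorem \ref{RH1} becomes exactly $\sum_{k = \lceil \mu e^{-t} \rceil}^{n-1} \frac{1}{H_k - \gamma + t}$, which is precisely the sum occurring in Corollary \ref{RH2}.

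Next I would recall that $\beta(t) = \beta(t, \mu e^{-t})$ by definition, so the condition $\lambda \geq \beta(t,r)$ in Theorem \ref{RH1} becomes $\lambda \geq \beta(t)$, matching the hypothesis of the corollary. For the parenthetical example $\lambda = \frac{1}{\log \mu}$, I would invoke statement (5) of the preceding Proposition, which asserts $\beta(t) < \frac{1}{\log \mu}$ for all $t \in \RR$; hence $\lambda = \frac{1}{\log \mu}$ always satisfies $\lambda \geq \beta(t)$. With these substitutions, the four inequalities in the forward direction of Theorem \ref{RH1}, together with the conclusion $\alpha \geq \delta$, specialize verbatim to those of Corollary \ref{RH2}. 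The converse is likewise just the converse of Theorem \ref{RH1} applied with the same $r = \mu e^{-t}$: if any of (1)--(4) holds for all $n \geq C$, then $\alpha \geq \delta$, and the existence of constants $M' > 0$ and $C' \geq \mu$ with $|\pi(x) - \li(x)| \leq M' x^\alpha \log x$ for all $x \geq C'$ follows immediately.

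There is no genuine obstacle here, since the corollary is a pure specialization; the only points requiring care are bookkeeping ones — confirming that the index range $\lceil \mu e^{-t} \rceil \leq k \leq n - 1$ produced by $r = \mu e^{-t}$ is exactly the range displayed in the corollary, and citing statement (5) of the earlier Proposition to justify the stated example value $\lambda = \frac{1}{\log \mu}$.
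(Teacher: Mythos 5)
Your proposal is correct and matches the paper exactly: the paper introduces Corollary \ref{RH2} with the remark ``The special case where $r = \mu e^{-t}$ yields the following,'' giving no further proof, so the corollary is obtained by the same direct specialization of Theorem \ref{RH1} that you describe, with $\beta(t)=\beta(t,\mu e^{-t})$ and the bound $\beta(t)<\frac{1}{\log\mu}$ justifying the example value of $\lambda$.
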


\section{Riemann hypothesis equivalents using harmonic numbers}

In 1976 \cite{sch}, L.\ Schoenfeld proved  that the Riemann hypothesis is equivalent to    Eq.\ (\ref{sch}).
From Schoenfeld's result, we can relate Theorem \ref{RH1} to the Riemann hypothesis as follows.

\begin{theorem}\label{RH1aa}
Suppose that the Riemann hypothesis implies that
\begin{align*}
|\pi(x)- \li(x)| \leq M\sqrt{x} \log x
\end{align*}
for all $x \geq C$, for constants $M > 0$ and $C \geq \mu$.  For example, this implication holds for $M = \frac{1}{8\pi}$ and $C = 2657$.    Let $t,r,\lambda \in \RR$ with $r > e^{-t}$ and $\lambda \geq \beta(t,r)$.     Then the Riemann hypothesis holds if and only if any of the following conditions hold for all $n \geq Ce^{-t}$.
\begin{enumerate}
\item $\displaystyle \left|\pi (e^t n)-e^t\sum_{r \leq k < n}  \frac{1}{H_k-\gamma+t}  \right| \leq  M e^{t/2}\sqrt{n} (t+\log n) + \lambda e^{t}$.
\item $\displaystyle \left|\pi (e^t n)-e^t\sum_{r \leq k < n}  \frac{1}{H_k-\gamma+t}  \right| \leq  Me^{t/2} \sqrt{n} (H_n-\gamma+t) + \lambda e^t$.
\item $\displaystyle \left|\PP(e^t n) -\frac{1}{n}\sum_{r \leq k < n}  \frac{1}{H_k-\gamma+t}   \right| \leq \frac{M}{e^{t/2}} \frac{t+\log n}{\sqrt{n}} + \frac{\lambda}{n}$.
\item $\displaystyle \left|\PP(e^t n) -\frac{1}{n}\sum_{r \leq k  < n}  \frac{1}{H_k-\gamma+t}   \right| \leq  \frac{M}{e^{t/2}} \frac{H_n-\gamma+t}{\sqrt{n}}+ \frac{\lambda}{n}$.
\end{enumerate}
\end{theorem}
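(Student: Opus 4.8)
The plan is to read this off from Theorem \ref{RH1} by specializing to $\alpha = \frac12$, together with the two standard facts about $\delta$ recalled in the Introduction: unconditionally $\frac12 \le \delta \le 1$, and the Riemann hypothesis is equivalent to $\delta = \frac12$ (see, e.g., \cite[Theorem 15.2]{mont}). The stated example $M = \frac1{8\pi}$, $C = 2657$ is exactly Schoenfeld's reformulation (\ref{sch}), which asserts $|\pi(x)-\li(x)| < \frac1{8\pi}\sqrt x\,\log x$ for all $x \ge 2657$, and $2657 \ge \mu$; so verifying that example amounts to quoting (\ref{sch}).

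For the forward implication, assume the Riemann hypothesis. Then $\delta = \frac12$, and by the standing hypothesis of the theorem we have $|\pi(x)-\li(x)| \le M x^{1/2}\log x$ for all $x \ge C$, which is precisely the hypothesis of Theorem \ref{RH1} with $\alpha = \frac12$. Given $\lambda \ge \beta(t,r)$ and an integer $n \ge Ce^{-t}$, and observing that the required lower bound $\lceil r\rceil \ge \mu e^{-t}$ holds (this is the point where I would double-check that the hypothesis $r > e^{-t}$, perhaps strengthened as in Theorem \ref{RH1}, together with $n \ge Ce^{-t} \ge \mu e^{-t}$, places us in the applicable range), the forward conclusion of Theorem \ref{RH1} supplies the strict inequalities in its parts (1)--(4) with $\alpha = \frac12$; each such strict inequality immediately gives the corresponding $\le$ inequality asserted here.

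For the converse, assume one of (1)--(4) holds for all integers $n \ge Ce^{-t}$. Using $\log n \sim t+\log n \sim H_n-\gamma+t \ (n \to \infty)$ together with the bound $\frac{\li(e^t n)}{e^t} - \sum_{r\le k<n}\frac1{H_k-\gamma+t} = O(1)$ from Corollary \ref{maincor}, one gets
$$\frac{\pi(e^tn)}{e^t} - \frac{\li(e^tn)}{e^t} = O\!\left(n^{1/2}\log n\right) \quad (n \to \infty),$$
and then, exactly as in the last part of the proof of Theorem \ref{RH1} (passing to $\lfloor x\rfloor$ via Corollary \ref{Ohelp3} and Lemma \ref{lem0a}), $\pi(x)-\li(x) = O(x^{1/2}\log x)$ for all real $x$. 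By (\ref{ess}) this forces $\delta \le \frac12$, hence $\delta = \frac12$, which is the Riemann hypothesis. Equivalently, one simply invokes the converse part of Theorem \ref{RH1} with $\alpha = \frac12$: the $\le$ hypotheses here yield the same $O$-estimate, hence the same conclusion $\alpha \ge \delta$, as the $<$ hypotheses there, and requiring the condition only for all $n \ge Ce^{-t}$ is no weaker than what an $n\to\infty$ estimate needs.

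I expect no genuine obstacle here: the analytic content has already been absorbed into Theorem \ref{RH1} and into Schoenfeld's (\ref{sch}). The only points needing care are bookkeeping — reconciling the strict inequalities of Theorem \ref{RH1} with the non-strict ones here, checking that the range ``$n \ge Ce^{-t}$'' suffices for the asymptotic estimate used in the converse, and recalling the unconditional bound $\delta \ge \frac12$ — so the write-up should be short.
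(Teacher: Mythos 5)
Your approach is the same one the paper intends: Theorem \ref{RH1aa} is stated as an immediate consequence of Theorem \ref{RH1} (with $\alpha = \frac12$), Schoenfeld's inequality (\ref{sch}), and the two unconditional facts $\delta \geq \frac12$ and ``RH $\Leftrightarrow \delta = \frac12$.'' The converse direction is exactly as you describe: each of (1)--(4) gives $\tfrac{\pi(e^tn)}{e^t} - \sum_{r\le k<n}\frac1{H_k-\gamma+t} = O(n^{1/2}\log n)$, Corollary \ref{maincor} absorbs the sum into $\tfrac{\li(e^tn)}{e^t} + O(1)$, and the passage from integer $n$ to real $x$ via Corollary \ref{Ohelp3} and Lemma \ref{lem0a} yields $\pi(x)-\li(x)=O(x^{1/2}\log x)$, whence $\delta\le\frac12$ by (\ref{ess}).

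The caveat you flagged is real and should not be waved away. Theorem \ref{RH1} requires $\lceil r\rceil \geq \mu e^{-t}$, while Theorem \ref{RH1aa} only asks $r > e^{-t}$, and the latter does \emph{not} imply the former (take, e.g., $t=-\log 3$ and $r$ slightly larger than $3$, so $\lceil r\rceil=4 < 3\mu\approx 4.35$). Under only $r>e^{-t}$, the leading term $\frac{\li(e^t\lceil r\rceil)}{e^t}$ of $\beta(t,r) = \frac{\li(e^t\lceil r\rceil)}{e^t}+\theta(t,\lceil r\rceil)$ can be negative and in fact arbitrarily negative (since $\li\to-\infty$ at $1^+$), so $\beta(t,r)$ itself can be negative. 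Then the hypothesis $\lambda\ge\beta(t,r)$ permits $\lambda<0$, and the forward direction can no longer be deduced from the Schoenfeld bound: the proof of Theorem \ref{RH1} uses $0\le\beta_n(t,r)<\beta(t,r)\le\lambda$ to control the error after the triangle inequality, and that chain breaks when $\beta_n(t,r)<0$. Worse, for $\lambda$ sufficiently negative the right-hand sides of (1)--(4) can even become negative at $n=\lceil Ce^{-t}\rceil$, making the asserted inequality impossible, so the ``only if'' genuinely fails for some admissible $(t,r,\lambda)$. The fix is to carry over the hypothesis $\lceil r\rceil\ge\mu e^{-t}$ from Theorem \ref{RH1} (or equivalently to assume $\beta(t,r)\ge 0$, which is guaranteed precisely when $\lceil r\rceil\ge\mu e^{-t}$). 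Note that your hope that $n\ge Ce^{-t}\ge\mu e^{-t}$ might rescue the argument does not pan out: the sign of $\beta_n(t,r)$ is governed by the \emph{lower} endpoint $\lceil r\rceil$ of the sum, not by $n$. With that correction in the hypothesis, your write-up is complete and correct.
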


\begin{corollary}\label{RH1a}
Suppose that the Riemann hypothesis implies that
\begin{align*}
|\pi(x)- \li(x)| \leq M\sqrt{x} \log x
\end{align*}
for all $x \geq C$, for constants $M > 0$ and $C \geq \mu$.  For example, this implication holds for $M = \frac{1}{8\pi}$ and $C = 2657$.    Let $t,\lambda \in \RR$ with $\lambda \geq \beta(t)$ (e.g., $\lambda = \frac{1}{\log \mu}$).     Then the Riemann hypothesis holds if and only if any of the following conditions hold for all $n \geq Ce^{-t}$.
\begin{enumerate}
\item $\displaystyle \left|\pi (e^t n)-e^t\sum_{r \leq k < n}  \frac{1}{H_k-\gamma+t}  \right| \leq  M e^{t/2}\sqrt{n} (t+\log n) + \lambda e^{t}$.
\item $\displaystyle \left|\pi (e^t n)-e^t\sum_{r \leq k < n}  \frac{1}{H_k-\gamma+t}  \right| \leq  Me^{t/2} \sqrt{n} (H_n-\gamma+t) + \lambda e^t$.
\item $\displaystyle \left|\PP(e^t n) -\frac{1}{n}\sum_{r \leq k < n}  \frac{1}{H_k-\gamma+t}   \right| \leq \frac{M}{e^{t/2}} \frac{t+\log n}{\sqrt{n}} + \frac{\lambda}{n}$.
\item $\displaystyle \left|\PP(e^t n) -\frac{1}{n}\sum_{r \leq k  < n}  \frac{1}{H_k-\gamma+t}   \right| \leq  \frac{M}{e^{t/2}} \frac{H_n-\gamma+t}{\sqrt{n}}+ \frac{\lambda}{n}$.
\end{enumerate}
\end{corollary}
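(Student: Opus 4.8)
The plan is to obtain Corollary \ref{RH1a} as the special case $r = \mu e^{-t}$ of Theorem \ref{RH1aa}. For this choice of $r$ one has, by Definition \ref{maindef} together with the subsequent definition of $\beta(t)$, that $\beta(t,r) = \beta(t,\mu e^{-t}) = \beta(t)$, and that $\sum_{r \le k < n}\frac{1}{H_k-\gamma+t}$ is precisely $\sum_{k = \lceil \mu e^{-t}\rceil}^{n-1}\frac{1}{H_k-\gamma+t}$; so, after this substitution, the four conditions of Theorem \ref{RH1aa} become verbatim the four conditions of the corollary, and ``for all $n \ge Ce^{-t}$'' is unchanged.

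First I would verify the hypotheses of Theorem \ref{RH1aa} for this value of $r$. Since the Ramanujan--Soldner constant satisfies $\mu = 1.451369\ldots > 1$, we have $r = \mu e^{-t} > e^{-t}$ for every $t \in \RR$, so the standing requirement $r > e^{-t}$ holds. Given $\lambda \ge \beta(t) = \beta(t,\mu e^{-t})$, the requirement $\lambda \ge \beta(t,r)$ also holds. The remaining hypothesis---that the Riemann hypothesis implies $|\pi(x)-\li(x)| \le M\sqrt{x}\log x$ for all $x \ge C$, realized for instance by $M = \frac{1}{8\pi}$ and $C = 2657$ via Schoenfeld's equivalence (\ref{sch})---is identical in the two statements. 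Hence Theorem \ref{RH1aa}, applied with $r = \mu e^{-t}$, yields exactly the asserted equivalence of the Riemann hypothesis with each of (1)--(4).

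It remains to justify the parenthetical claim that $\lambda = \frac{1}{\log\mu}$ is an admissible choice, i.e. that $\frac{1}{\log\mu} \ge \beta(t)$ for every $t \in \RR$. This is precisely statement (5) of the last Proposition of Section 3 (the Proposition bounding $\beta(t)$), which asserts $\beta(t) < \limsup_{u\to-\infty}\beta(u) = \frac{1}{\log\mu}$; citing it completes the argument.

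I do not expect any genuine obstacle here: all of the analytic content already resides in Theorem \ref{RH1aa} (hence in Theorem \ref{RH1}, Corollary \ref{maincor}, and Schoenfeld's bound) and in the Proposition bounding $\beta$. Corollary \ref{RH1a} is merely the clean, parameter-free reformulation obtained by using the Ramanujan--Soldner constant to fix the lower index of summation so that the correction term $\beta(t)$ is positive and bounded by $\frac{1}{\log\mu}$.
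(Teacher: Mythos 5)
Your proposal is correct and matches the intended derivation: Corollary \ref{RH1a} is obtained from Theorem \ref{RH1aa} by setting $r = \mu e^{-t}$, noting that $\mu > 1$ guarantees $r > e^{-t}$, that $\beta(t,\mu e^{-t}) = \beta(t)$ by definition, and that statement (5) of the Proposition in Section 3 supplies the uniform bound $\beta(t) < \frac{1}{\log\mu}$ justifying the parenthetical choice $\lambda = \frac{1}{\log\mu}$. The paper gives no separate proof of this corollary, and the specialization you describe is exactly the route intended.
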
 

Theorem \ref{RH1aa} and Corollary \ref{RH1a} can be used to yield a number of arithmetical equivalences of the Riemann hypothesis.  For example, letting $t$ equal $\gamma$, $\gamma+1$, $0$, $\log 2$, and $1$, respectively,  and employing the upper and lower bounds for $\beta(t)$ provided in Table 2, we obtain the several  Riemann hypothesis equivalents listed in Corollaries \ref{RH5} through \ref{RH9} below.  (For the sake of brevity we express them all in terms of $\PP(x)$ instead of $\pi(x)$.)

\begin{corollary}\label{RH5}
Let $\lambda \geq \beta(\gamma)$, e.g., $\lambda = 0.4986013304$.
The Riemann hypothesis holds if and only if any of the following equivalent conditions hold for all  integers $n \geq 803$.
\begin{enumerate}
\item $\displaystyle \left|\PP(e^\gamma n) -\frac{1}{n} \sum_{k = 1}^{n-1}   \frac{1}{H_k}   \right| < \frac{1}{8 \pi e^{\gamma/2}}  \frac{\log n+\gamma}{\sqrt{n}}+ \frac{\lambda}{n}$.
\item $\displaystyle \left|\PP(e^\gamma n) -\frac{1}{n} \sum_{k = 1}^{n-1}   \frac{1}{H_k}   \right| < \frac{1}{8 \pi e^{\gamma/2}} \frac{H_n}{\sqrt{n}}+\frac{\lambda}{n}$.
\end{enumerate}
\end{corollary}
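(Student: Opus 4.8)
The plan is to apply Corollary \ref{RH1a} with the specific value $t = \gamma$, and then simplify the resulting expressions using the identity $H_k - \gamma + \gamma = H_k$ together with $R_\gamma = \lceil \mu e^{-\gamma}\rceil$. First I would compute $\mu e^{-\gamma} = 1.451369\ldots / 1.781072\ldots \approx 0.8148$, so $R_\gamma = \lceil \mu e^{-\gamma}\rceil = 1$; this is why the sum in the statement runs $\sum_{k=1}^{n-1}$ rather than starting at a larger index. Substituting $t = \gamma$ into condition (3) of Corollary \ref{RH1a} turns $t + \log n$ into $\gamma + \log n$, turns $e^{t/2}$ into $e^{\gamma/2}$, turns $\frac{M}{e^{t/2}}$ with $M = \frac{1}{8\pi}$ into $\frac{1}{8\pi e^{\gamma/2}}$, and turns $\frac{1}{H_k - \gamma + t} = \frac{1}{H_k - \gamma + \gamma}$ into $\frac{1}{H_k}$; this yields condition (1) of the corollary verbatim. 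Condition (2) follows identically from condition (4) of Corollary \ref{RH1a} with the same substitution, using $H_n - \gamma + \gamma = H_n$.

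The one genuine point requiring care is the threshold $n \geq 803$. Corollary \ref{RH1a} gives the equivalence for $n \geq C e^{-t}$ with $C = 2657$ (the Schoenfeld constant) and $t = \gamma$, so the condition is guaranteed for $n \geq 2657 e^{-\gamma} = 2657 / 1.781072\ldots \approx 1491.8$, i.e. for $n \geq 1492$. To push the threshold down to $803$ one must verify directly — for the finitely many integers $n$ with $803 \leq n \leq 1491$ — that the displayed inequalities hold unconditionally (this does not affect the "only if" direction, which is automatic from Corollary \ref{RH1a}; and for the "if" direction one needs the inequality to be *equivalent* to RH, which is where the finite check enters). This is exactly the purpose of the Python verification by Lubner and Brice acknowledged in the introduction. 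So the argument is: the infinite tail $n \geq 1492$ comes from Corollary \ref{RH1a}, and the finite range $803 \leq n < 1492$ is handled by the explicit computation using known values of $\pi(e^\gamma n)$, noting that $\frac{1}{8\pi e^{\gamma/2}}\frac{H_n}{\sqrt n} + \frac{\lambda}{n}$ is a concrete computable bound.

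A secondary detail is the strictness: Corollary \ref{RH1a} states its inequalities with $\leq$, whereas Corollary \ref{RH5} states them with $<$. This is harmless because the error bound $\beta(t,r)$ appearing in Corollary \ref{RH1a} satisfies $\beta(\gamma) < \lambda$ strictly when $\lambda = 0.4986013304 > \beta(\gamma) \approx 0.4986$ (indeed $\beta(\gamma) < 0.4986013304$ by the upper bound in Table 2), so there is slack that upgrades $\leq$ to $<$; alternatively, one invokes the remark in the introduction that any choice of larger constant still yields an equivalent. I would also remark that $\frac{1}{n}\sum_{k=1}^{n-1}\frac{1}{H_k}$ is the discrete-integral approximation $\int_1^{n+0^-} \frac{d\nu(u)}{H_u}$ to $\PP(e^\gamma n)$ promised in Section 4, tying the statement back to the paper's theme.

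The main obstacle is not conceptual — the forward direction is a pure substitution — but rather the bookkeeping of the threshold $803$ and ensuring the finite verification covers precisely the gap $[803, 1492)$; this is the only place where something beyond Corollary \ref{RH1a} is invoked, and it rests on the availability of exact values of $\pi(x)$ for $x \leq e^\gamma \cdot 1491 \approx 2656$, which are well within known tables.
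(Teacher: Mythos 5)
Your proposal matches the paper's own proof: apply Corollary~\ref{RH1a} at $t = \gamma$ (where $R_\gamma = \lceil \mu e^{-\gamma}\rceil = 1$ so the sum starts at $k=1$ and $H_k - \gamma + t = H_k$), which gives the equivalence for $n \geq 2657 e^{-\gamma} > 1491$, and then close the gap $803 \leq n \leq 1491$ by direct computation. The only small detail you did not make fully explicit is that the finite verification must be carried out with the \emph{smallest} admissible $\lambda$ — since $\lambda$ appears positively on the right-hand side, the inequality is hardest to satisfy at $\lambda = \beta(\gamma)$, and since $\beta(\gamma)$ is not known exactly, the paper checks the inequalities using a certified rational \emph{lower} bound of $\beta(\gamma)$ (namely $0.49859 < 0.4985987518 < \beta(\gamma)$), which then covers every permitted $\lambda \geq \beta(\gamma)$; your remark about the Python check gestures at this but does not say why a lower bound, rather than the quoted upper bound, is the correct quantity to plug in.
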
 

\begin{proof}
By Corollary \ref{RH1a}, the Riemann hypothesis holds if and only if any of the conditions hold for all $n \geq 2657 e^{-\gamma} > 1491$, and the given inequalities can be verified directly to hold for all $803 \leq n \leq 1491$, even for the lower bound $\lambda = 0.49859$ of $\beta(\gamma)$. 
\end{proof}

\begin{corollary}\label{RH6}
Let $\lambda \geq \beta(\gamma)+1 = \beta(\gamma, \mu e^{-\gamma} + 1)$, e.g. $\lambda = 1.4986013304$.
The Riemann hypothesis holds if and only if any of the following equivalent conditions hold for all  positive integers $n$.
\begin{enumerate}
\item $\displaystyle \left|\PP(e^\gamma n) -\frac{1}{n} \sum_{k = 2}^{n-1}   \frac{1}{H_k}   \right| < \frac{1}{8 \pi e^{\gamma/2}}  \frac{\log n+\gamma}{\sqrt{n}}+ \frac{\lambda}{n}$.
\item $\displaystyle \left|\PP(e^\gamma n) -\frac{1}{n} \sum_{k = 2}^{n-1}   \frac{1}{H_k}   \right| < \frac{1}{8 \pi e^{\gamma/2}} \frac{H_n}{\sqrt{n}}+\frac{\lambda}{n}$.
\end{enumerate}
\end{corollary}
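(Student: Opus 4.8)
The plan is to derive the corollary as the instance $t=\gamma$, $r=\mu e^{-\gamma}+1$, $\alpha=\sfrac12$, $M=\frac1{8\pi}$, $C=2657$ of Theorem \ref{RH1aa}, and then to push the range of validity from $n\geq Ce^{-\gamma}$ down to every positive integer by a finite unconditional computation. First I would record the two elementary facts that make the specialization match the stated inequalities. Since $\mu e^{-\gamma}=0.81\ldots$ one has $\lceil\mu e^{-\gamma}+1\rceil=2$, so the sum $\sum_{r\leq k<n}$ appearing in Theorem \ref{RH1aa} with $r=\mu e^{-\gamma}+1$ is exactly $\sum_{k=2}^{n-1}$, and $r>e^{-\gamma}$, so the hypotheses are met. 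Moreover, comparing Definition \ref{maindef} for $r=\mu e^{-\gamma}$ and $r=\mu e^{-\gamma}+1$ shows that for every $x$ the difference $\beta_x(\gamma,\mu e^{-\gamma}+1)-\beta_x(\gamma,\mu e^{-\gamma})$ equals the single omitted term $\frac1{H_1}=1$, whence $\beta(\gamma,\mu e^{-\gamma}+1)=\beta(\gamma)+1$, the constant in the hypothesis. With $t=\gamma$ one has $H_k-\gamma+t=H_k$, $e^{t/2}=e^{\gamma/2}$, $\frac{M}{e^{t/2}}=\frac1{8\pi e^{\gamma/2}}$ and $t+\log n=\gamma+\log n$, so conditions (3) and (4) of Theorem \ref{RH1aa} become conditions (1) and (2) of the corollary; the strict ``$<$'' (in place of the ``$\leq$'' of Theorem \ref{RH1aa}) is delivered directly by the forward conclusions of Theorem \ref{RH1}, which are already strict, while the converse direction needs only the non-strict form and so follows from Theorem \ref{RH1aa}. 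Since $2657\,e^{-\gamma}=1491.8\ldots$, Theorem \ref{RH1aa} gives: the Riemann hypothesis holds if and only if (1), equivalently (2), holds for every integer $n\geq 1492$.

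It remains to check, unconditionally, that (1) and (2) hold for every integer $1\leq n\leq 1491$. Granting this, the full equivalence follows: if the Riemann hypothesis holds, then (1) holds for $n\geq 1492$ by the previous paragraph and for $1\leq n\leq 1491$ by the check, hence for all $n\geq 1$, and likewise (2); conversely, if (1) --- or (2) --- holds for all positive integers $n$, then in particular for all $n\geq 1492$, whence the Riemann hypothesis; and finally (1) implies (2) since $\gamma+\log n<H_n$ for every $n\geq 1$ by Eq.\ (\ref{harmony}). For the finite check, $\pi(e^\gamma n)$ --- that is, $\pi(m)$ for integers $m\leq\lfloor 1491\,e^\gamma\rfloor$ --- is tabulated, while $\sum_{k=2}^{n-1}\frac1{H_k}$ and the right-hand sides are explicit and computable, and since enlarging $\lambda$ only enlarges the right-hand side it suffices to verify the inequalities once, at the least admissible value of $\lambda$, for which one may use the lower bound $1+0.4985987518$ of $\beta(\gamma)+1$ from Table 2. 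This is precisely the computation carried out by the code acknowledged in the introduction.

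The only genuine departure from Corollary \ref{RH5} is this last step: deleting the $k=1$ term from the approximating sum and compensating by $\lambda\mapsto\lambda+1$ replaces the effective error cushion $e^\gamma(\beta(\gamma)+\beta_n(\gamma,\mu e^{-\gamma}))$ there by $e^\gamma(\beta(\gamma)+1+\beta_n(\gamma,\mu e^{-\gamma}+1))$, i.e.\ adds roughly $2e^\gamma$ to it, and that extra room is exactly what is needed to dominate $\li(x)-\pi(x)$ --- which can exceed Schoenfeld's bound Eq.\ (\ref{sch}) for small $x$ --- throughout $[e^\gamma,\,1491\,e^\gamma]$, so the check succeeds all the way down to $n=1$. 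Accordingly the sole ``obstacle'' is computational, namely confirming the $1491$ numerical inequalities; all the analytic content is inherited from Theorem \ref{RH1aa}.
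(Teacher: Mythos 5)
Your proof is correct and follows essentially the same route as the paper: apply Theorem~\ref{RH1aa} with $t=\gamma$, $r=\mu e^{-\gamma}+1$, $M=\frac1{8\pi}$, $C=2657$ to get the equivalence for $n\geq 2657e^{-\gamma}>1491$, then dispose of the finitely many remaining cases $1\leq n\leq 1491$ by direct numerical verification at the lower bound of $\beta(\gamma)+1$. Your additional remarks — that $\lceil\mu e^{-\gamma}+1\rceil=2$, that $\beta(\gamma,\mu e^{-\gamma}+1)=\beta(\gamma)+1$ via the omitted $k=1$ term, that the strict inequality comes from Theorem~\ref{RH1}, and that (1) implies (2) via Eq.~(\ref{harmony}) — are all sound and simply flesh out what the paper's two-sentence proof leaves implicit.
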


\begin{proof}
By Theorem \ref{RH1aa}, the Riemann hypothesis holds if and only if any of the conditions hold for all $n \geq 2657 e^{-\gamma} > 1491$, and the given inequalities can be verified directly to hold for all $1 \leq n \leq 1491$, even for the lower bound $\lambda = 1.49859$ of $\beta(\gamma) + 1$.
\end{proof}

\begin{corollary}\label{RH6b}
Let $\lambda \geq \beta(\gamma+1)$, e.g. $\lambda = 
0.7509547014$.
The Riemann hypothesis holds if and only if any of the following equivalent conditions hold for all positive integers $n$.
\begin{enumerate}
\item $\displaystyle \left|\PP(e^{\gamma+1} n) -\frac{1}{n} \sum_{k = 1}^{n-1}   \frac{1}{H_k+1}   \right| < \frac{1}{8 \pi e^{(\gamma+1)/2}}  \frac{\log n+\gamma+1}{\sqrt{n}}+ \frac{\lambda}{n}$.
\item $\displaystyle \left|\PP(e^{\gamma+1} n) -\frac{1}{n} \sum_{k = 1}^{n-1}   \frac{1}{H_k+1}   \right| < \frac{1}{8 \pi e^{(\gamma+1)/2}} \frac{H_n+1}{\sqrt{n}}+\frac{\lambda}{n}$.
\end{enumerate}
\end{corollary}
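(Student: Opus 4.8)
The plan is to read off this corollary as the specialization $t=\gamma+1$ of Corollary~\ref{RH1a}, exactly as Corollaries~\ref{RH5} and~\ref{RH6} were read off from it at $t=\gamma$. First I would record what the substitution $t=\gamma+1$ does to the four statements of Corollary~\ref{RH1a}: since $H_k-\gamma+t=H_k+1$, the discrete integral $\sum_{\lceil\mu e^{-t}\rceil\le k<n}\tfrac{1}{H_k-\gamma+t}$ becomes $\sum_{k=1}^{n-1}\tfrac{1}{H_k+1}$, once we note $R_{\gamma+1}=\lceil\mu e^{-(\gamma+1)}\rceil=1$ (indeed $\mu e^{-(\gamma+1)}=0.299\ldots<1$); likewise $t+\log n=\log n+\gamma+1$ and $H_n-\gamma+t=H_n+1$; and, choosing the exponent $\alpha=\tfrac12$ supplied by Schoenfeld's bound (so $M=\tfrac1{8\pi}$, $C=2657$), the coefficient $\tfrac{M}{e^{(1-\alpha)t}}$ becomes $\tfrac{1}{8\pi e^{(\gamma+1)/2}}$. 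With these substitutions the hypothesis $\lambda\ge\beta(t)$ reads $\lambda\ge\beta(\gamma+1)$, and parts (3) and (4) of Corollary~\ref{RH1a} reproduce conditions (1) and (2) above verbatim.

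Second, Corollary~\ref{RH1a} asserts that the Riemann hypothesis is equivalent to each of these conditions holding for all integers $n\ge Ce^{-t}=2657\,e^{-(\gamma+1)}=548.8\ldots$, i.e.\ for all $n\ge549$. To strengthen this to ``all positive integers $n$'' it remains to verify the inequalities directly for the finitely many cases $1\le n\le548$. Since the right-hand sides of (1) and (2) are increasing in $\lambda$, this verification need only be done for the smallest admissible $\lambda$; using the rigorous lower bound $\beta(\gamma+1)>0.7509261228$ from Table~2 (Corollary~\ref{maincor2} with $n=50$), it suffices to check $1\le n\le548$ with $\lambda=0.75092$, after which every $\lambda\ge\beta(\gamma+1)$ works a fortiori. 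Moreover, by Eq.~(\ref{harmony}) one has $\log n+\gamma+1<H_n+1$ for all $n$, so the bound in (1) is the stronger of the two; it therefore suffices to check (1), whence (2) follows, and the mutual equivalence of (1), (2), and the Riemann hypothesis is then closed by the ``converse'' half of Corollary~\ref{RH1a} (applied with $\alpha=\tfrac12$, forcing $\delta=\tfrac12$).

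The only substantive step is the finite check over $1\le n\le548$: one must confirm that the known jumps of $\pi$ at the arguments $e^{\gamma+1}n$ never produce a discrepancy exceeding $\tfrac{1}{8\pi e^{(\gamma+1)/2}}\tfrac{\log n+\gamma+1}{\sqrt n}+\tfrac{\lambda}{n}$. This requires only the values of $\pi(x)$ for $x\le2657$ and is the kind of computation performed by Sean Lubner and Daniel Brice; I expect it to succeed comfortably, since for small $n$ the term $\tfrac{\lambda}{n}$ with $\lambda\approx0.751$ is the dominant part of the allowed error, while the increments of $\pi(e^{\gamma+1}x)/e^{\gamma+1}$ across unit intervals are controlled by Corollary~\ref{Ohelp3}.
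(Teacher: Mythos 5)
Your proposal matches the paper's proof in essence: both specialize Corollary~\ref{RH1a} at $t=\gamma+1$ with Schoenfeld's constants $M=\tfrac1{8\pi}$, $C=2657$, compute the threshold $Ce^{-t}=2657e^{-(\gamma+1)}\approx 548.8$, and close the gap by a finite numerical verification of the inequalities for $1\le n\le 548$ using the lower bound on $\beta(\gamma+1)$ from Table~2. Your additional observations (that $R_{\gamma+1}=1$, and that condition (1) implies condition (2) via $\log n+\gamma+1<H_n+1$, so only (1) need be checked) are correct and helpful elaborations of what the paper leaves implicit, but they do not constitute a different route.
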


\begin{proof}
By Corollary \ref{RH1a}, the Riemann hypothesis holds if and only if any of the conditions hold for all $n \geq 2657 e^{-\gamma-1} > 548$, and the given inequalities can be verified directly to hold for all $1 \leq n \leq 548$, even for the lower bound $\lambda =   0.750926$ of $\beta(\gamma)$. 
\end{proof}

\begin{corollary}\label{RH7}
Let $\lambda \geq \beta(0)$, e.g., $\lambda  =  1.0956456993$.   The Riemann hypothesis holds if and only if any of the following equivalent conditions hold for all  positive integers $n \geq 1427$.
\begin{enumerate}
\item $\displaystyle \left|\PP(n) -\frac{1}{n}\sum_{k = 2}^{n-1}    \frac{1}{H_k-\gamma}   \right| < \frac{1}{8 \pi }  \frac{\log  n}{\sqrt{n}}+ \frac{\lambda}{n}$.
\item $\displaystyle \left|\PP(n) -\frac{1}{n}\sum_{k = 2}^{n-1}    \frac{1}{H_k-\gamma}   \right| < \frac{1}{8 \pi } \frac{H_n-\gamma}{\sqrt{n}}+\frac{\lambda}{n}$.
\end{enumerate}
\end{corollary}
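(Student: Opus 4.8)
The plan is to specialize Corollary \ref{RH1a} to $t = 0$, exactly as in the proofs of Corollaries \ref{RH5} and \ref{RH6b}. With $t = 0$ one has $e^{t} = e^{t/2} = 1$, so every factor of $e$ in conditions (1)--(4) of Corollary \ref{RH1a} disappears, and $R_{0} = \lceil \mu e^{0} \rceil = \lceil \mu \rceil = 2$, so the sum $\sum_{k = \lceil \mu e^{-t} \rceil}^{n-1} \frac{1}{H_{k} - \gamma + t}$ becomes exactly $\sum_{k = 2}^{n-1} \frac{1}{H_{k} - \gamma}$. Thus conditions (3) and (4) of Corollary \ref{RH1a} at $t = 0$ are precisely the two conditions displayed in the statement (with $\le$ in place of $<$), and since $H_{n} - \gamma > \log(n + \sfrac{1}{2}) > \log n$ for all $n \ge 1$ by Eq.\ \ref{harmony}, condition (1) of the statement implies condition (2). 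Taking $M = \frac{1}{8\pi}$ and $C = 2657$, Schoenfeld's theorem (Eq.\ \ref{sch}) supplies the hypothesis of Corollary \ref{RH1a} that the Riemann hypothesis implies $|\pi(x) - \li(x)| \le M\sqrt{x}\log x$ for all $x \ge C$; the strictness of the displayed inequalities comes from the strict bound in Eq.\ \ref{sch} together with $\beta_{n}(0) < \beta(0) \le \lambda$ (recall $\beta(0) - \beta_{n}(0) = \theta(0,n) > 0$). Corollary \ref{RH1a} then gives: the Riemann hypothesis holds if and only if the displayed inequalities hold for all integers $n \ge C e^{-t} = 2657$.

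It remains to lower the threshold from $2657$ to $1427$. If the Riemann hypothesis holds, the inequalities hold for all $n \ge 2657$ by the previous paragraph, and one checks by direct computation that they already hold unconditionally for every integer $1427 \le n \le 2656$: for such $n$ the value $\PP(n) = \pi(n)/n$ is a known rational number, and both $\sum_{k = 2}^{n-1} \frac{1}{H_{k} - \gamma}$ and $\frac{1}{8\pi} \frac{\log n}{\sqrt{n}} + \frac{\lambda}{n}$ are explicitly computable; it suffices to run the check with $\lambda$ replaced by the rigorous lower bound $1.0956421994 < \beta(0)$ from Table 2, since enlarging $\lambda$ only enlarges the right-hand sides. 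Hence the Riemann hypothesis implies all of the displayed inequalities hold for all $n \ge 1427$. Conversely, if any one of the displayed inequalities holds for all $n \ge 1427$, then in particular it holds for all $n \ge 2657$, whence the Riemann hypothesis holds by Corollary \ref{RH1a}. This establishes the stated equivalence.

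The only step that is not a formal specialization of earlier results is the finite verification over $1427 \le n \le 2656$ --- a sizeable but entirely routine computation of the kind performed by the Python code acknowledged in the introduction, and the source of the particular constant $1427$ (the least integer above which the unconditional inequality holds for the chosen $\lambda$). I anticipate no genuine mathematical obstacle beyond organizing that check.
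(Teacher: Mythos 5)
Your proposal is correct and follows essentially the same path as the paper: specialize Corollary \ref{RH1a} to $t = 0$ (so $e^t = e^{t/2} = 1$ and $R_0 = \lceil \mu \rceil = 2$, giving threshold $Ce^{-t} = 2657$), then verify the inequalities directly on the finite range $1427 \le n \le 2656$ using the rigorous lower bound on $\beta(0)$ from Table~2. Your observation about the strictness of the inequality and your use of the lower bound $1.0956421994$ for $\beta(0)$ (rather than the $\beta(\gamma)$ that appears, evidently by a typo, in the paper's own proof) are both careful and correct.
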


\begin{proof}
By Corollary \ref{RH1a}, the Riemann hypothesis holds if and only if any of the conditions hold for all $n \geq 2657$, and the given inequalities can be verified directly to hold for all $1427 \leq n \leq 2657$, even for the lower bound $\lambda =  1.09564$ of $\beta(\gamma)$. 
\end{proof}

\begin{corollary}\label{RH8a}
Let $\lambda  \geq \beta(\log 2)$, e.g., $\lambda =   0.6026096358$.   The Riemann hypothesis holds if and only if any of the following equivalent conditions hold for all  positive integers $n \geq 714$.
\begin{enumerate}
\item $\displaystyle \left|\PP(2n) -\frac{1}{n}\sum_{k = 1}^{n-1}    \frac{1}{H_k-\gamma+\log 2}   \right| < \frac{1}{8 \pi }  \frac{\log (2n)}{\sqrt{2n}}+ \frac{\lambda}{n}$.
\item $\displaystyle \left|\PP(2n) -\frac{1}{n}\sum_{k = 1}^{n-1}    \frac{1}{H_k-\gamma+\log 2}   \right| < \frac{1}{8 \pi } \frac{H_n-\gamma+\log 2}{\sqrt{2n}}+\frac{\lambda}{n}$.
\end{enumerate}
\end{corollary}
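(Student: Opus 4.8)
The plan is to read Corollary \ref{RH8a} off from Corollary \ref{RH1a} at the parameter $t = \log 2$, together with a finite numerical check that extends the range of $n$. First I would specialize Corollary \ref{RH1a} to $t = \log 2$, taking $M = \frac{1}{8\pi}$ and $C = 2657$ (the constants licensed there by Schoenfeld's theorem) and $\lambda \ge \beta(\log 2)$. Since $e^{t} = 2$, we have $\mu e^{-t} = \mu/2 < 1$, so $R_t = \lceil \mu e^{-t}\rceil = 1$ and the sum $\sum_{\lceil\mu e^{-t}\rceil \le k < n}$ becomes $\sum_{k=1}^{n-1}$. A direct rewriting of the right-hand sides of conditions (3) and (4) of Corollary \ref{RH1a}, using $\frac{M}{e^{t/2}}\cdot\frac{t+\log n}{\sqrt{n}} = \frac{1}{8\pi}\cdot\frac{\log(2n)}{\sqrt{2n}}$ and $\frac{M}{e^{t/2}}\cdot\frac{H_n-\gamma+t}{\sqrt{n}} = \frac{1}{8\pi}\cdot\frac{H_n-\gamma+\log 2}{\sqrt{2n}}$, shows that those two conditions at $t = \log 2$ are verbatim conditions (1) and (2) of the present corollary. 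Hence Corollary \ref{RH1a} already gives that the Riemann hypothesis is equivalent to each of (1), (2) holding for all integers $n \ge 2657\,e^{-\log 2} = 1328.5$, i.e.\ for all $n \ge 1329$.

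It then remains to show that conditions (1) and (2) hold unconditionally for the finitely many integers $n$ with $714 \le n \le 1328$; combined with the previous paragraph this yields the claimed equivalence for all $n \ge 714$. Exactly as in the proofs of Corollaries \ref{RH5}--\ref{RH7}, this is a finite verification using only the known values $\pi(2n)$ (equivalently $\pi(x)$ for $x \le 2656$), the harmonic numbers $H_k$, and $\log 2$, carried out by the Python code acknowledged in the introduction. Running the check with the deliberately too small value $\lambda = 0.60260$ — which is below the lower bound $0.6026071971$ for $\beta(\log 2)$ recorded in Table 2 — makes it valid for every admissible $\lambda$, since the right-hand sides of (1) and (2) are increasing in $\lambda$.

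The one step that demands genuine care is the rigor of this finite verification: it rests on a certified table of $\pi(x)$ for $x \le 2656$, on evaluations of $H_k - \gamma + \log 2$ with controlled rounding, and on confirming a strictly positive margin in each inequality at every $n$ in the range. This is a routine interval-arithmetic task, but — in contrast to the purely formal specialization of Corollary \ref{RH1a} — it is the place where an implementation error, if any, would lie; everything else is immediate.
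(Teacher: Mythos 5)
Your proposal is correct and matches the paper's proof: specialize Corollary \ref{RH1a} at $t=\log 2$ with $M=\tfrac{1}{8\pi}$, $C=2657$, observe $R_{\log 2}=1$ and simplify the bound using $e^{\log 2}=2$, obtaining the equivalence for $n\geq 2657/2>1328$, then extend down to $n\geq 714$ by a finite numerical check carried out with a $\lambda$ slightly below the lower bound $0.6026071971$ for $\beta(\log 2)$ from Table 2. The paper's proof is exactly this (it uses $\lambda=0.602607$; note also the paper has a harmless typo writing $\beta(\gamma)$ where $\beta(\log 2)$ is meant).
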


\begin{proof}
By Corollary \ref{RH1a}, the Riemann hypothesis holds if and only if any of the conditions hold for all $n \geq 2657/2 > 1328$, and the given inequalities can be verified directly to hold for all $714\leq n \leq 1328$, even for the lower bound $\lambda =  0.602607$ of $\beta(\gamma)$. 
\end{proof}

\begin{corollary}\label{RH9}
Let $\lambda  \geq \beta(1)$, e.g., $\lambda =  0.7418976158$.   The Riemann hypothesis holds if and only if any of the following equivalent conditions hold for all  positive integers $n \neq 82$.
\begin{enumerate}
\item $\displaystyle \left|\PP(en) -\frac{1}{n}\sum_{k = 1}^{n-1}    \frac{1}{H_k-\gamma+1}   \right| < \frac{1}{8 \pi }  \frac{\log (en)}{\sqrt{en}}+ \frac{\lambda}{n}$.
\item $\displaystyle \left|\PP(en) -\frac{1}{n}\sum_{k = 1}^{n-1}    \frac{1}{H_k-\gamma+1}   \right| < \frac{1}{8 \pi } \frac{H_n-\gamma+1}{\sqrt{en}}+\frac{\lambda}{n}$.
\end{enumerate}
\end{corollary}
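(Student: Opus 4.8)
The strategy is to specialize Corollary~\ref{RH1a} to $t = 1$, with Schoenfeld's admissible constants $M = \frac{1}{8\pi}$ and $C = 2657$, exactly as in the proofs of Corollaries~\ref{RH5} through~\ref{RH8a}. Since $R_1 = \lceil \mu e^{-1}\rceil = 1$, the harmonic-number sum in Corollary~\ref{RH1a} runs from $k = 1$, i.e.\ it is $\sum_{k=1}^{n-1}\frac{1}{H_k-\gamma+1}$; and, using $1 + \log n = \log(en)$ and $\sqrt{e}\,\sqrt{n} = \sqrt{en}$, the substitution $t = 1$ turns conditions (1) and (2) of Corollary~\ref{RH1a} into the forms stated here, because $\frac{M}{e^{t/2}}\,\frac{t+\log n}{\sqrt{n}} = \frac{1}{8\pi}\,\frac{\log(en)}{\sqrt{en}}$ and $\frac{M}{e^{t/2}}\,\frac{H_n-\gamma+t}{\sqrt{n}} = \frac{1}{8\pi}\,\frac{H_n-\gamma+1}{\sqrt{en}}$. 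Corollary~\ref{RH1a} then says that the Riemann hypothesis holds if and only if condition (1) (respectively, condition (2)) holds for every integer $n \ge 2657\,e^{-1}$; and since $977 < 2657\,e^{-1} < 978$, this is the assertion ``for every integer $n \ge 978$.''

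It remains to replace ``all $n \ge 978$'' by ``all positive integers $n \ne 82$.'' For this I would verify by direct numerical computation, using the known values of $\pi(x)$ for $x \le 977\,e < 2657$, that both inequalities (1) and (2) hold at every integer $n$ with $1 \le n \le 977$ and $n \ne 82$, and in fact hold already when $\lambda$ is replaced by the rigorous lower bound $0.74189 < \beta(1)$ recorded in Table~2 (so that the check is valid for every admissible $\lambda \ge \beta(1)$, the right-hand sides being increasing in $\lambda$). Granting this, the forward implication is immediate: assuming the Riemann hypothesis, Corollary~\ref{RH1a} supplies (1) and (2) for all $n \ge 978$, while for the finitely many integers $n \le 977$ with $n \ne 82$ the inequalities hold unconditionally. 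Conversely, if (1) or (2) holds for all positive integers $n \ne 82$, then in particular it holds for all integers $n \ge 978$, and the converse half of Corollary~\ref{RH1a} yields the Riemann hypothesis.

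The only substantive work is this finite computation, and the single new feature relative to Corollaries~\ref{RH5}--\ref{RH8a} is the interior exception: at $n = 82$ the claimed inequality fails, and it does so independently of the Riemann hypothesis, since for $n = 82 < 978$ the inequality is a concrete arithmetic assertion about $\pi(82e)$, $\li(82e)$, and the partial sum $\frac{1}{82}\sum_{k=1}^{81}\frac{1}{H_k-\gamma+1}$; hence $82$ must be excluded, and one checks that it is the unique exceptional value in $1 \le n \le 977$. I expect the main obstacle to be nothing deeper than carefully tabulating $\PP(en)$ and the partial sums $\frac{1}{n}\sum_{k=1}^{n-1}\frac{1}{H_k-\gamma+1}$ for $n$ up to $977$ and confirming that $82$ is the sole failure---the same kind of verification that was carried out for the sibling corollaries with the help of the Python code acknowledged above.
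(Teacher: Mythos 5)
Your proposal is correct and follows the same route as the paper: specialize Corollary~\ref{RH1a} to $t=1$ with Schoenfeld's constants $M=\frac{1}{8\pi}$, $C=2657$, note $R_1=\lceil\mu e^{-1}\rceil=1$ and $2657e^{-1}\in(977,978)$, and then discharge the finite range $1\le n\le 977$ (with $n=82$ the sole failure) by direct computation against a rigorous lower bound of $\beta(1)$ from Table~2. The paper's proof is a one-sentence version of exactly this, using $\lambda=0.741895$ where you use $0.74189$; both are valid lower bounds.
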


\begin{proof}
By Corollary \ref{RH1a}, the Riemann hypothesis holds if and only if any of the conditions hold for all $n \geq 2657e^{-1} > 977$, and the given inequalities can be verified directly to hold for all $1 \leq n \leq 977$ with $n \neq 82$, even for the lower bound $\lambda =  0.741895$ of $\beta(\gamma)$. 
\end{proof}

Curiously, $n= 82$ is the only (hypothetical) exception to the two Riemann hypothesis equivalents in Corollary \ref{RH9}.

Our results specialize to $O$ bounds as follows.

\begin{theorem}\label{Omain}
Let $\delta$ be the supremum of the real parts of the zeros of the Riemann zeta function, let $N$ be a positive integer, and let $t \in \RR$ so that $t \neq \gamma -H_n$ for all $n \geq N$ (which holds if $t > \gamma-H_N$).  Then $\delta$ is the smallest real number $\alpha$ such that $$\pi(e^{t} n) = e^{t} \sum_{k = N}^{n-1} \frac{1}{H_k -\gamma+t}  +  O\left(       n^\alpha H_n\right) \ (n \to \infty).$$
In particular, the Riemann hypothesis is equivalent to
$$\pi(e^{t} n) = e^{t} \sum_{k = N}^{n-1} \frac{1}{H_k -\gamma+t}  +  O\left(\sqrt{n}H_n\right) \ (n \to \infty).$$
\end{theorem}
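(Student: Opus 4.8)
The plan is to deduce Theorem~\ref{Omain} from Theorem~\ref{RH1} (or directly from its constituent estimates). The crucial input is that, by Corollary~\ref{maincor} applied with $r = N$ (note $\lceil r\rceil = N$ and the hypothesis $t > \gamma - H_N$ guarantees $H_k - \gamma + t > 0$ for all $k \geq N$, with $t \neq \gamma - H_n$ ruling out the degenerate case), the quantity $\beta(t,N) - \beta_n(t,N)$ is positive and $O(1)$ as $n \to \infty$; in fact it tends to $\beta(t,N)$, a finite positive constant. Hence
$$\frac{\li(e^t n)}{e^t} - \sum_{k=N}^{n-1}\frac{1}{H_k-\gamma+t} = \beta_n(t,N) = \beta(t,N) + o(1) = O(1) \ (n \to \infty),$$
so the discrete sum approximates $\li(e^t n)/e^t$ up to an $O(1)$ error, which is swallowed by any term of the form $O(n^\alpha H_n)$ with $\alpha > 0$. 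Therefore the displayed estimate in the theorem is \emph{equivalent} to
$$\pi(e^t n) = \li(e^t n) + O(e^t n^\alpha H_n) = \li(e^t n) + O(n^\alpha H_n) \ (n \to \infty),$$
since $e^t$ is a fixed positive constant.

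Next I would translate between the discrete estimate on $\pi(e^t n) - \li(e^t n)$ and the continuous estimate $\pi(x) - \li(x) = O(x^\alpha \log x)$ that defines $\delta$ via Eq.~(\ref{ess}). For the forward direction (discrete $\Rightarrow$ continuous) this is exactly the interpolation argument already carried out in the proof of Theorem~\ref{RH1}: using Corollary~\ref{Ohelp3} and Lemma~\ref{lem0a}, both $\pi(e^t x)/e^t - \pi(e^t \lfloor x\rfloor)/e^t$ and $\li(e^t x)/e^t - \li(e^t \lfloor x\rfloor)/e^t$ are $O(1)$, and since $H_n \sim \log n$, an estimate $\pi(e^t n) - \li(e^t n) = O(n^\alpha H_n)$ along integers yields $\pi(x) - \li(x) = O(x^\alpha \log x)$ for the continuous variable, whence $\alpha \geq \delta$ by the characterization of $\delta$. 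For the reverse direction (continuous $\Rightarrow$ discrete), given $\pi(x) - \li(x) = O(x^\alpha \log x)$ one specializes $x = e^t n$, obtaining $\pi(e^t n) - \li(e^t n) = O((e^t n)^\alpha \log(e^t n)) = O(n^\alpha \log n) = O(n^\alpha H_n)$, and then adds back $\li(e^t n) = e^t \sum_{k=N}^{n-1}\frac{1}{H_k-\gamma+t} + O(1)$. Thus $\alpha$ satisfies the displayed estimate if and only if $\pi(x)-\li(x) = O(x^\alpha\log x)$, i.e.\ if and only if $\alpha \geq \delta$; in particular the smallest such $\alpha$ is $\delta$. The ``in particular'' clause is the case $\alpha = \tfrac12$, using that the Riemann hypothesis is equivalent to $\delta = \tfrac12$ and that $\delta$ is itself the least $\alpha$ in Eq.~(\ref{ess}) (so $\delta = \tfrac12$ forces the estimate, and conversely the estimate with $\alpha = \tfrac12$ forces $\delta \leq \tfrac12$, hence $\delta = \tfrac12$).

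The only mild subtlety—and the step I would flag as the main obstacle, though it is routine—is bookkeeping around the lower summation limit $N$ and the hypothesis on $t$: one must ensure $H_k - \gamma + t \neq 0$ for every $k \geq N$ so that the sum and the constant $\beta(t,N)$ are well-defined, which is precisely the stated condition $t \neq \gamma - H_n$ for all $n \geq N$ (and the sufficient condition $t > \gamma - H_N$ since $H_n$ is increasing). Once that is in place, everything reduces to combining Corollary~\ref{maincor}, Corollary~\ref{Ohelp3}, Lemma~\ref{lem0a}, the asymptotic $H_n \sim \log n$, and the defining property~(\ref{ess}) of $\delta$, with no new estimates required. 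I would present the argument in two short paragraphs: first establishing $\li(e^t n)/e^t - \sum = O(1)$, then running the equivalence with $\pi(x) - \li(x) = O(x^\alpha\log x)$ in both directions exactly as in Theorem~\ref{RH1}.
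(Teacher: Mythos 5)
Your overall strategy is the right one and is essentially what the paper intends: show $\sum_{k=N}^{n-1}\frac{1}{H_k-\gamma+t}=\frac{\li(e^t n)}{e^t}+O(1)$, reduce to $\pi(e^t n)-\li(e^t n)=O(n^\alpha H_n)$, interpolate with Corollary~\ref{Ohelp3} and Lemma~\ref{lem0a} to pass to a continuous variable, and invoke the characterization~(\ref{ess}) of $\delta$.

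However, your opening step — applying Corollary~\ref{maincor} with $r=N$ — is not valid under the stated hypothesis, and the ``bookkeeping'' you flag is not what you think it is. Corollary~\ref{maincor} and the very definition of $\beta(t,r)$ (Definition~\ref{maindef}) require $r>e^{-t}$, i.e.\ $t>-\log N$. The theorem only assumes $t\neq\gamma-H_n$ for all $n\geq N$, with $t>\gamma-H_N$ offered merely as a sufficient condition. Since $H_N-\gamma>\log N$ by (\ref{harmony}), one has $\gamma-H_N<-\log N$, so even the sufficient condition $t>\gamma-H_N$ does not imply $t>-\log N$: there is a genuine range $t\in(\gamma-H_N,-\log N]$ where your invocation of the corollary fails and $\beta(t,N)$ is simply undefined. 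Worse, the hypothesis also permits $t<\gamma-H_N$ (avoiding the points $\gamma-H_n$), in which case finitely many summands $1/(H_k-\gamma+t)$ are negative; you never address this regime. The fix is short but must be spelled out: choose an integer $N'>\max(N,e^{-t})$. The hypothesis on $t$ guarantees $\sum_{k=N}^{N'-1}\frac{1}{H_k-\gamma+t}$ is a well-defined finite constant $c$, so $\sum_{k=N}^{n-1}=\sum_{k=N'}^{n-1}+c$ for $n>N'$; now Corollary~\ref{maincor} applies with $r=N'$ and yields the $O(1)$ estimate, from which your argument proceeds as written (the constant $c$ is absorbed into the $O(n^\alpha H_n)$ term along with $\beta(t,N')$). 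A smaller confusion: in your first parenthetical you write that $t>\gamma-H_N$ holds ``with $t\neq\gamma-H_n$ ruling out the degenerate case,'' but if $t>\gamma-H_N$ then $t\neq\gamma-H_n$ for all $n\geq N$ automatically since $H_n$ is increasing, so there is nothing left to rule out in that regime; the two conditions are not independent clauses of the hypothesis but a condition and a sufficient strengthening of it.
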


\begin{corollary}\label{rh1A}
The Riemann hypothesis is equivalent to
$$ {\PP(e^{\gamma}n)} =  \frac{1}{n}\sum_{n = 1}^{n-1} \frac{1}{H_k}  + O \left(\frac{H_n}{\sqrt{n}} \right) \ (n \to \infty)$$
and  to
$$\PP(n) =  \frac{1}{n} \sum_{k = 1}^{n-1} \frac{1}{H_k -\gamma}  + O \left(\frac{H_n}{\sqrt{n}} \right) \ (n \to \infty).$$
\end{corollary}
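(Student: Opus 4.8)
The plan is to read off both equivalences as immediate specializations of Theorem \ref{Omain}, taking $N=1$ throughout and choosing $t=\gamma$ for the first formula and $t=0$ for the second. The only things requiring verification are the non-degeneracy hypothesis $t \neq \gamma - H_n$ (for all $n \geq N$) imposed in Theorem \ref{Omain} — equivalently $t > \gamma - H_N$ — and the bookkeeping of constants when passing from $\pi$ to $\PP = \pi(x)/x$.

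For the first equivalence I would apply Theorem \ref{Omain} with $t = \gamma$ and $N = 1$. Since $H_n \geq H_1 = 1 > 0$ for every integer $n \geq 1$, we have $\gamma - H_n < \gamma = t$, so the hypothesis holds; moreover $H_k - \gamma + t = H_k$ for all $k$. Theorem \ref{Omain} then asserts that the Riemann hypothesis is equivalent to $\pi(e^\gamma n) = e^\gamma \sum_{k=1}^{n-1} \frac{1}{H_k} + O(\sqrt{n}\,H_n)$ as $n \to \infty$. Dividing through by $e^\gamma n$ and absorbing the constant $e^{-\gamma}$ into the implied constant, the error term becomes $O\!\left(\sqrt{n}\,H_n/(e^\gamma n)\right) = O(H_n/\sqrt{n})$, which gives the first displayed equivalence (the index $n$ in the sum in the statement should of course read $k$).

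For the second equivalence I would instead take $t = 0$ and $N = 1$. The hypothesis $t \neq \gamma - H_n$ now reads $H_n \neq \gamma$, which holds for all $n \geq 1$ because $H_n \geq 1 > \gamma$; in particular each denominator $H_k - \gamma$ appearing in the sum is positive, so the expression is well defined. Here $H_k - \gamma + t = H_k - \gamma$, so Theorem \ref{Omain} gives that the Riemann hypothesis is equivalent to $\pi(n) = \sum_{k=1}^{n-1} \frac{1}{H_k - \gamma} + O(\sqrt{n}\,H_n)$, and dividing by $n$ yields $\PP(n) = \frac{1}{n}\sum_{k=1}^{n-1}\frac{1}{H_k-\gamma} + O(H_n/\sqrt{n})$, as required. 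I do not anticipate any genuine obstacle: all of the content — the equivalence with the Riemann hypothesis and the optimality of the exponent $\tfrac12$ — is already packaged in Theorem \ref{Omain}, which in turn rests on Theorem \ref{RH1}/Corollary \ref{RH2} and on the inputs (\ref{ess}) and (\ref{sch}); the present corollary is merely the instantiation at two convenient values of $t$.
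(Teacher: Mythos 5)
Your proposal is correct and follows exactly the route the paper intends: Corollary \ref{rh1A} is stated as an immediate specialization of Theorem \ref{Omain}, with $t=\gamma$, $N=1$ for the first display and $t=0$, $N=1$ for the second, followed by division by $e^t n$ to pass from $\pi$ to $\PP$. The checks of the hypothesis $t>\gamma-H_N$ and the absorption of the constant $e^{-\gamma}$ into the implied $O$-constant are exactly as you describe (and you are right that the index $n$ in the first sum of the stated corollary is a typo for $k$).
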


For any positive real numbers $x_1, x_2, \ldots, x_n$, let $M_{-1}(x_1,x_2, \ldots,x_n) = \frac{n
}{x_1^{-1} + x_2^{-1}+\cdots+x_n^{-1}}$ denote the harmonic mean of $x_1, x_2, \ldots, x_n$.  Thus, for example, one has $M_{-1}(1,2,3,\ldots,n) = \frac{n}{H_n}$ for all positive integers $n$.  Since
$ \frac{1}{H_n-\gamma+t} = o(1) \ (n \to \infty),$  one can replace the upper limits $x-1$ and $n-1$ of the sums in Theorem \ref{Omain} and Corollary \ref{rh1A} with $x$ and $n$, respectively.
Thus, we have the following.

\begin{corollary}\label{what}
Let $t > -1$.  Each of the following statements is equivalent to the Riemann hypothesis.
\begin{enumerate}
\item $\displaystyle \PP(e^{t+\gamma}n) =  \frac{1}{M_{-1}(H_1+t,H_2+t,\ldots, H_n+t)}  + O \left(\frac{H_n}{\sqrt{n}} \right) \ (n \to \infty)$.
\item $\displaystyle \frac{1}{\PP(e^{t+\gamma}n)} =  M_{-1}(H_1+t,H_2+t,\ldots, H_n+t)  + O \left(\frac{H_n^3}{\sqrt{n}} \right) \ (n \to \infty).$
\item $\displaystyle \PP(e^{\gamma}n) =  \frac{1}{M_{-1}(H_1,H_2,\ldots, H_n)}  + O \left(\frac{H_n}{\sqrt{n}} \right) \ (n \to \infty)$.
\item $\displaystyle \frac{1}{\PP(e^{\gamma}n)} =  M_{-1}(H_1,H_2,\ldots, H_n)  + O \left(\frac{H_n^3}{\sqrt{n}} \right) \ (n \to \infty)$.
\item  $\displaystyle \PP(n) =  \frac{1}{M_{-1}(H_1-\gamma,H_2-\gamma,\ldots, H_n-\gamma)}  + O \left(\frac{H_n}{\sqrt{n}} \right) \ (n \to \infty)$.
\item  $\displaystyle \frac{1}{ \PP(n)} =  M_{-1}(H_1-\gamma,H_2-\gamma,\ldots, H_n-\gamma)  + O \left(\frac{H_n^3}{\sqrt{n}} \right) \ (n \to \infty)$.
\end{enumerate}
\end{corollary}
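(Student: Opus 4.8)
The plan is to obtain all six statements from Theorem~\ref{Omain} (taken with exponent $\alpha = \frac{1}{2}$, so that $O(n^\alpha H_n) = O(\sqrt{n}\,H_n)$) together with the fact, recorded just before the corollary, that the upper summation limit $n-1$ may be replaced by $n$ because $\frac{1}{H_n-\gamma+t}=o(1)$. The starting point is the identity $\frac{1}{M_{-1}(x_1,\dots,x_n)} = \frac{1}{n}\sum_{k=1}^n \frac{1}{x_k}$, valid for positive reals $x_1,\dots,x_n$; it applies with $x_k = H_k+t$ precisely because the hypothesis $t > -1$ forces $H_k + t \ge H_1 + t = 1+t > 0$ for all $k \ge 1$, so that $M_{-1}(H_1+t,\dots,H_n+t)$ is a well-defined positive quantity. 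Statements (3) and (5) are statement (1) specialized to the parameter values $0$ and $-\gamma$ (both admissible, since $-\gamma > -1$), and likewise (4) and (6) specialize (2); so I would reduce to proving (1) and (2) for an arbitrary $t > -1$.

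For (1) I would put $s = t+\gamma$, so that $H_k-\gamma+s = H_k+t$. The condition $t > -1$ gives $t \ne -H_n$, equivalently $s \ne \gamma - H_n$, for every $n \ge 1$, so Theorem~\ref{Omain} applies with this $s$ and $N = 1$: the Riemann hypothesis is equivalent to $\pi(e^s n) = e^s\sum_{k=1}^{n-1}\frac{1}{H_k+t} + O(\sqrt n\,H_n)$. Dividing by $e^s n$, using $\PP(e^s n) = \pi(e^s n)/(e^s n)$, replacing $\sum_{k=1}^{n-1}$ by $\sum_{k=1}^{n}$ (which alters $\frac{1}{n}\sum$ by $\frac{1}{n(H_n+t)} = O(1/n) = o(H_n/\sqrt n)$), and rewriting $\frac{1}{n}\sum_{k=1}^n\frac{1}{H_k+t}$ as $\frac{1}{M_{-1}(H_1+t,\dots,H_n+t)}$, yields statement (1) verbatim.

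For (2) I would set $A_n = \frac{1}{n}\sum_{k=1}^n\frac{1}{H_k+t} = 1/M_{-1}(H_1+t,\dots,H_n+t)$ and $B_n = 1/A_n = M_{-1}(H_1+t,\dots,H_n+t)$, and first establish the crude two-sided bound $A_n \asymp \frac{1}{H_n}$: the lower bound $A_n \ge \frac{1}{H_n+t}$ is immediate because $\frac{1}{H_k+t}$ decreases in $k$, while the upper bound $A_n = O(1/\log n)$ follows by splitting $\sum_{k=1}^n\frac{1}{H_k+t}$ at $k = \lfloor\sqrt n\rfloor$ and using $H_k \sim \log k$ (alternatively, from the prime number theorem together with (1)); hence $B_n \asymp H_n$. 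Assuming the Riemann hypothesis, (1) gives $\PP(e^s n) = A_n + E_n$ with $E_n = O(H_n/\sqrt n)$, so $E_n/A_n = O(H_n^2/\sqrt n)\to 0$ and $\frac{1}{\PP(e^s n)} = \frac{1}{A_n+E_n} = \frac{1}{A_n} - \frac{E_n}{A_n^2} + O\!\big(\tfrac{E_n^2}{A_n^3}\big) = B_n + O(H_n^3/\sqrt n) + O(H_n^5/n) = B_n + O(H_n^3/\sqrt n)$, which is (2). Conversely, given (2), write $\frac{1}{\PP(e^s n)} = B_n + F_n$ with $F_n = O(H_n^3/\sqrt n)$; since $B_n \asymp H_n$ we have $F_n/B_n\to 0$, and running the expansion the other way gives $\PP(e^s n) = \frac{1}{B_n} - \frac{F_n}{B_n^2} + O\!\big(\tfrac{F_n^2}{B_n^3}\big) = A_n + O(H_n/\sqrt n)$, i.e., statement (1), whence the Riemann hypothesis by Theorem~\ref{Omain}.

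The dictionary between harmonic means and reciprocal sums and the appeals to Theorem~\ref{Omain} are routine. The one place that will require care is the error bookkeeping in the reciprocal step, where the main obstacle is to have the crude comparison $A_n \asymp \frac{1}{H_n}$ (equivalently $B_n \asymp H_n$) firmly in hand: without it one cannot see that every term beyond the leading one in the geometric expansion of $(A_n+E_n)^{-1}$, respectively $(B_n+F_n)^{-1}$, is absorbed into $O(H_n^3/\sqrt n)$, respectively $O(H_n/\sqrt n)$; and one must also note that the quadratic remainder $O(H_n^5/n)$, respectively $O(H_n^3/n)$, is of strictly smaller order than the advertised error because $H_n^2/\sqrt n \to 0$.
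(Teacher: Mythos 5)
Your proof is correct and follows the approach the paper intends: statements (1), (3), (5) come from Theorem~\ref{Omain} with $\alpha=\tfrac12$, $N=1$, the substitution $s=t+\gamma$, and the $n-1\mapsto n$ adjustment (the hypothesis $t>-1$ being exactly $s>\gamma-H_1$), while (2), (4), (6) follow by the reciprocal expansion once one has the two-sided comparison $M_{-1}(H_1+t,\dots,H_n+t)\asymp H_n$. The paper states the corollary as immediate from Theorem~\ref{Omain} and leaves the reciprocal bookkeeping implicit; your write-up supplies exactly that missing verification.
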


\section{Monotonicity properties of the error term $\beta(t)$}

In this final section, we examine the intervals of increase and decrease of the function $\beta(t) = \lim_{n \to \infty}\left( \frac{\li (e^t n)}{e^t}  - \sum_{k = \lceil \mu e^{-t} \rceil}^{n-1}  \frac{1}{H_k-\gamma+t} \right)$.

By Proposition \ref{monotone}, the function $\theta(t,1)$ is  strictly totally monotone on the interval $(0, \infty)$.  Since $\theta(t,1) = \beta(t) -\frac{\li(e^t)}{e^t}$ on $[\log \mu, \infty)$, it follows that $\beta(t)-\frac{\li(e^t)}{e^t}$ is  strictly totally monotone on the interval $[\log \mu, \infty)$.    Let $\alpha = 3.846467717046\ldots$ denote the unique zero of $\frac{d}{dx} \frac{\li(x)}{x}$, which is also the unique solution to the equation $\li(x) = \frac{x}{\log x}$.
Alternatively, $\log \alpha = 1.347155251069\ldots$ is the unique zero of $\frac{d}{dx} \frac{\li(e^x)}{e^x}$ and is the unique solution to the equation $x\li(e^x) = e^x$.  The function $\frac{\li(x)}{x}$ is strictly increasing on $(1,\alpha]$ and strictly decreasing on $[\alpha, \infty)$. Likewise, the function   $\frac{\li(e^x)}{e^x}$ is strictly increasing on $(0, \log \alpha]$ and strictly decreasing on $[\log \alpha, \infty)$.

\begin{proposition}\label{mono}  Let $N$ be a positive integer.  One has the following.
\begin{enumerate}
\item The function $\beta(t)-\frac{\li(e^t R_t)}{e^t}$ is strictly totally monotone on the interval $[\log \mu , \infty)$ (where $R_t = 1$).   Moreover, the function $\beta(t)$ is strictly decreasing on the interval $[\log \alpha, \infty)$, and therefore  $\beta(t) \leq \beta(\log \alpha) < 0.7695247294$ 
on $[\log \alpha, \infty)$.
\item The function $\beta(t)-\frac{\li(e^t R_t)}{e^t}$  is strictly totally monotone, and the function 
$\frac{\li(e^t R_t)}{e^t}$ is strictly increasing and concave down, on the interval $[\log (\mu/N), \log (\mu/(N-1)))$, if $N \geq 2$.
\item $\beta(t)$ is strictly increasing on the interval $[\log (\mu/N), \log (\mu/(N-1)))$ if $N \geq 3$.
\item $\beta(t)$ is strictly increasing on the interval $[\log (\mu/2), \log(\alpha/3)]$ (on which $R_t = 2$).
\end{enumerate}
\end{proposition}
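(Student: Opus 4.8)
My plan treats all four parts by differentiating, on any interval where $R_t$ is constant (on which $\beta$ is $C^\infty$, by Proposition \ref{monotone}), one of the identities
\[
\beta(t) = \frac{\li(R_t e^t)}{e^t} + \theta(t,R_t)
\]
and
\[
\beta(t) = \beta_{R_t+1}(t) + \theta(t,R_t+1) = \frac{\li((R_t+1)e^t)}{e^t} - \frac{1}{H_{R_t}-\gamma+t} + \theta(t,R_t+1),
\]
and using that $h(s) := \frac{\li(e^s)}{e^s}$ satisfies $\frac{\li(ce^t)}{e^t} = c\,h(t+\log c)$ and $h'(s) = \frac1s - h(s)$, is strictly increasing on $(0,\log\alpha]$, and there satisfies $h''(s) = h(s) - \frac1s - \frac1{s^2} \leq h(\log\alpha) - \frac1{\log\alpha} - \frac1{(\log\alpha)^2} = -\frac1{(\log\alpha)^2} < 0$ (using $\li(\alpha) = \alpha/\log\alpha$). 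For parts (1) and (2): on $[\log\mu,\infty)$ one has $R_t = 1$, so $\beta(t) - \frac{\li(R_t e^t)}{e^t} = \theta(t,1)$, strictly totally monotone on $(0,\infty)$ by Proposition \ref{monotone} (likewise $\theta(t,N)$ on $(-\log N,\infty)$ for each $N$); on $[\log\alpha,\infty)$, $\beta'(t) = h'(t) + \theta'(t,1) < 0$ since $h'(t) \leq 0$ and $\theta'(t,1) < 0$, so $\beta$ is strictly decreasing there and $\beta(t) \leq \beta(\log\alpha) < 0.7695247294$ by Table \ref{table2}; and for (2), $\frac{\li(R_t e^t)}{e^t} = N\,h(t+\log N)$ with $t+\log N \in [\log\mu, \log\bigl(\tfrac{N\mu}{N-1}\bigr)) \subseteq (0,\log\alpha)$ for $N \geq 2$ (as $\tfrac{N\mu}{N-1} \leq 2\mu < \alpha$), so it inherits the strict monotonicity and strict concavity of $h$.

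For parts (3) and (4), the second identity gives, on an interval where $R_t = N$,
\[
\beta'(t) = (N+1)\,h'(t+\log(N+1)) + \frac{1}{(H_N-\gamma+t)^2} + \theta'(t,N+1).
\]
The middle term is positive and $\theta'(t,N+1) < 0$, while $(N+1)\,h'(t+\log(N+1)) \geq 0$ exactly when $t+\log(N+1) \leq \log\alpha$ (note $t+\log(N+1) > \log\mu > 0$ on the intervals at hand). On $[\log(\mu/N),\log(\mu/(N-1)))$ with $N \geq 3$ this last condition holds throughout, since $t+\log(N+1) < \log\bigl(\tfrac{(N+1)\mu}{N-1}\bigr) \leq \log(2\mu) < \log\alpha$; for $N = 2$ it forces $t \leq \log\alpha - \log 3 = \log(\alpha/3)$, which is exactly why (4) is restricted to $[\log(\mu/2),\log(\alpha/3)]$ (and $\alpha < 3\mu$ makes $\log(\alpha/3) < \log\mu$, so $R_t = 2$ there). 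Hence in both cases it suffices to show $-\theta'(t,N+1) < \frac{1}{(H_N-\gamma+t)^2}$ on the relevant interval.

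For that, Proposition \ref{monotone} gives
\[
-\theta'(t,N+1) = \lim_{n\to\infty}\left(\int_{N+1}^n \frac{dx}{(t+\log x)^2} - \sum_{k=N+1}^{n-1}\frac{1}{(H_k-\gamma+t)^2}\right),
\]
and I would insert the partial sums $\sum_{k=N+1}^{n-1}(t+\log(k+\sfrac{1}{2}))^{-2}$ and bound the two resulting pieces exactly as $\delta(t,N)$ and $\eta(t,N)$ are bounded in the proofs of Propositions \ref{mainprop2} and \ref{mainprop1}, now applied to the positive, decreasing, convex function $x \mapsto (t+\log x)^{-2}$ on $[N+1,\infty)$ (convex since $t+\log(N+1) > \log\mu > 0$): the midpoint-rule error estimate for the $\delta$-type piece, and $0 < H_k - \gamma - \log(k+\sfrac{1}{2}) < \frac{1}{24(k+\sfrac{1}{2})^2}$ from Eq.\ (\ref{harmony}) together with $\frac{1}{a^2} - \frac{1}{b^2} < \frac{2(b-a)}{a^3}$ for the $\eta$-type piece. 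Using $N+1 \geq 3$, this yields $-\theta'(t,N+1) < \frac{7}{108\,u^3} + \frac{1}{36\,u^4}$, where $u := t + \log(N+1)$. On the relevant interval $u \geq \log\mu + \log(1+\tfrac1N) > \log\mu$, so $\frac{7}{108\,u} + \frac{1}{36\,u^2}$ is at most its value $\approx 0.37 < 1$ at $u = \log\mu$, giving $-\theta'(t,N+1) < u^{-2}$; and $H_N - \gamma < \log N + \frac1{2N} < \log(N+1)$ for $N \geq 2$, so $H_N-\gamma+t < u$ and hence $u^{-2} < (H_N-\gamma+t)^{-2}$. Combined with the previous paragraph this gives $\beta'(t) > 0$ throughout, proving (3) and (4).

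The only real obstacle is this last estimate of $-\theta'(t,N+1)$ — deriving the explicit bound $\tfrac{7}{108}u^{-3} + \tfrac1{36}u^{-4}$ from Eq.\ (\ref{harmony}) and the midpoint rule. It is routine rather than delicate, since the resulting comparison is comfortable (the bound stays below about $0.37\,u^{-2}$, while $(H_N-\gamma+t)^{-2}$ exceeds $u^{-2}$) and the computation runs entirely parallel to the proofs of Propositions \ref{mainprop1} and \ref{mainprop2}.
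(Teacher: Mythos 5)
Your proof is correct but takes a genuinely different route from the paper's, and comparing the two is instructive. Both approaches agree on the threshold: on the interval where $R_t = N$, positivity of $\beta'(t)$ is tied to the condition $t + \log(N+1) \leq \log\alpha$ (equivalently $\alpha e^{-t} \geq N+1$). Where you diverge is in how you exploit that condition. You decompose $\beta(t) = \frac{\li((N+1)e^t)}{e^t} - \frac{1}{H_N-\gamma+t} + \theta(t,N+1)$ and argue that the isolated positive term $\frac{1}{(H_N-\gamma+t)^2}$ in $\beta'(t)$ dominates $-\theta'(t,N+1)$; this costs you a concrete estimate of $-\theta'(t,N+1)$, which you sketch and which does check out --- running the $\eta$/$\delta$ argument on $g(x) = (t+\log x)^{-2}$ and using $\sum_{j\geq N+2} g''(j) < -g'(N+1)$ together with the $\frac{2(b-a)}{a^3}$ trick yields exactly your $\frac{7}{108u^3} + \frac{1}{36u^4}$ with $u = t + \log(N+1) > \log\mu$, and the closing comparison $H_N-\gamma+t < u$ then finishes. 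The paper instead avoids bounding $\theta'$ at all: it starts from the termwise derivative $\beta'(t) = \lim_n\bigl(\tfrac{d}{dt}\tfrac{\li(e^t n)}{e^t} + \sum_{k=N}^{n-1}(H_k-\gamma+t)^{-2}\bigr)$, lower-bounds the sum by $\int_{N+1}^{n+1}\frac{dx}{(t+\log x)^2}$ via $H_k-\gamma < \log(k+1)$, and uses the closed form $\tfrac{d}{dt}\tfrac{\li(e^t n)}{e^t} = -\int_{\alpha e^{-t}}^n\frac{dx}{(t+\log x)^2}$ (a small computation you did not notice, hinging on $\li(\alpha) = \alpha/\log\alpha$), so the two integrals cancel and leave $\beta'(t) \geq \int_{R_t+1}^{\alpha e^{-t}}\frac{dx}{(t+\log x)^2}$ directly. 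That cancellation is the paper's key idea and makes the proof almost computation-free; your version is more laborious but self-contained and transparently parallel to the machinery of Propositions \ref{mainprop1} and \ref{mainprop2}. Parts (1) and (2) you handle the same way as the paper, via Proposition \ref{monotone} and elementary properties of $h(s) = \li(e^s)/e^s$.
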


\begin{proof}
We have already proved statement (1), so we may suppose that $R_t \geq 2$.  For $t \in I = [\log (\mu/N), \log (\mu/(N-1)))$, the function $R_t = N$ is constant.   Therefore, by Proposition \ref{monotone}, the function
$\beta(t) - \frac{\li(e^t R_t)}{e^t} = \theta(t,N)$ is strictly totally monotone on $I$.
Moreover, since $e^t N \leq \mu N/(N-1) \leq 2\mu <  \alpha$ on $I$, the function $\frac{\li(e^t R_t)}{e^t} = \frac{\li(e^t N)}{e^t}$ is strictly increasing on $I$.  Furthermore, the derivative
$ \frac{d}{dt} \frac{\li(e^t N)}{e^t} = \frac{N}{t+\log N}-\frac{\li(e^t N)}{e^t}$
is decreasing on $I$, so that  $\frac{\li(e^t R_t)}{e^t}$ is concave down on $I$.
Again by Proposition \ref{monotone}, one has
\begin{align*}
\beta'(t) &  =   \lim_{n \to \infty} \left(\frac{d}{dt}\frac{\li(e^t n)}{e^t} +\sum_{k = N}^{n-1}\frac{1}{(H_k-\gamma+t)^2}\right) \\
& \geq \lim_{n \to \infty} \left(\frac{d}{dt}\frac{\li(e^t n)}{e^t} +\sum_{k = N}^{n-1}\frac{1}{(t+\log(k+1))^2}\right)  \\
& \geq  \lim_{n \to \infty} \left(\frac{d}{dt}\frac{\li(e^t n)}{e^t} +\int_{N+1}^{n+1}\frac{dx}{(t+\log x)^2}\right)  \\
&=  \lim_{n \to \infty} \left(-\int_{\alpha e^{-t}}^{n}\frac{dx}{(t+\log x)^2}+\int_{N+1}^{n+1}\frac{dx}{(t+\log x)^2}\right)  \\
&=  \lim_{n \to \infty} \left(\int^{\alpha e^{-t}}_{N+1}\frac{dx}{(t+\log x)^2}+\int_{n}^{n+1}\frac{dx}{(t+\log x)^2}\right)  \\
& = \int^{\alpha e^{-t}}_{R_t+1}\frac{dx}{(t+\log x)^2}.
\end{align*}
Moreover, one has $\alpha e^{-t} > R_t+1$ provided that $R_t \geq 3$ since  $t < \log (\mu/(R_t-1)) < \log (\alpha/(R_t+1))$ if $R_t \geq 3$.  Therefore $\beta'(t) \geq \int^{\alpha e^{-t}}_{N+1}\frac{dx}{(t+\log x)^2 }> 0$ if $N \geq 3$.  Finally, if $R_t = 2$, then $\alpha e^{-t} > R_t+1 = 3$ provided that $t < \log (\alpha /3)$, so that $\beta'(t) > 0$ on $[\log (\mu/2), \log(\alpha/3))$.
\end{proof}

Thus, $\beta(t)$  is strictly increasing on $[\log (\mu/N), \log (\mu/(N-1)))$ as long as $N \geq 3$, but the cases $N = 1$ and $N = 2$ are still somewhat of a mystery, since we only know that $\beta(t)$ is strictly decreasing on $[\log \alpha, \infty) \subsetneq [\log \mu, \infty)$ and $\beta(t)$ is strictly increasing on $[\log (\mu/2), \log(\alpha/3)] \subsetneq [\log (\mu/2), \log \mu)$.  The only remaining intervals to examine, then, are $[\log  \mu, \log \alpha]$ and $[\log(\alpha/3), \log \mu)$.

Let us examine the first interval.  Since $\frac{\li(e^x)}{e^x}$ is a reasonable approximation (and lower bound) of $\beta(t)$ on $[\log \mu, \infty)$, one might expect that there exists a constant $c \geq \log \mu$ such that $\beta(t)$ is increasing on $[\log \mu, c]$ and decreasing on $[c, \infty)$.  This expectation is realized if the following two plausible conjectures hold: (1) for all positive integers $n$,  function $\beta_n(t) = \frac{ \li(ne^t)}{e^t}- \sum_{k = 1}^{n-1} \frac{1}{H_k-\gamma+t}$ has a unique local maximum  on $[\log \mu, \infty)$ at some $\rho_n \in (\log \mu,\log \alpha)$,  and (2) the limit
$\rho = \lim_{n  \to \infty} \rho_n$ exists.  (Numerical evidence leads one to suspect further that: (3) the $\rho_n$ are bounded below by $\rho$, and (4) the $\rho_n$ are strictly decreasing as $n \to \infty$, which, together with (1), would imply (2).)   Suppose, for the sake of argument, that conjectures (1) and (2) are true.   Let $\varepsilon > 0$.  Then the $\beta_n(t)$ are decreasing on $[\rho+\varepsilon,\infty)$ for sufficiently large $n$, whence $\beta(t) = \lim_{n \to \infty} \beta_n(t)$ is also decreasing on $[\rho+\varepsilon,\infty)$.   At the same time, the $\beta_n(t)$ are increasing on $[\log \mu, \rho-\varepsilon]$ for sufficiently large $n$, so that $\beta(t)$ is  increasing on $[\log \mu, \rho-\varepsilon]$.  Therefore, if conjectures (1) and (2) are true, then $\beta(t)$ is increasing on $[\log \mu, \rho]$ and decreasing on $[\rho, \infty)$, and therefore $\beta(t)$ attains a local maximum at $t =\rho$.  Table 3 lists approximate values of $\rho_n$ for $n = 1,2,3, \ldots, 10$, where $\beta_n(t)$ attains a unique local maximum at the given values of $t = \rho_n$, and also for $n = 4000$ and $n = 5000$, where $\beta_n(t)$ attains at least one local maximum at $t \approx 1.28202$.  Thus, from the computations in Table 3, it appears that $\rho \approx 1.28202$ exists.   A  separate calculation, shown in Table 4,  shows that indeed  $\beta(t)$ attains at least one local maximum value of approximately $0.77067$ at some $t$ near $1.282$.   More precisely, from the calculations in Table 4 one has
$$\beta(1.274) < 0.770653< 0.770657 < \beta(1.282)$$
and
$$ \beta(1.290) <   0.770653< 0.770657 <  \beta(1.282),$$
and therefore, since $\beta(t)$ is differentiable on $(\log \mu, \infty)$, one has the following.

\begin{table}\label{table6} 
\caption{Local maximum of $\beta_n(t)$ on $[\log \mu, \infty)$ attained at $t = \rho_n$} \bigskip
\footnotesize
\centering 
\begin{tabular}{l|l} 
$n$ & $\rho_n$ \\ \hline
$1$ &  $\log \alpha \approx  1.347155$ \\
$2$ & $\approx 1.29475$  \\
$3$ & $\approx 1.28724$   \\
$4$ & $\approx 1.28489$  \\
$5$ &  $\approx 1.28386$ \\
$6$ & $\approx 1.28331$   \\
$7$ & $\approx 1.28298$  \\
$8$ &  $\approx 1.28277$  \\
$9$ &  $\approx 1.2826260$  \\
$10$ &   $\approx 1.2825221$ \\
$4000$ & $\approx 1.282020$ \\
$5000$ & $\approx 1.282020$
\end{tabular}
\end{table}

\begin{table}\label{table3}
\caption{Upper and lower bounds of $\beta(t)$ computed with $n = 100$, and approximations of $\beta(t)$ with $n = 1000$} \bigskip
\footnotesize
\centering 
\begin{tabular}{l|l|l|l} 
$t$  & $\beta(t) <$   &  $\beta(t) \approx$  &   $\beta(t) >$   \\ \hline 
1.274 & 0.770653  &  0.770651 &  0.770639   \\
1.280  & 0.770670  & 0.770668 & 0.770656 \\
1.281  & 0.770671  &  0.770669  & 0.770657  \\
1.282  & 0.770671  & 0.770669 & 0.770657   \\
1.283  & 0.770671 & 0.770669   & 0.770657  \\
1.284  & 0.770670   & 0.770668  & 0.770656  \\
1.285  & 0.770669   &  0.770667 & 0.770655   \\
1.290  & 0.770653   & 0.770651 &  0.770663  \\
\end{tabular}
\end{table}

\begin{proposition}
The function $\beta(t)$ attains at least one local maximum value at some $t = \rho$ satisfying
$1.274 < \rho < 1.290.$
\end{proposition}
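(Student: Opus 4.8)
The plan is to combine the rigorous two-sided bounds on $\beta(t)$ supplied by Corollary \ref{maincor2} with a compactness argument. First I would note that $[1.274,1.290]$ lies inside $[\log \mu,\infty)$, since $\log \mu = 0.3725\ldots$, and that on $[\log \mu,\infty)$ one has $R_t = 1$, so $\beta(t) = \theta(t,1) + \frac{\li(e^t)}{e^t}$. By Proposition \ref{monotone} the function $\theta(t,1)$ is strictly totally monotone, hence differentiable, on $(0,\infty)$, and $\frac{\li(e^t)}{e^t}$ is differentiable there as well; thus $\beta$ is differentiable, and in particular continuous, on $[1.274,1.290]$.

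Next I would evaluate the enclosures for $\beta(t)$ given by Corollary \ref{maincor2} with $n = 100$ at the three points $t = 1.274$, $t = 1.282$, and $t = 1.290$ (all with $R_t = 1$), i.e.\ compute $\beta_{100}(t) = \frac{\li(100\,e^t)}{e^t} - \sum_{k=1}^{99}\frac{1}{H_k-\gamma+t}$ together with the explicit error terms of that corollary. This is the content of Table \ref{table3}; the outcome I would extract is
$$\beta(1.274) < 0.770653, \qquad \beta(1.290) < 0.770653, \qquad \beta(1.282) > 0.770657.$$
Since $\beta$ is continuous on the compact interval $[1.274,1.290]$, it attains a maximum there at some $\rho$, and $\beta(\rho) \ge \beta(1.282) > 0.770657 > 0.770653 > \max\{\beta(1.274),\beta(1.290)\}$, so $\rho$ cannot be an endpoint; hence $\rho \in (1.274,1.290)$. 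A full neighborhood of such an interior maximizer is contained in $[1.274,1.290]$, so $\beta(\rho)$ is a local maximum value of $\beta$ on its whole domain (and $\beta'(\rho) = 0$ by differentiability). This yields the proposition.

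The main obstacle is the numerical verification in the second step. The gap between the lower bound for $\beta(1.282)$ and the common upper bound for $\beta(1.274)$ and $\beta(1.290)$ is only about $4 \times 10^{-6}$, whereas the width of the $\beta(t)$-enclosure produced by Corollary \ref{maincor2} at $n = 100$ is on the order of $10^{-5}$ near $t \approx 1.28$; so one must compute $\li(100\,e^t)$ and the partial harmonic sums to sufficiently many digits, and be prepared to pass to a larger $n$ (tightening the enclosure like $\frac{1+o(1)}{12n(\log n)^2}$) should the $n = 100$ bounds prove too coarse at these specific values of $t$. The remaining ingredients --- differentiability of $\beta$ on $[\log\mu,\infty)$ and the extreme value theorem --- are routine.
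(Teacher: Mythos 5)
Your proof is correct and follows essentially the same path as the paper's: you read off the rigorous enclosures from the table (computed via Corollary \ref{maincor2} with $n=100$) to get $\beta(1.274) < 0.770653 < 0.770657 < \beta(1.282)$ and likewise $\beta(1.290) < 0.770653$, invoke differentiability (hence continuity) of $\beta$ on $(\log\mu,\infty)$, and conclude by the extreme value theorem that the maximum on $[1.274,1.290]$ is attained at an interior point, which is therefore a local maximum. Your remark about the narrowness of the margin relative to the enclosure width is a sensible sanity check, and the paper's $n=100$ data does indeed just clear it.
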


A similar analysis of $\beta(t)$ on the interval $[\log(\mu/2),\log \mu)$ suggests that $\beta(t)$ is strictly increasing on the entire interval, not just on $[\log(\mu/2), \log(\alpha/3)]$.  Thus we pose the following.

\begin{conjecture}
There exists a constant $\rho \in (\log \mu, \log \alpha)$ ($\rho \approx  1.282$) such that $\beta(t)$ is strictly increasing on $[\log \mu, \rho]$ and strictly decreasing on $[\rho, \infty)$.  Moreover,  $\beta(t)$ is strictly increasing on $[\log(\mu/2), \log \mu)$.
\end{conjecture}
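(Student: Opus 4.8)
We do not have a proof; the following is the natural line of attack together with the point where it stalls. All the action is on $[\log\mu,\infty)$, where $R_t=1$ and $\beta$ coincides with the function $\frac{\li(e^t)}{e^t}+\theta(t,1)$, which is real-analytic on $(0,\infty)$ (real-analyticity: write $\theta(t,1)=\eta(t,1)+\delta(t,1)$ as in Theorem \ref{mainprop}; each is a series of functions holomorphic near the positive reals, converging locally uniformly by Propositions \ref{mainprop1} and \ref{mainprop2}), and where $\beta_n(t)=\frac{\li(ne^t)}{e^t}-\sum_{k=1}^{n-1}\frac1{H_k-\gamma+t}=\frac{\li(e^t)}{e^t}+\theta_n(t,1)$. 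The plan is to reduce the first part of the statement to: (i) each $\beta_n$ is unimodal on $[\log\mu,\infty)$, with a unique critical point $\rho_n$, and $\log\mu<\rho_n\le\log\alpha$ (equality only for $n=1$, as $\beta_1(t)=\frac{\li(e^t)}{e^t}$ peaks at $\log\alpha$); (ii) $\beta'(\log\mu)>0$; (iii) $\rho_n$ is strictly decreasing in $n$. Granting these, the midpoint estimate below shows $\theta_n'(t,1)$ decreases to $\theta'(t,1)$ (using Proposition \ref{monotone} for the convergence), hence $\beta'\le\beta_n'$ pointwise; combined with (ii) and real-analyticity, $\beta'>0$ on some $[\log\mu,\log\mu+\varepsilon_0]$, so $\rho_n\ge\log\mu+\varepsilon_0$ for all $n$, so $\rho:=\lim_n\rho_n$ exists with $\log\mu<\rho<\log\alpha$. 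Since $\beta_n\to\beta$ and $\beta_n'\to\beta'$ locally uniformly and each $\beta_n$ is increasing on $[\log\mu,\rho_n]$ and decreasing on $[\rho_n,\infty)$, one gets (as in the heuristic before Table 3) that $\beta$ is non-decreasing on $[\log\mu,\rho]$ with $\beta'<0$ on $(\rho,\infty)$; a non-decreasing real-analytic function that is not locally constant is strictly increasing, so $\beta$ is strictly increasing on $[\log\mu,\rho]$ and strictly decreasing on $[\rho,\infty)$. The separate claim on $[\log(\mu/2),\log\mu)$, where $R_t=2$, is pursued the same way with $\beta_n(t)=\frac{\li(2e^t)}{e^t}-\sum_{k=2}^{n-1}\frac1{H_k-\gamma+t}$; Proposition \ref{mono}(4) already gives it on $[\log(\mu/2),\log(\alpha/3)]$, leaving only $[\log(\alpha/3),\log\mu)$.

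Ingredients (ii) and (iii) reduce to estimates of the type done in Section 3. For (iii): from $\beta_{n+1}(t)-\beta_n(t)=\int_n^{n+1}\frac{dx}{t+\log x}-\frac1{H_n-\gamma+t}$, differentiating gives $\beta_{n+1}'(t)-\beta_n'(t)=\frac1{(H_n-\gamma+t)^2}-\int_n^{n+1}\frac{dx}{(t+\log x)^2}$; the integrand $x\mapsto(t+\log x)^{-2}$ is convex, so the midpoint rule gives $\int_n^{n+1}\frac{dx}{(t+\log x)^2}\ge(t+\log(n+\tfrac12))^{-2}$, while Eq. (\ref{harmony}) gives $H_n-\gamma>\log(n+\tfrac12)$; hence $\beta_{n+1}'<\beta_n'$ pointwise, and evaluating at $\rho_n$ (using (i) for $n+1$) forces $\rho_{n+1}<\rho_n$. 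The same inequality also shows $\theta_n'(t,1)$ decreases to $\theta'(t,1)$, as used above. For (ii): at $t=\log\mu$ one has $\frac{d}{dt}\frac{\li(e^t)}{e^t}=\frac1{\log\mu}-\frac{\li(\mu)}{\mu}=\frac1{\log\mu}$, so $\beta'(\log\mu)=\frac1{\log\mu}+\theta'(\log\mu,1)$ and (ii) becomes $-\theta'(\log\mu,1)<\frac1{\log\mu}=2.6845\ldots$; the needed upper bound on $-\theta'(\log\mu,1)$ is obtained exactly as the $\theta(t,1)$-bounds in Proposition \ref{mainprop}: apply Eq. (\ref{harmony}) to $\theta_n'(t,1)=-\int_1^n\frac{dx}{(t+\log x)^2}+\sum_{k=1}^{n-1}\frac1{(H_k-\gamma+t)^2}$, compare the sum with $\li$-expressible integrals, and let $n\to\infty$ (numerically $-\theta'(\log\mu,1)\approx1.4$, with ample room). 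A parallel calculation at $t=\log\alpha$, where $\frac{d}{dt}\frac{\li(e^t)}{e^t}=0$, gives $\beta'(\log\alpha)=\theta'(\log\alpha,1)<0$ by the strict total monotonicity of $\theta(\cdot,1)$.

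The real obstacle is (i): that $\beta_n'(t)=\frac{n}{t+\log n}-\frac{\li(ne^t)}{e^t}+\sum_{k=1}^{n-1}\frac1{(H_k-\gamma+t)^2}$ has exactly one zero on $[\log\mu,\infty)$, changing sign from $+$ to $-$ there. Replacing $H_k-\gamma$ by $\log(k+1)$ or $\log(k+\tfrac12)$ via Eq. (\ref{harmony}), and the sums by $\li$-expressible integrals as in the proof of Proposition \ref{monotone}, one can bracket $\beta_n'$ between explicit elementary functions of $(t,n)$, turning (i) into a sign question for such functions. What makes it resistant is that $\beta_n'$ is a sum of a strictly increasing term $\frac{n}{t+\log n}$, a term $-\frac{\li(ne^t)}{e^t}$ whose monotonicity reverses at $t=\log(\alpha/n)$, and a strictly increasing sum, so no one-line argument settles its sign pattern; moreover the second-derivative test is inconclusive, since $\beta_n''$ is itself the difference of an increasing ``$\li$-curvature'' term --- which, by the asymptotic expansion of $\li$, can exceed $n/(t+\log n)^2$ --- and a decreasing ``harmonic-curvature'' sum, and is not visibly sign-definite on $[\log\mu,\log\alpha]$. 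A proof of (i) would therefore seem to require either a decomposition of $\beta_n$ into a manifestly unimodal part plus a controlled-sign error, or a rigorous interval-arithmetic verification on the compact window $[\log\mu,\log\alpha]$ (the tail $k\ge n_0$ being controlled uniformly in $n$ by the $\theta$-type bounds). The residual window $[\log(\alpha/3),\log\mu)$ for the $R_t=2$ case presents the identical difficulty, which is why the statement is recorded here as a conjecture.
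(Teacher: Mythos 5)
The statement you are addressing is posed in the paper as a conjecture, not a theorem: the paper offers no proof, only the same heuristic reduction you describe (unimodality of each $\beta_n(t)$ on $[\log \mu,\infty)$ with critical points $\rho_n$ decreasing to a limit $\rho$) together with numerical evidence, the rigorously established fragments being exactly those in Proposition \ref{mono}. Your write-up is therefore consistent with the paper and takes essentially the same route; your observation that $\beta_{n+1}'(t) < \beta_n'(t)$ (via the midpoint rule and Eq.\ (\ref{harmony})), which would yield the monotonicity of the $\rho_n$ and a lower bound for them once unimodality is known, modestly sharpens the paper's discussion, but the crux you isolate --- that each $\beta_n'$ changes sign exactly once on $[\log \mu,\infty)$ --- is precisely the paper's own unproved conjecture (1), so the statement remains open on both accounts.
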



\begin{thebibliography}{}

\bibitem{alz} H.\ Alzer, On some inqualities for the gamma and psi functions,  Math.\ Comput.\ 66 (217) 373--389.

\bibitem{val1} C.-J.\ de la Vall\'ee Poussin, Recherches analytiques la th\'eorie des nombres premiers,  Ann.\ Soc.\ scient.\ Bruxelles 20 (1896) 183--256.

\bibitem{val2}  C.-J.\ de la Vall\'ee Poussin, Sur la fonction Zeta de Riemann et le nombre des nombres premiers inferieur a une limite donn\'ee, C.\ M\'em.\ Couronn\'es Acad.\ Roy.\ Belgique 59 (1899) 1--74.

\bibitem{ell0}  J.\ Elliott, Asymptotic expansions of the prime counting function, arXiv:1809.06633v4 [math.NT], submitted.


\bibitem{ford} K.\ Ford, Vinogradov's integral and bounds for the Riemann zeta function,  Proc.\ London Math.\ Soc.\ 85 (3) (2002) 565--633.

\bibitem{gron}  T.\ H.\ Gronwall, Some asymptotic expressions in the theory of numbers, Trans.\
Amer.\  Math.\ Soc.\ 14 (1913), 113--122.


\bibitem{had} J.\ Hadamard,  Sur la distribution des z\'eros de la fonction $\zeta(s)$ et ses cons\'equences arithm\'etiques,  Bull.\ Soc.\ math.\ France 24 (1896) 199--220.


\bibitem{koch}  H.\ von Koch, Sur la distribution des nombres premiers, Acta Mathematica
24 (1901) 159--182.

\bibitem{lag}  J.\ C.\ Lagarias, An elementary problem equivalent to the Riemann hypothesis,  Amer.\ Math.\ Monthly 109 (6) (2002) 534--543.

\bibitem{mert} F.\ Mertens, Ein Beitrag zur analytischen Zahlentheorie, J.\ reine angew.\ Math.\ 78 (1874) 46--62.

\bibitem{mv}  H.\ L.\ Montgomery and R.\ C.\ Vaughan, The large sieve, Mathematika 20, Part 2, 40 (1973) 119--134.


\bibitem{mont}  H.\ L.\ Montgomery and R.\ C.\ Vaughan, {\it Multiplicative Number Theory I.  Classical Theory}, Cambridge Studies in Advanced Mathematics, Vol.\ 97, Cambridge University Press, 2007.



\bibitem{rob} G.\ Robin, Grandes valuers de la fonction somme des diviseurs et hypoth\'ese de Riemann, J.\ Math.\ Pures Appl.\ 63 (1984) 187--213.

\bibitem{sch}  L.\ Schoenfeld, Sharper bounds for the Chebyshev functions $\theta(x)$ and $\psi(x)$.  II., Math.\ Comput.\ 30 (134) 337--360.

\end{thebibliography}
\end{document}